\numberwithin{equation}{section}
\newtheorem{theorem}[equation]{Theorem}
\newtheorem{lemma}[equation]{Lemma}
\newtheorem{remark}[equation]{Remark}
\newtheorem{corollary}[equation]{Corollary}
\newtheorem{proposition}[equation]{Proposition}
\theoremstyle{definition}
\newtheorem{definition}[equation]{Definition}
\newtheorem{example}[equation]{Example}
\theoremstyle{remark}
\newtheorem{sideremark}[equation]{Side-remark}
\newcommand{\QQ}{\mathbb Q}
\newcommand{\CC}{\mathbb C}
\newcommand{\ZZ}{\mathbb Z}
\newcommand{\NN}{\mathbb N}
\newcommand{\rk}{\mathop{\mathrm{rk}}}
\newcommand{\supp}{\mathop{\mathrm{supp}}}
\newcommand{\codim}{\mathop{\mathrm{codim}}\nolimits}
\newcommand{\pt}{\mathrm{pt}}
\newcommand{\CH}{\mathop{\mathrm{CH}}}
\newcommand{\kpar}{}
\newcommand{\kcomment}[1]{#1}
\newcommand{\kdel}[1]{}
\newcommand{\weylmax}{w_0}
\begin{document}
\title{Multiplicity-free products of Schubert divisors}
\author{Rostislav Devyatov\thanks{Laboratory of Algebraic Geometry and its Applications,
Department of Mathematics,
National Research University Higher School of Economics,
6 Usacheva str.,
Moscow 119048,
Russian Federation.\newline\textit{Email address:} \texttt{deviatov@mccme.ru} .}}
\maketitle

\begin{abstract}
Let $G/B$ be a flag variety over an arbitrary field, 
where $G$ is a semisimple split algebraic group with a simply laced Dynkin diagram, and $B$ is a Borel subgroup.
We say that the product of several classes of
Schubert divisors in the Chow ring
is \emph{multiplicity-free}
if it is possible to multiply it by a Schubert class (not necessarily of a divisor)
and get the class of a point. 
In the present paper we find 
all possible degrees (in the Chow ring) of 
multiplicity-free products of classes of Schubert divisors.

Also, given a product of several classes of Schubert divisors, we can decompose it into 
a linear combination of classes of Schubert varieties with (as was known before) nonnegative coefficients.
We study the coefficients in this linear combination and provide a criterion detecting 
if such a coefficient equals 1, is greater than 1, or equals zero (i.e. a Schubert variety is not actually present in the linear combination).
\end{abstract}

\section{Introduction}
Let 
$G$ be a semisimple split 
algebraic group over an arbitrary field.
Consider the \emph{generalized flag variety} $G/B$, 
where $B\subseteq G$ is a Borel subgroup.
We are going to study the Chow ring of 
$G/B$ (or, equivalently, the cohomology group of $G/B$ if the base field is $\CC$) and its multiplicative structure.
The multiplicative structure of $\CH^*(G/B)$ was studied, for example, in \cite{chevalley} and \cite{demazure}.
As a $\ZZ$-algebra, $\CH^*(G/B)$ is ``almost generated'' by the classes of \emph{Schubert divisors} (defined precisely below) in $G/B$.
Denote the classes of Schubert divisors by $D_1,\ldots, D_r$, where $r=\rk G$.
To be precise, ``almost generated'' above means that every element of $\CH^*(G/B)$ becomes a polynomial in $D_i$s after multiplication
by an appropriate integer\footnote{By \cite[\S 4, Theorem 2]{grothendiecktorsion}, this integer equals the torsion index of the simply connected cover of $G$. 
The last previously unknown torsion indices of simply connected split simple algebraic groups were found
in \cite{TotaroE} and \cite[Theorem 0.1]{TotaroSpin}.
A survey on the prior results on torsion indices is also present in these two papers.}, 
or, in other words, $\CH^*(G/B) \otimes_{\ZZ} \QQ$ is generated by $D_i$s as a $\QQ$-algebra.
The ideal of relations between $D_i$s was computed in 
\cite{borel} (see also \cite[Propostion 1.3]{bgg}).

We will be particularly interested in monomials in $D_i$s.
Let us say that a monomial $D_1^{r_1}\ldots D_r^{n_r}$ is \emph{multiplicity free}
if there exists 
$X \in \CH^*(G/B)$
such that 
$D_1^{r_1}\ldots D_r^{n_r}X=[\pt]$.
Our goal is, for all groups $G$ \emph{with simply-laced Dynkin diagram} (i.e. of type $A$, $D$, or $E$ or a product of such groups), 
to 
find
\textit{all possible degrees (in the Chow ring) of multiplicity-free monomials in $D_1,\ldots,D_r$}.
In other words, 
we find
\emph{all possible values of the sum $n_1+\ldots+n_r$ for the $n$-tuples $n_1,\ldots,n_r$
of nonnegative integers such that $D_1^{n_1}\ldots D_r^{n_r}$ is a multiplicity-free monomial}.
This question is particularly interesting in the case when $G$ is of type $E$, 
because the answer may be used to compute upper bounds on the 
\emph{canonical dimension} (for the definition, see \cite{merkurjevupd3}) of 
the simply connected covering of $G$ (or of $G$ itself if it was already simply connected)
over non-algebraically closed fields, 
similarly to the arguments of 
\cite{karpenko}.

These possible degrees
for types $E_6$, $E_7$, and $E_8$ 
turn out to be all integers between 0 and
(respectively) 19, 26, 34, see Theorem \ref{answertypee}. 
Also, for a group of type $A_r$, i.e. for the ``classical'' variety of complete flags, 
we will get all integers between 0 and $r(r+1)/2 = \dim (G/B)$, see Lemma \ref{answertypea}.
This agrees with the fact that the torsion index of $SL_{r+1}$ is 1.
And for a group of type $D_r$ ($r\ge 4$), we will get all integers between 0 and $r(r+1)/2-1$, see Proposition \ref{answertyped}, 

To give some more motivation to this question and to 
explain how we are going to solve it, let us introduce some more notation and terminology.
First, let us fix a (split) maximal torus in $B$. We will not need the torus explicitly, but it further canonically defines 
the Weyl group, which we denote by $W$.
Then, in $G/B$, one can associate a so-called \emph{Schubert subvariety} to any 
$w\in W$.
There are many different ways to establish such a correspondence, we choose the following one:
for each $w\in W$, denote $Z^w=[\overline{B\weylmax w^{-1}B/B}]$, where $\weylmax$ is the longest element of the Weyl group.
Then $\codim Z^w=\ell(w)$, 
where $\ell(w)$ is the length of an element $w\in W$. 
In other words, $Z^w \in \CH^{\ell(w)}(G/B)$.
It is known that the classes of $Z^w$ in Chow ring for all $w\in W$ 
freely generate $\CH^*(G/B)$ as an abelian group.
The highest possible, 
the $\dim(G/B)$th, degree 
of the Chow group is $\ZZ$-generated by $Z^{\weylmax}=[\pt]$.
Also, we can say precisely now that 
$D_i = Z^{\sigma_i}$, where $\sigma_i \in W$ is the $i$th simple reflection
(the simple roots are enumerated as in \cite{bou}).

So, we have two ways to generate or ``almost generate'' $\CH^*(G/B)$: with Schubert divisors or with all Schubert varieties. 
The advantage of all Schubert varieties is that they really generate $\CH^*(G/B)$ without any division (and without any relations).
The disadvantage is that so far we only see the additive structure this way, and the description of 
multiplication in terms of $Z^w$s is quite complicated, not fully known, and probably hard to explain, because there are quite many elements in $W$.
Anyway, it would be nice to understand connections between these two descriptions.
Since $Z^w$s freely generate $\CH^*(G/B)$ as an abelian group,
every monomial in $D_i$s equals a linear combination of 
$Z^w$s:
$$
D_1^{n_1}D_2^{n_2}\dots D_r^{n_r}=\sum c_{w,n_1,\ldots,n_r}Z^w.
$$
It would be nice to understand the coefficients in this linear combination.

It is known (see, for example, \cite[Proposition 1.3.6]{brionarxiv}) that all 
coefficients $c_{w,n_1,\ldots,n_r}$ are nonnegative. 
So, it makes sense to ask: given $w, n_1, \ldots, n_r$, do we have
$c_{w,n_1,\ldots,n_r}=0$, $c_{w,n_1,\ldots,n_r}=1$, or $c_{w,n_1,\ldots,n_r} > 1$?
As we will see in the next paragraph, this question is directly related to the 
multiplicity-free monomials in Schubert divisors introduced above (and provides further motivation to study them).

To explain this connection, we need a theorem about multiplication of Schubert varieties of complimentary 
dimensions from \cite{chevalley}.
%
Using the notation $Z^w$ we introduced above, this result can be formulated as follows.
\begin{theorem}[discussion after {\cite[Proposition 9]{chevalley}}]
Let $w,w'\in W$ be such that $\ell(w)+ \ell(w')=\ell(\weylmax)$. 
Then $Z^wZ^{w'}=[\pt]$ if and only if $w=w'\weylmax$, and $Z^wZ^{w'}=0$ otherwise\footnote{In a bit more details, 
in \cite{chevalley}, Schubert varieties are indexed with the Weyl group differently
(see \cite[Introduction]{chevalley}): $X(u)=[\overline{BuB/B}]$ for $u \in W$. 
This notation is related to related our notation $Z^w$ as follows: 
$Z^w=X(\weylmax w^{-1})$, $X(u)=Z^{u^{-1}\weylmax}$.}.
\end{theorem}
\noindent Now,
it follows directly from this theorem that 
a monomial $D_1^{n_1} \ldots D_r^{n_r}$ is multiplicity-free (resp.\ $D_1^{n_1} \ldots D_r^{n_r} Z^w = [\pt]$)
if and only if there exists $w'\in W$ such that $c_{w',n_1,\ldots,n_r}=1$
(resp.\ $c_{w \weylmax,n_1,\ldots,n_r}=1$).

Now we are ready to say what we are actually going to do 
to find 
all possible degrees of 
multiplicity-free products of Schubert divisors.
For a group $G$ with simply-laced Dynkin diagram, given $w \in W$ and $n_1, \ldots, n_r \in \ZZ_{\ge 0}$, 
we are going to understand whether $c_{w, n_1, \ldots, n_r}=0$, $c_{w, n_1, \ldots, n_r}=1$, 
or $c_{w, n_1, \ldots, n_r}>1$ (see Proposition \ref{sortabilitycriterium} an Theorem \ref{MainTheorem}). 
In fact, the detection of whether $c_{w, n_1, \ldots, n_r}=0$ or $C_{w, n_1, \ldots, n_r}>0$ works even if the Dynkin diagram is not simply-laced
(Proposition \ref{sortabilitycriterium} does not depend on the type of $G$)
and may be of independent interest, although it follows quite easily from previously known results.
The criterion for $c_{w, n_1, \ldots, n_r}=1$ will then be easy enough to find 
the values of $\ell(w)$ for all tuples 
$w, n_1, \ldots, n_r$ such that $c_{w, n_1, \ldots, n_r}=1$.

These results were preliminarily announced in a short note \cite{multiplicityfreeshortnote} by the author.

It also seems natural to ask when, for given numbers $n_1,\ldots,n_r$, all
coefficients $c_{w,n_1,\ldots,n_r}$ for all $w\in W$ equal either 0 or 1.
But this happens quite rarely, and we are not trying to answer this question here. 
We will return to this question in a subsequent paper.

After (the first version of) this text was posted on arXiv, the examples of multiplicity-free monomials in Section \ref{sectionnumerical}
were generalized to groups of type $B$, $C$, $F$, and $G$ in \cite[Example 2.10 and Example 2.12]{zaynullinbcfg}.
Therefore, lower bounds on the maximal possible degree of a multiplicity-free monomial for groups of these types were obtained.
The question of the exact value of 
the maximal possible degree of a multiplicity-free monomial for groups of type $B$, $C$, $F$, and $G$ remains open.

\subsection*{Acknowledgments} I thank Kirill Zaynoulline for bringing my attention to this problem and 
Valentina Kiritchenko for numerous useful discussions.

\subsection*{Funding} The author thanks University of Ottawa for hospitality and support.
The study has been funded within the framework of the HSE University Basic Research Program (HSE-BR-2025-061).

\section{Notation and Preliminaries}
\subsection{Root systems}
We keep all notation and assumptions we have set in the Introduction.
Additionally, 
starting from Section \ref{sectionuniqsortnes}, we will always assume that the group $G$ has simply laced Dynkin diagram (i.e. group of type $A$, $D$, or $E$).
We denote
the root system
(resp.\ the subset of positive, negative, simple roots) by $\Phi$
(resp.\ $\Phi^+$, $\Phi^-$, $\Pi=\{\alpha_1, \ldots, \alpha_r\}$).
We enumerate the simple roots as in \cite{bou}.
We will also often talk about the coordinates with respect to the basis of simple roots, 
so we denote the function computing the $i$th coordinate by $\alpha_i^*$.
We denote the neutral element of $W$ by $1_W$.

Denote the reflection corresponding to a (not necessarily simple) root $\alpha \in \Phi$ by $\sigma_{\alpha}$.
(This means that a simple reflection is denoted by $\sigma_{\alpha_i}$, but we will use the notation $\sigma_i$ from the Introduction as well.)
The (invariant) scalar product of 
two roots $\alpha$ and $\beta$ is denoted by $(\alpha,\beta)$. 
Whenever
the Dynkin diagram is simply laced, all roots in $\Phi$ have equal lengths.
In this case, let us choose 
the bilinear form
on $\Phi$ so that the scalar square of each simple root is 2. 
Note that with this choice of 
the bilinear form,
we can use a simple formula for reflection.
Usually, we write
$$
\sigma_\alpha \beta = \beta-\frac{2(\beta, \alpha)}{(\alpha,\alpha)}\alpha.
$$
But with our choice of 
the bilinear form,
we can write
$$
\sigma_\alpha \beta = \beta-(\beta,\alpha)\alpha.
$$

We denote the Cartan bracket by $\langle \cdot, \cdot \rangle$: 
for two arbitrary vectors $v, w$ from the ambient space of $\Phi$, $w\ne 0$, we have
$$
\langle v,w \rangle
=
\frac{2(v,w)}{(w,w)}
$$
Also, denote the fundamental weight corresponding to a simple root $\alpha_i$ by $\varpi_i$.
If 
all roots in $\Phi$ have equal lengths, we also have $\alpha_i^*(\alpha) = \langle \varpi_i, \alpha \rangle$
for all $\alpha \in \Phi$, $\alpha_i \in \Pi$.

For each $\alpha \in \Phi$ (or even for any vector $\alpha$ from the root lattice), we say that its \emph{support} is 
$\supp \alpha = \{\alpha_i \in \Pi \mid \alpha_i^*(\alpha) \ne 0\} = \{ \alpha_i \in \Pi \mid \langle \varpi_i, \alpha \rangle \ne 0 \}$
(the last equality of two sets actually holds for all types of root systems, not necessarily $A$, $D$, or $E$).
At some point, we will also need to consider the subgroups of $W$ generated by reflections corresponding to some (but not all) roots.
For each subset $I \subseteq \Pi$, denote $\Phi_I = \{ \alpha \in \Phi \mid \supp \alpha \subseteq \Pi \}$.
Then, denote by $W_I$ the subgroup of $W$ generated by the reflections $\sigma_\alpha$ with $\alpha \in \Phi_I$
(equivalently, $W_I$ is generated by the \emph{simple} reflections corresponding to the simple roots from $I$, 
but we will not need this description).


We will also need to consider 
different partial orders on 
the root lattice.
First, there is the standard order $\prec$ on 
the root lattice:
we say that 
$\alpha\prec \beta$ if $\beta-\alpha$ is a sum of positive roots. Additionally, for each $w\in W$ we will need an order we will denote by $\prec_w$:
we say that $\alpha \prec_w \beta$ if $w^{-1}\alpha \prec w^{-1}\beta$.

To work with the Weyl group and its elements $w \in W$, we will extensively use the set $\Phi^+\cap w\Phi^-$.
Let us mention the following 
properties
of this set right away.
\begin{lemma}[{follows from \cite[Lemma 2.2]{bgg}}]\label{phipmlength}
If $w\in W$, then $\ell(w)=|\Phi^+\cap w\Phi^-|$. 
\end{lemma}
\begin{lemma}\label{phiwphidetermines}
The set $\Phi^+ \cap w \Phi^-$ determines $w$ uniquely. 
In other words, if $v,w \in W$ are such that 
$\Phi^+ \cap v \Phi^- = \Phi^+ \cap w \Phi^-$, then $v=w$.
\end{lemma}
\begin{proof}
Let us check that $\Phi^+\cap v^{-1} w\Phi^- = \varnothing$. 
Assume the contrary: let $\alpha \in \Phi^+\cap v^{-1} w\Phi^-$. Then $v \alpha \in w \Phi^-$ and $w^{-1}v \alpha \in \Phi^-$.

If $v \alpha \in \Phi^+$, then $v \alpha \in \Phi^+ \cap w \Phi^- = \Phi^+ \cap v \Phi^-$, and $v^{-1} v \alpha \in \Phi^-$.
But $v^{-1}v \alpha = \alpha \in \Phi^+$, a contradiction.

If $v \alpha \in \Phi^-$, then $-v\alpha \in \Phi^+ \cap v \Phi^- = \Phi^+ \cap w \Phi^-$, so $-w^{-1}v\alpha \in \Phi^-$ 
and $w^{-1}v\alpha \in \Phi^+$. 
But we know that $w^{-1}v \alpha \in \Phi^-$, a contradiction.

So, $\Phi^+\cap v^{-1} w\Phi^- = \varnothing$. By Lemma \ref{phipmlength}, $\ell(v^{-1} w)=0$, and $v^{-1}w=1_W$.
Hence, $w=v$.
\end{proof}

Also, for root systems of types $A$, $D$, or $E$, we will often need the following well-known 
correspondence between scalar products 
and the possibility to add or subtract roots. 
We will sometimes use these facts implicitly, without quoting this lemma.
\begin{lemma}\label{sumexists}
Let $\alpha,\beta\in \Phi$. Then all possible values of $(\alpha,\beta)$ are $-2$, $-1$, 0, 1, and 2.
Moreover:
\begin{enumerate}
\item $\alpha=-\beta$ if and only if $(\alpha,\beta)=-2$.
\item $\alpha+\beta\in \Phi$ if and only if $(\alpha,\beta)=-1$.
\item $\alpha-\beta\in \Phi$ if and only if $(\alpha,\beta)=1$.
In this case, $\alpha$ and $\beta$ are comparable for $\prec$ and for 
the orders $\prec_w$ for all $w\in W$.
\item $\alpha=\beta$ if and only if $(\alpha,\beta)=2$.
\end{enumerate}
\end{lemma}

We will have some examples for $G=SL_{r+1}$. 
In these examples, we will use the standard construction for the root system $\Phi$ of type $A_r$: 
we start with a $\QQ$-vector space $\tilde{E}$ with orthonormal basis $\varepsilon_1, \ldots \varepsilon_{r+1}$
and set $\Phi=\{\varepsilon_i-\varepsilon_j \mid i \ne j\}$, $\alpha_i = \varepsilon_i-\varepsilon_{i+1}$.
The weight lattice is then more conveniently viewed as embedded into the quotient $E=\tilde{E}/\langle \varepsilon_1+\ldots +\varepsilon_{r+1}\rangle$
(rather than embedded into the subspace of $\tilde{E}$ where the sum of coordinates is zero): 
then we can simply say $\varpi_i = \varepsilon_1+\ldots+\varepsilon_i$.

The Weyl group in this case is the permutation group $S_{r+1}$.
For brevity, we will not use the standard two-row notation for elements of $S_{r+1}$.
Instead, we will denote each permutation $w \in S_{r+1}$ by the result of $w$ applied to the vector $(1,2,\ldots,r+1) \in E$.
(In fact, this vector can be viewed as the weight $-\rho=-\varpi_1-\ldots-\varpi_r=-\sum_{\alpha \in \Phi^+} \alpha/2$.)
Formally, if $\{s_1,\ldots,s_{r+1}\}=\{1, \ldots, r+1\}$, then 
we denote the permutation $w \in S_{r+1}$
such that $w(1,2,\ldots,r+1)=(s_1,\ldots,s_{r+1})$
by $[s_1,\ldots,s_{r+1}]$.
The transposition interchanging the $i$th and the $j$th positions 
(formally: the $i$th and the $j$th coordinates of vectors in $E$) will be denoted by $(i\leftrightarrow j)$.
It also equals $\sigma_{\varepsilon_i-\varepsilon_j}$.


\begin{example}
The length of an element $[s_1,\ldots,s_{r+1}]\in W$ is the number of inversions, i.e. the number of pairs $(i,j)$ with $i<j$ and $s_i>s_j$.
And the set $\Phi^+ \cap w \Phi^-$ consists of all roots $\varepsilon_j - \varepsilon_i = \alpha_i + \ldots + \alpha_{j-1}$ for all 
$i$ and $j$ satisfying the same conditions: $i < j$ and $s_i > s_j$.
\end{example}

\subsection{Monomials in Schubert divisors}
To simplify the notation for monomials in Schubert divisors, we will use a lattice $\ZZ^r$ (more precisely, its orthant $\ZZ^r_{\ge 0}$) for exponents:
if $n_\bullet = (n_1, \ldots, n_r) \in \ZZ^r_{\ge 0}$, we write $D_{\bullet}^{n_{\bullet}} = D_1^{n_1} \ldots D_r^{n_r}$. 
We will also use this notation for the coefficients $c_{w,n_1, \ldots, n_r}$: we write them as $c_{w,n_\bullet}$, 
i.e. $D_\bullet^{n_\bullet} = \sum_{w \in W} c_{w,n_\bullet} Z_w$.
Also, we denote $|n_\bullet| = n_1 + \ldots + n_r$ (then $D_\bullet^{n_\bullet} \in \CH^{|n_\bullet|}(G/B)$)
and we define the \emph{support} of $n_\bullet$ as $\supp n_\bullet = \{\alpha_i \in \Pi \mid n_i > 0\}$.

We will often need to remove some factors from a monomial in Schubert divisors, in other words, to replace some of the exponents $n_i$ with zeros.
To simplify the notation for that, we denote the standard basis of $\ZZ^r$ by $e_1, \ldots, e_r$ (so, $D_\bullet^{e_i} = D_i$)
and say that this basis is orthonormal. Then, for any $I \subseteq \Pi$, denote by $p_I$ the orthogonal projection 
$\ZZ^r \to \langle e_i \mid \alpha_i \in I \rangle$. In other words, $D_\bullet^{p_I(n_\bullet)} = \prod_{i \colon \alpha_i \in I} D_i^{n_i}$.

The following Chevalley-Pieri formula proved in \cite{demazure}\footnote{Same remark as in the Introduction: in \cite{demazure}, 
a different correspondence between $W$ and Schubert varieties is used: $X_u = [\overline{BuB/B}]$ (like in \cite{chevalley})
instead of 
$Z^w=X_{\weylmax w^{-1}}$.
} 
will be our basic tool to work with multiplication in $\CH^*(G/B)$.
\begin{proposition}[{\cite[\S 4.4, Corollary 2]{demazure}}]\label{pieriformula}
Let $G$ be a semisimple split simply connected group.
Let $\alpha_i\in \Pi$,
and let $w\in W$. Then
$$
D_iZ^w=\sum_{\substack{\alpha\in \Phi^+\\\ell(\sigma_{\alpha}w)=\ell(w)+1}}\langle \varpi_i,\alpha\rangle Z^{\sigma_{\alpha}w}.
$$
\end{proposition}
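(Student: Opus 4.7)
My plan is to obtain the formula as the specialization $\lambda=\varpi_i$ of the \emph{Chevalley formula}
\[
c_1(L_\lambda)\cdot Z_w = \sum_{\substack{\alpha\in\Phi^+\\ \ell(\sigma_\alpha w) = \ell(w)+1}} \langle\lambda,\alpha\rangle\, Z_{\sigma_\alpha w},
\]
valid for every integral weight $\lambda$ and every $w\in W$, combined with the identification $c_1(L_{\varpi_i})=D_i$ of the first Chern class of the line bundle attached to the fundamental weight with the corresponding Schubert divisor class. This identification follows by taking $\lambda=\varpi_i$ in the general degree-one expansion $c_1(L_\lambda)=\sum_j\langle\lambda,\alpha_j\rangle D_j$ and applying the orthogonality $\langle\varpi_i,\alpha_j\rangle=\delta_{ij}$, which is itself a consequence of the normalization of the scalar product chosen in the preliminaries.

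To prove the Chevalley formula, I would introduce the BGG--Demazure divided-difference operators $\partial_i\colon A^*(G/B)\to A^{*-1}(G/B)$ associated with the $\mathbb{P}^1$-bundle projection $G/B\to G/P_i$ to the minimal parabolic quotient. The two facts I would need are: \emph{(i)} $\partial_i(Z_w)=Z_{\sigma_{\alpha_i}w}$ when $\ell(\sigma_{\alpha_i}w)=\ell(w)-1$ and $\partial_i(Z_w)=0$ otherwise; and \emph{(ii)} the twisted Leibniz identity
\[
\partial_i\bigl(c_1(L_\lambda)\cdot x\bigr) = c_1(L_{\sigma_{\alpha_i}\lambda})\cdot\partial_i(x) + \langle\lambda,\alpha_i\rangle\,x.
\]
With these in hand one proves the Chevalley formula by induction on $\ell(w)$. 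The base case $w=e$ reduces to the expansion of $c_1(L_\lambda)$ recalled above. For the inductive step, given $w'$ with $\ell(w')\ge 1$, choose a simple reflection $\sigma_{\alpha_i}$ decreasing its length and set $w=\sigma_{\alpha_i}w'$ with $\ell(w)=\ell(w')-1$; applying $\partial_i$ to the identity sought for $w'$, together with (ii) and (i), converts the left-hand side into an expression involving $c_1(L_{\sigma_{\alpha_i}\lambda})\cdot Z_w$ (known by induction) plus lower-order terms, which can be reorganized into the desired right-hand side for $w'$.

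The main obstacle is the combinatorial bookkeeping in the inductive step. Using the standard length criterion $\ell(\sigma_\alpha w)=\ell(w)+1 \Leftrightarrow w^{-1}(\alpha)\in\Phi^+$, one must check that the set of positive roots $\alpha$ contributing to the expansion at $w$ corresponds, under the bijection $\alpha\mapsto\sigma_{\alpha_i}\alpha$ of $\Phi^+\setminus\{\alpha_i\}$ with itself, to the set contributing at $w'=\sigma_{\alpha_i}w$, and that the coefficients $\langle\sigma_{\alpha_i}\lambda,\alpha\rangle$ and $\langle\lambda,\sigma_{\alpha_i}\alpha\rangle$ match so as to reproduce the claimed formula together with the correction term from (ii). This matching is straightforward but slightly delicate; the full argument is carried out in \cite[\S 4.4]{demazure}, from which Proposition~\ref{pieriformula} is quoted.
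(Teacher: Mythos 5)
Your proposal is a correct strategy for establishing the Chevalley--Pieri formula, but it takes a genuinely different route from the paper. The paper does not re-derive the formula at all: it simply quotes Demazure's \S 4.4, Corollary 2, which is stated in the $X_w = [\overline{BwB/B}]$ notation as
\[
X_{\weylmax\sigma_{\alpha_i}}X_{w'}=\sum_{\substack{\alpha\in \Phi^+\\\ell(w'\sigma_{\alpha})=\ell(w')-1}}\langle \varpi_i, \alpha\rangle X_{w'\sigma_{\alpha}},
\]
and then performs a purely combinatorial change of notation: substituting $w' = \weylmax w^{-1}$, using $\ell(\weylmax w'') = \ell(\weylmax) - \ell(w'')$, $\sigma_\alpha = \sigma_\alpha^{-1}$, and $\ell(w''^{-1}) = \ell(w'')$ to convert the summation condition $\ell(w'\sigma_\alpha) = \ell(w')-1$ into $\ell(\sigma_\alpha w) = \ell(w)+1$, and finally passing to the paper's convention $Z_w = X_{\weylmax w^{-1}}$ with $Z_{\sigma_{\alpha_i}} = D_i$. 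That translation buys a very short proof at the cost of depending entirely on the cited corollary. Your approach --- identifying $D_i$ with $c_1(L_{\varpi_i})$, stating the general Chevalley formula, and proving it by induction via the BGG--Demazure operators $\partial_i$ and the twisted Leibniz rule --- is self-contained and clarifies why the coefficient $\langle\varpi_i,\alpha\rangle$ appears, but it requires the additional facts (i) and (ii) and the bookkeeping you flag. One caveat: your statements of (i) and (ii) are written for the ``wrong-way'' Schubert indexing $Z_w$ used here, where $\codim Z_w = \ell(w)$ and $Z_w = X_{\weylmax w^{-1}}$; the precise side (left vs.\ right multiplication by $\sigma_{\alpha_i}$) in (i), and the role of $\sigma_{\alpha_i}\lambda$ in (ii), would need to be checked against this convention before the induction closes cleanly --- exactly the kind of index-translation the paper handles explicitly and you would also have to carry out.
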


In view of this formula, let us recall the definition of \emph{Bruhat graph}. 
Its vertices are (in one-to-one correspondence with) $W$, and we have an edge $w \to w'$ with label $\alpha \in \Phi^+$
if and only if $w' = \sigma_\alpha w$ and $\ell(w')=\ell(w)+1$.
Following \cite{bgg}, we will often simply write ``$w \xrightarrow{\alpha} w'$'' instead of 
``$w \xrightarrow{\alpha} w'$ is an edge in the Bruhat graph''.
Then the formula in Proposition \ref{pieriformula} can be rewritten as follows.
\begin{equation}\label{pieriformulabruhat}
D_iZ^w=\sum_{\alpha\in \Phi^+ : w \xrightarrow{\alpha} \sigma_{\alpha}w}\langle \varpi_i,\alpha\rangle Z^{\sigma_{\alpha}w}.
\end{equation}

\begin{sideremark}
Bruhat graph can also be used to detect adjunction between Schubert varieties (see \cite{chevalley}, \cite[Theorem 2.11]{bgg}), 
although we will not need this in what follows.
Let $w, w' \in W$, $X_w = \overline{BwB/B}$, $X_{w'} = \overline{Bw'B/B}$.
Then there is an edge $w \to w'$ in the Bruhat graph if and only if $X_w \subset X_{w'}$ and $\dim X_{w'} = \dim X_w + 1$.
And $X_w \subseteq X_{w'}$ (without restrictions on dimensions) if and only if there is an oriented path (of arbitrary length) from $w$ to $w'$.
\end{sideremark}


\begin{example}
The Bruhat graph for $G=SL_3$ looks as follows.
$$
\xymatrix{
& [3,2,1] & \\
[3,1,2]\ar[ur]^{\alpha_2} && [2,3,1]\ar[ul]_{\alpha_1} \\
[1,3,2]\ar[urr]|(0.33){\alpha_1+\alpha_2} \ar[u]^{\alpha_1} && [2,1,3]\ar[ull]|(0.33){\alpha_1+\alpha_2} \ar[u]_{\alpha_2} \\
& [1,2,3] \ar[ur]_{\alpha_1} \ar[ul]^{\alpha_2}
}
$$
\end{example}


%
%
%
%

\section{Criterion for $c_{w,n_\bullet}=0$}\label{sectioncriterionzero}
In this section, we are going to establish a criterion for $c_{w,n_\bullet}=0$ in terms of the set $\Phi^+ \cap w \Phi^-$.
In this section, the type of group $G$ is still arbitrary, not necessarily $A$, $D$, or $E$.
We start with 
a few
corollaries of Chevalley-Pieri formula and a remark about it.
\begin{corollary}\label{pieriformulacwn}
Let $w \in W$ and $n_\bullet \in \ZZ_{\ge 0}^r$ be such that $\ell(w)=|n_\bullet|$. 
For $w = 1_W$, we have $c_{1_W, 0}=1$.
If $\ell(w) > 0$ and 
$\alpha_i \in \supp n_\bullet$, then
\begin{equation}\label{pieriformulacwnformula}
c_{w, n_\bullet}=\sum_{\beta \in \Phi^+ : \sigma_{\beta}w \xrightarrow{\beta} w} c_{\sigma_{\beta} w, n_\bullet - e_i} \langle \varpi_i,\beta \rangle.
\end{equation}
\end{corollary}
\begin{proof}
We write 
$D_{\bullet}^{n_\bullet - e_i} = \sum_{v \in W} c_{v,n_\bullet - e_i} Z^v$,
multiply both sides by $D_i$, and apply (\ref{pieriformulabruhat}) to the right-hand side.
\end{proof}
\begin{remark}\label{shortenpath}
It follows directly from the non-negativity of the coefficients $\langle \varpi_i,\alpha \rangle$
in (\ref{pieriformulacwnformula})
that if $D_{\bullet}^{n_\bullet}$ 
is \emph{not} a multiplicity-free monomial for some $n_\bullet \in \ZZ_{\ge 0}$, then 
$D_i D_{\bullet}^{n_\bullet}$ is not a multiplicity-free monomial either (for $1 \le i \le r$).
If $d$ is the maximal possible degree of a multiplicity-free monomial (for a given group $G$), 
then all possible degrees of multiplicity-free monomials are exactly all integers between 0 and $d$, inclusively.
\end{remark}
\begin{corollary}\label{pieriformulacwnmultiple}
Let $w \in W$ and $m_\bullet, n_\bullet \in \ZZ_{\ge 0}^r$ be such that $\ell(w)=|m_\bullet + n_\bullet|$.
Denote $l=|n_\bullet|$, and let $\alpha_{i_1},\ldots,\alpha_{i_l}$ be a sequence of simple roots 
where each $\alpha_i$ occurs exactly $n_i$ times in total.
Then
\begin{equation}\label{pieriformulacwnmultiplerel}
c_{w,m_\bullet+n_\bullet} = 
\sum_{v \xrightarrow{\beta_1} \ldots \xrightarrow{\beta_l} w} 
\langle \varpi_{i_l}, \beta_l \rangle \ldots \langle \varpi_{i_1}, \beta_1 \rangle c_{v,m_\bullet},
\end{equation}
where the sum is taken over all paths in the Bruhat graph ending at $w$ (without any further restrictions imposed on the beginning $v$ of the path).

In particular, if we take $m_\bullet=0$, we get 
\begin{equation*}
c_{w,n_\bullet} = 
\sum_{1_W \xrightarrow{\beta_1} \ldots \xrightarrow{\beta_l} w} 
\langle \varpi_{i_l}, \beta_l \rangle \ldots \langle \varpi_{i_1}, \beta_1 \rangle
\end{equation*}
(here, the sum is again taken over all paths ending at $w$, 
but now such path automatically begins at $1_W$, because this is the only option for a path of length $l=|n_\bullet|=\ell(w)$ ending at $w$).
\end{corollary}
\begin{proof}
%
Follows from Corollary \ref{pieriformulacwn} directly by induction on $l$
(see also \cite[\S 4]{poststan}, where this corollary is deduced for powers of a single divisor and for $m_\bullet=0$).
\end{proof}
%
This corollary motivates the following terminology. Let us say that 
a \emph{pre-enhanced path} in the Bruhat graph is a pair consisting of an oriented path 
$u_0 \xrightarrow{\beta_1} u_1 \xrightarrow{\beta_2} \ldots \xrightarrow{\beta_l} u_l$ in the Bruhat graph 
and a sequence of the same length $\alpha_{i_1}, \ldots, \alpha_{i_l}$ of simple roots.
The sequence will be called the \emph{enhancement} of the pre-enhanced path. 
The \emph{Z-multiplicity} of such a pre-enhanced path is defined as
$\langle \varpi_{i_1},\beta_1 \rangle \langle \varpi_{i_2},\beta_2 \rangle \ldots \langle \varpi_{i_l},\beta_l \rangle$.
(Note that if $G$ is of type $A$, $D$, or $E$, 
then we can replace $\langle \varpi_{i_j},\beta_j\rangle$ in this formula with $\alpha_{i_j}^*(\beta_j)$.)
And an \emph{enhanced path} is a pre-enhanced path with positive Z-multiplicity or, equivalently, 
a pre-enhanced path such that $\alpha_{i_j} \in \supp \beta_j$ for all $j \in \{1, \ldots, l\}$ (this equivalence holds for arbitrary type of $G$).
Finally, the \emph{vector of D-multiplicities} of the sequence $\alpha_{i_1}, \ldots, \alpha_{i_l}$
is the vector $n_\bullet \in \ZZ_{\ge 0}^r$, 
where each $n_j$ equals the number of times $\alpha_j$ occurs among $\alpha_{i_1}, \ldots, \alpha_{i_l}$.

Then, Corollary \ref{pieriformulacwnmultiple} for $m_\bullet=0$ can be reformulated as follows.
\begin{corollary}\label{pieriformulamultiple2}
Let $n_\bullet \in \ZZ_{\ge 0}^r$, $l = |n_\bullet|$.
Let $w \in W$ be an element with $\ell(w)=l$.
Choose an arbitrary sequence of simple roots $\alpha_{i_1}, \ldots, \alpha_{i_l}$ with vector of D-multiplicities $n_\bullet$.
Then $c_{w, n_\bullet}$ equals the number of pre-enhanced paths in the Bruhat graph leading from $1_W$ to $w$, 
with enhancement $\alpha_{i_1}, \ldots, \alpha_{i_l}$, counting their Z-multiplicities.
Alternatively, instead of all pre-enhanced paths, it is sufficient to consider enhanced paths only.
\end{corollary}
Note that the formulas in Corollaries \ref{pieriformulacwnmultiple} and \ref{pieriformulamultiple2}
do not immediately imply that the multiplication of Schubert divisors is commutative. 
But in reality, we know that the multiplication in the Chow ring is commutative, which, in terms of Corollary \ref{pieriformulamultiple2},
implies that the answer ($c_{w, n_\bullet}$, or the sum of Z-multiplicities) does not depend on the choice of the enhancement
as long as the vector of D-multiplicities stays the same.


Next (and many further times throughout the paper),
we 
need
a relation between the sets $\Phi^+ \cap w' \Phi^-$ and 
$\Phi^+ \cap w \Phi^-$ if $w' \to w$ is an edge in the Bruhat graph. One important case of such a relation follows from the results of \cite{bgg}.
\begin{lemma}\label{antisimpleexistsadmissible}
\begin{enumerate}
\item\label{antisimpleexistsbgg} Let $w \in W$, $w \ne 1_W$. Then there exists $\alpha \in \Phi^+ \cap w (-\Pi)$ (i.e. $\Phi^+ \cap w (-\Pi) \ne \varnothing$).
\item\label{antisimpleadmissible} Let $w \in W$, $\alpha \in \Phi^+ \cap w (-\Pi)$. 
Then $\sigma_\alpha w \xrightarrow{\alpha} w$, 
and $\Phi^+ \cap \sigma_\alpha w \Phi^- = (\Phi^+ \cap w \Phi^-) \setminus \{ \alpha \}$.
\end{enumerate}
\end{lemma}
\begin{proof}
For part \ref{antisimpleexistsbgg}, let $w = \sigma_{i_1} \ldots \sigma_{i_l}$ be a reduced expression. 
Let $\alpha = w(-\alpha_{i_l}) \in w(-\Pi)$. We have $\alpha = \sigma_{i_1} \ldots \sigma_{i_l} (-\alpha_{i_l}) = 
\sigma_{i_1} \ldots \sigma_{i_{l-1}} (\alpha_{i_l})$. Hence, by \cite[Lemma 2.2]{bgg}, $\alpha \in \Phi^+$.

Let us prove part \ref{antisimpleadmissible}.
First, $\alpha \in w \Phi^-$, so by \cite[Lemma 2.3 (ii)]{bgg} and its proof, $\ell(\sigma_\alpha w) < \ell (w)$. 
On the other hand, $w = \sigma_\alpha w \sigma_{-w^{-1}(\alpha)}$, and $-w^{-1} (\alpha) \in \Pi$, so $\ell(\sigma_\alpha w) \ge \ell(w)-1$, 
and $\sigma_\alpha w \xrightarrow{\alpha} w$.

Now, let $l = \ell(w)$, and let $\sigma_\alpha w = \sigma_{i_1} \ldots \sigma_{i_{l-1}}$ be a reduced expression. 
Denote $\alpha_{i_l} = -w^{-1} (\alpha)$, then $w = \sigma_{i_1} \ldots \sigma_{i_{l-1}} \sigma_{i_l}$ is also a reduced expression.
By \cite[Lemma 2.2]{bgg}, $\Phi^+ \cap \sigma_{\alpha} w \Phi^- = \{ \sigma_{i_1} \ldots \sigma_{i_{k-1}} (\alpha_{i_k}) \}_{1 \le k \le l-1}$, 
and $\Phi^+ \cap w \Phi^- = \{ \sigma_{i_1} \ldots \sigma_{i_{k-1}} (\alpha_{i_k}) \}_{1 \le k \le l}$.
Finally, $\sigma_{i_1} \ldots \sigma_{i_{l-1}} (\alpha_{i_l}) = \sigma_\alpha w (-w^{-1}(\alpha)) = \sigma_\alpha (-\alpha) = \alpha$.
\end{proof}

This lemma motivates the following definition. We say that an edge $\sigma_\alpha w \xrightarrow{\alpha} w$ 
of the Bruhat graph is \emph{antisimple} if $\alpha \in \Phi^+ \cap w(-\Pi)$ 
(equivalently, an edge $w \xrightarrow{\alpha} \sigma_\alpha w$ is antisimple if $\alpha \in \Phi^+ \cap w \Pi$).
A path in the Bruhat graph is called \emph{antisimple} if it consists of antisimple edges.
\begin{example}\label{antisimplesln}
If $G=SL_{r+1}$, then $W=S_{r+1}$. 
Let $w \in W$, $w=[s_1,\ldots,s_{r+1}]$.
Then the antisimple edges leading to $w$ are precisely edges of the form
$(i\leftrightarrow j) w \to w$, where $i<j$ and $s_i=s_j+1$ 
(recall that $i\leftrightarrow j$ denotes the transposition interchanging the $i$th and the $j$th coordinate in the $(r+1)$-dimensional space $E$ where $W$ acts, 
in other words, $(i\leftrightarrow j) w=[s_1,\ldots, s_{i-1},s_j,s_{i+1},\ldots,s_{j-1},s_i,s_{j+1},\ldots,s_{r+1}]$).
The label at such an edge is $\varepsilon_j-\varepsilon_i=\alpha_i+\ldots+\alpha_{j-1}$, and 
we have $\varepsilon_j-\varepsilon_i = -w \alpha_{s_j}$.
\end{example}

So, the set $\Phi^+ \cap w \Phi^-$ behaves quite nicely when we apply antisimple reflections. 
But we will also need to know how this set changes when we go along an arbitrary edge of the Bruhat graph, not necessarily an antisimple one.
The answer is given by the following definition and lemma.
\begin{definition}\label{phipm}
Let $w \in W$. 
Let $\alpha \in \Phi^+ \cap w \Phi^-$.
Then we define a map $\psi_{w,\alpha} \colon (\Phi^+ \cap w \Phi^-)\setminus \{\alpha\} \to \Phi$ 
as follows.
\begin{equation*}
\psi_{w,\alpha}(\beta) = 
\begin{cases}
\sigma_\alpha \beta & \text{if } (\beta, \alpha) > 0 \text{ and } \sigma_\alpha \beta \in \Phi^+ \cap w \Phi^+, \\
\beta & \text{otherwise}.
\end{cases}
\end{equation*}
Sometimes we will write simply 
$\psi_{\alpha}$
instead of 
$\psi_{w,\alpha}$ 
if it is clear which $w\in W$ we are talking about.

If $G$ is of type $A$, $D$, or $E$, then by Lemma \ref{sumexists}, we can simply say 
``$\beta - \alpha$ if $\beta-\alpha \in \Phi^+ \cap w \Phi^+$''
instead of 
``$\sigma_\alpha \beta$ if $(\beta, \alpha) > 0$ and $\sigma_\alpha \beta \in \Phi^+ \cap w \Phi^+$''.
\end{definition}

\begin{lemma}\label{admissibilitycrit}
Let $w \in W$, $\alpha \in \Phi^+$. 
Then $\sigma_\alpha w \xrightarrow{\alpha} w$ 
if and only if
$\alpha \in \Phi^+ \cap w \Phi^-$ 
and there are no pairs of roots $\beta, \gamma \in (\Phi^+ \cap w \Phi^-) \setminus \{ \alpha \}$ 
of equal lengths
such that 
$\beta + \gamma = k \alpha$ for some $k \in \NN$
(if $G$ is of type $A$, $D$, or $E$, this is can be reformulated as $\beta + \gamma = \alpha$).

In this case, 
$\psi_{w,\alpha}$
establishes a 
bijection between $(\Phi^+ \cap w \Phi^-) \setminus \{\alpha\}$ and $\Phi^+ \cap \sigma_\alpha w \Phi^-$.

Also, we have 
$\psi_{w,\alpha}(\beta) \preceq \beta$ 
for all $\beta \in \Phi^+ \cap w \Phi^-$. 
And if $\alpha \in w(-\Pi)$, 
then 
$\psi_{w,\alpha}$ 
is the identity map.
\end{lemma}
\begin{proof}
First, by the arguments in the proof of \cite[Corollary 2.3(ii)]{bgg}, 
$\alpha \in \Phi^+ \cap w \Phi^-$ if and only if 
$\ell(\sigma_\alpha w) < \ell(w)$.
So, $\alpha \in \Phi^+ \cap w \Phi^-$ 
is a necessary condition for 
$\sigma_\alpha w \xrightarrow{\alpha} w$.

Next, 
the reflection $\sigma_\alpha$ has the following three types of orbits on $\Phi$:
\begin{enumerate}
\item\label{orbit:alpha} $\{\alpha,-\alpha\}$;
\item\label{orbit:orthogonal} $\{\beta\}$ (a fixed point) for each $\beta\in \Phi$, $(\alpha,\beta)=0$;
\item\label{orbit:sum} for each pair of roots $\beta, \gamma \in \Phi \setminus \{ \pm\alpha \}$ of equal lengths 
such that $\alpha=k(\beta+\gamma)$, $k \in \NN$, 
we have two orbits: $\{\beta,-\gamma=\beta-k\alpha\}$ and $\{\gamma,-\beta=\gamma-k\alpha\}$.
\end{enumerate}
For each such orbit $O$, let us check that $|O \cap (\Phi^+ \cap \sigma_\alpha w \Phi^-)| \le |O \cap (\Phi^+ \cap w \Phi^-)|$.
Let us also check that when this inequality becomes equality, we also have 
$O \cap (\Phi^+ \cap \sigma_\alpha w \Phi^-) = \psi_{w,\alpha} (O \cap (\Phi^+ \cap w \Phi^-))$.

For the orbit of type \ref{orbit:alpha}, $O=\{\alpha, -\alpha\}$, everything is clear: on one hand, $O \cap (\Phi^+ \cap w \Phi^-) = \{\alpha\}$.
On the other hand, $-\alpha = \sigma_\alpha \alpha \in \sigma_\alpha w \Phi^-$, so $\alpha \in \sigma_\alpha w \Phi^+$, also $-\alpha \in \Phi^-$, 
so $O \cap (\Phi^+ \cap \sigma_\alpha w \Phi^-) = \varnothing$.
In this case, we actually have $|O \cap (\Phi^+ \cap \sigma_\alpha w \Phi^-)| < |O \cap (\Phi^+ \cap w \Phi^-)|$
(in fact, $|O \cap (\Phi^+ \cap \sigma_\alpha w \Phi^-)| = |O \cap (\Phi^+ \cap w \Phi^-)| - 1$).

For an orbit of type \ref{orbit:orthogonal}, $O=\{\beta\}$, everything is also clear: 
$\beta \in w \Phi^- \Leftrightarrow \beta = \sigma_\alpha \beta \in \sigma_\alpha w \Phi^-$, so 
we actually have 
$O \cap (\Phi^+ \cap \sigma_\alpha w \Phi^-) = O \cap (\Phi^+ \cap w \Phi^-) = \psi_{w,\alpha} (O \cap (\Phi^+ \cap w \Phi^-))$.

Finally, consider an orbit of type \ref{orbit:sum}.
Without loss of generality, consider $O = \{\beta, -\gamma\}$ (for the other orbit, the same argument works if we switch the notation for $\beta$ and $\gamma$). 
First, note that if $\beta = (-\gamma)+k\alpha \in \Phi^-$, then we must also have $-\gamma \in \Phi^-$ since $\alpha \in \Phi^+$.
So, if $\beta \in \Phi^-$, then 
$O \cap (\Phi^+ \cap \sigma_\alpha w \Phi^-) = O \cap (\Phi^+ \cap w \Phi^-) = \varnothing$. In what follows, suppose that $\beta \in \Phi^+$.

Similarly, if $\beta \in w \Phi^+$, then $-\gamma \in w \Phi^+$ since $\alpha \in w \Phi^-$, 
and also $\beta = \sigma_\alpha (-\gamma) \in \sigma_\alpha w \Phi^+$ and 
$-\gamma = \sigma_\alpha \beta \in \sigma_\alpha w \Phi^+$.
So, if $\beta \in w \Phi^+$, then again
$O \cap (\Phi^+ \cap \sigma_\alpha w \Phi^-) = O \cap (\Phi^+ \cap w \Phi^-) = \varnothing$. In what follows, suppose that $\beta \in w \Phi^-$.

Let us consider 4 cases for $-\gamma$: whether it is positive or negative, and whether it belongs to $w\Phi^+$ or $w\Phi^-$.
We always keep in mind that $\beta \in w \Phi^- \Leftrightarrow -\gamma = \sigma_\alpha \beta \in \sigma_\alpha w \Phi^-$
and $-\gamma \in w \Phi^- \Leftrightarrow \beta = \sigma_\alpha (-\gamma) \in \sigma_\alpha w \Phi^-$.
Also note that $(\alpha,\beta) > 0$ (this is important for the definition of $\psi_{w,\alpha}$).
\begin{enumerate}
\item $-\gamma \in \Phi^+ \cap w \Phi^+$. 
Then 
$O \cap (\Phi^+ \cap w \Phi^-) = \{ \beta \}$, 
$O \cap (\Phi^+ \cap \sigma_\alpha w \Phi^-) = \{ -\gamma \} 
= \{ \sigma_\alpha \beta \}
= \{ \psi_{w,\alpha} (\beta) \}$.
\item $-\gamma \in \Phi^+ \cap w \Phi^-$. 
Then 
$O \cap (\Phi^+ \cap w \Phi^-) = 
O \cap (\Phi^+ \cap \sigma_\alpha w \Phi^-) = O = \psi_{w,\alpha} (O)$.
\item $-\gamma \in \Phi^- \cap w \Phi^+$. 
Then 
$O \cap (\Phi^+ \cap \sigma_\alpha w \Phi^-) = \varnothing$, while 
$O \cap (\Phi^+ \cap w \Phi^-) = \{ \beta \}$.
We have $|O \cap (\Phi^+ \cap \sigma_\alpha w \Phi^-)| < |O \cap (\Phi^+ \cap w \Phi^-)|$.
\item $-\gamma \in \Phi^- \cap w \Phi^-$. 
Then 
$O \cap (\Phi^+ \cap \sigma_\alpha w \Phi^-) = \{ \beta \}$ and
$O \cap (\Phi^+ \cap w \Phi^-) = \{ \beta \}$. Also, $\psi_{w,\alpha} (\beta) = \beta$.
\end{enumerate}

Now, note that we have exactly 1 orbit $O$ of type \ref{orbit:alpha}, which gives 
$|O \cap (\Phi^+ \cap \sigma_\alpha w \Phi^-)| = |O \cap (\Phi^+ \cap w \Phi^-)| - 1$.
Therefore, we have 
$|\Phi^+ \cap w \Phi^-| - |\Phi^+ \cap \sigma_\alpha w \Phi^-| = 1$ if and only if we have no orbits 
$\{ \beta, -\gamma \}$ of type \ref{orbit:sum} with $-\gamma \in \Phi^- \cap w \Phi^+$, otherwise 
we have $|\Phi^+ \cap w \Phi^-| - |\Phi^+ \cap \sigma_\alpha w \Phi^-| > 1$.
In other words, $|\Phi^+ \cap w \Phi^-| - |\Phi^+ \cap \sigma_\alpha w \Phi^-| = 1$
if and only if we have no pairs of roots $\beta, \gamma \in \Phi \setminus \{ \pm \alpha\}$ of equal lengths such that 
$\beta + \gamma = k \alpha$ for some $k \in \NN$, 
$\beta \in \Phi^+ \cap w \Phi^-$, and $\gamma \in \Phi^+ \cap w \Phi^-$ (note that this condition is now 
symmetric in $\beta$ and $\gamma$, so the other orbit listed in type \ref{orbit:sum} would give us one more strict inequality in this case).

Finally, if we have no such pairs, then we have also checked that for each orbit $O$ except for the one orbit of type \ref{orbit:alpha},
$\psi_{w,\alpha}$ establishes a bijection between 
$O \cap (\Phi^+ \cap w \Phi^-)$ and $O \cap (\Phi^+ \cap \sigma_\alpha w \Phi^-)$.
Overall, we have a bijection between $(\Phi^+ \cap w \Phi^-) \setminus \{ \alpha \}$
and $O \cap (\Phi^+ \cap \sigma_\alpha w \Phi^-)$.

The inequality $\psi_{w,\alpha}(\beta) \preceq \beta$ follows directly from the definition of $\psi_{w,\alpha}$.
And if $\alpha \in w(-\Pi)$, then we already know by Lemma \ref{antisimpleexistsadmissible} (\ref{antisimpleadmissible})
that $\Phi^+ \cap \sigma_\alpha w \Phi^- = (\Phi^+ \cap w \Phi^-) \setminus \{\alpha\}$.
So, $\psi_{w,\alpha}$ is in fact a permutation of the finite partially ordered set 
$(\Phi^+ \cap w \Phi^-) \setminus \{\alpha\}$, and $\psi_{w,\alpha}$ is also a non-increasing map, so it must be the identity permutation.
\end{proof}

\begin{example}\label{admissibleexample}
If $G=SL_{r+1}$, then $W=S_{r+1}$. 
If $w=[s_1,\ldots,s_{r+1}]$, then 
$\sigma_\alpha w \xrightarrow{\alpha} w$ if and only if
$\sigma_\alpha$ is a transposition
$(i\leftrightarrow j)$ such that $i<j$, $s_i>s_j$, and there are no indices $k$ such that $i<k<j$ and $s_i>s_k>s_j$.
\end{example}

\begin{sideremark}
In Lemmas \ref{antisimpleexistsadmissible} and \ref{admissibilitycrit} (and in Definition \ref{phipm}), 
we speak about
edges \emph{ending} at a fixed element $w \in W$.
We will not need this later, but after a slight modification, it is possible to have a version of these lemmas and definition for 
edges \emph{beginning} at a fixed $w \in W$, i.e. for edges of the form $w \xrightarrow{\alpha} \sigma_\alpha w$.
For example, if we have such an edge, then $\alpha \in \Phi^+ \cap w \Phi^+$, and in Definition \ref{phipm}, 
instead of $\phi_{w,\alpha}$, we could define, say, $\xi_{w,\alpha} \colon \Phi^+ \cap w \Phi^- \to \Phi$
by replacing ``$(\beta, \alpha) > 0$'' with ``$(\beta, \alpha) < 0$'' (and keeping the rest of the definition unchanged).
Then this $\xi_{w,\alpha}$ would establish a bijection between $\Phi^+ \cap w \Phi^-$ and $(\Phi^+ \cap \sigma_\alpha w \Phi^-) \setminus \{\alpha\}$.
In fact, we would even get $\xi_{w,\alpha}=\psi_{\sigma_\alpha w,\alpha}^{-1}$, but we will not need all of that.
\end{sideremark}

Now we are ready to see how the set $\Phi^+ \cap w \Phi^-$ is related to the set of labels in a path in the Bruhat graph.


\begin{lemma}\label{antisimplelabelsisphiwphi}
\begin{enumerate}
\item\label{antisimplepathexists} For each $w \in W$, there exists an antisimple (oriented) path in the Bruhat graph from $1_W$ to $w$.
\item\label{antisimplepathinequality} Let $v, w \in W$, let 
$v \xrightarrow{\beta_1} \ldots \xrightarrow{\beta_l} w$ 
be a path in the Bruhat graph. 
Then there exists 
a bijection 
$h \colon (\Phi^+ \cap v \Phi^-) \sqcup \{ 1, \ldots, l\} \to \Phi^+ \cap w \Phi^-$ such that 
$h(l) = \beta_l$,
$h(i) \succeq \beta_i$
for $1 \le i \le l-1$,
and $h(\alpha) \succeq \alpha$ for all $\alpha \in \Phi^+ \cap v \Phi^-$. 

Moreover, if the path 
$v \xrightarrow{\beta_1} \ldots \xrightarrow{\beta_l} w$ 
is antisimple, then we actually have 
$h(i) = \beta_i$ for $1 \le i \le l$, $h(\alpha) = \alpha$ for all $\alpha \in \Phi^+ \cap v \Phi^-$, 
and 
$\{ \beta_1, \ldots, \beta_l \} \sqcup (\Phi^+ \cap v \Phi^-) = \Phi^+ \cap w \Phi^-$.
For $v=1_W$ (still assuming the path is antisimple), we simply get 
$\{ \beta_1, \ldots, \beta_l \} = \Phi^+ \cap w \Phi^-$.
\end{enumerate}
\end{lemma}
\begin{proof}
Part \ref{antisimplepathexists} follows from Lemma \ref{antisimpleexistsadmissible} (\ref{antisimpleexistsbgg}) directly by induction on $\ell(w)$.

For part \ref{antisimplepathinequality}, we also use induction on $\ell(w)$ (with fixed $v$).
For $w=v$, we take the identity map as $h$. 
If $w \ne v$, then $\beta_l \in \Phi^+\cap w\Phi^-$ by Lemma \ref{admissibilitycrit}, 
and we set $h(l) = \beta_l$.

Next, by the induction hypothesis 
(applied to the path $v \xrightarrow{\beta_1} \ldots \xrightarrow{\beta_{l-1}} \sigma_{\beta_l} w$),
there exists 
a bijection 
$g \colon (\Phi^+ \cap v \Phi^-) \sqcup \{ 1, \ldots, l-1 \} \to \Phi^+\cap \sigma_{\beta_l} w \Phi^-$
such that $g(i) \succeq \beta_i$ for $1 \le i \le l-1$ and $g(\alpha) \succeq \alpha$ for all $\alpha \in \Phi^+ \cap v \Phi^-$.

By Lemma \ref{admissibilitycrit}, $\psi_{\beta_l}$ is a bijection between 
$(\Phi^+\cap w\Phi^-)\setminus \{ \beta_l \}$ 
and $\Phi^+\cap \sigma_{\beta_l} w \Phi^-$.
For $1 \le i \le l-1$ (resp.\ $\alpha \in \Phi^+ \cap v \Phi^-$), set
$h(i) = \psi_{\beta_l}^{-1} (g(i))$ (resp.\ $h(\alpha) = \psi_{\beta_l}^{-1} (g(\alpha))$).
Then the bijectivity of $h$ follows directly by construction.
And $\beta_i \preceq g(i) \preceq \psi_{\beta_l}^{-1} (g(i)) = h(i)$ 
(resp.\ $\alpha \preceq g(\alpha) \preceq h(\alpha)$)
by Lemma \ref{admissibilitycrit}.

Finally, if the path 
$v \xrightarrow{\beta_1} \ldots \xrightarrow{\beta_l} w$ 
is antisimple, then 
the first inequality in these formulas becomes equality by the induction hypothesis (since the first $l-1$ edges are antisimple), 
and the second inequality becomes equality by Lemma \ref{admissibilitycrit} (since the last edge is antisimple).
And the equality $\{ \beta_1, \ldots, \beta_l \} \sqcup (\Phi^+ \cap v \Phi^-) = \Phi^+ \cap w \Phi^-$
follows from the preceding claims of the lemma, which we have already proved now.
\end{proof}

With this lemma, one could already formulate a certain answer to the question when $c_{w,n_{\bullet}} = 0$ and when 
$c_{w,n_{\bullet}} > 0$.
Roughly speaking, we are going to choose simple roots $\alpha_{i_k} \in \supp \beta_k \subseteq \supp h(k)$, then 
$\langle \varpi_{i_k}, \beta_k \rangle > 0$.
But before we formulate our final answer in the whole generality, 
let us 
have 
a few definitions and 
combinatorial lemmas.

\begin{definition}
Given a set of positive roots $A\subseteq \Phi^+$,
we 
say that
a \emph{distribution of simple roots} on $A$
is 
a function $f\colon A\to \Pi$
such that 
$f(\alpha)\in \supp \alpha$ for each $\alpha\in A$.

The \emph{vector of D-multiplicities} of such a distribution is
the vector $n_\bullet \in \ZZ_{\ge 0}^r$ such that for each $\alpha_i \in \Pi$,
there are exactly $n_i$ roots $\alpha \in A$
such that $f(\alpha)=\alpha_i$ 
(``$f$ takes each value $\alpha_i$ exactly $n_i$ times'').
\end{definition}

\begin{definition}
We 
say that 
a \emph{configuration}
is
a pair $(A,n_{\bullet})$, 
where $A \subseteq \Phi^+$, $n_\bullet \in \ZZ_{\ge 0}^r$.

Given a subset $A \subseteq \Phi^+$ and subsets $I,J \subseteq \Pi$, we say that the 
\emph{restriction} $R_{I,J} (A)$ is $\{ \alpha \in A : \supp \alpha \cap I \ne \varnothing , \supp \alpha \cap J = \varnothing \}$.
We will often need restrictions with $J = \varnothing$ (in other words, with 
the second condition $\supp \alpha \cap J = \varnothing$ being trivial), 
denote $R_{I} (A) = R_{I,\varnothing} (A)$.

Finally, we call a configuration $(A,n_{\bullet})$ \emph{excessive} (resp.\ \emph{strictly excessive})
if $A = R_{\supp n_{\bullet}} (A)$ (in other words, there are no roots in $A$ with supports entirely outside $\supp n_\bullet$), 
$|A| = |n_\bullet|$, and for each $I \subsetneq \supp n_\bullet$, $I \ne \varnothing$ we have $|R_I(A)| \ge |p_i(n_\bullet)|$
(resp.\ $|R_I(A)| > |p_i(n_\bullet)|$).
\end{definition}

\begin{lemma}[Generalized Hall's representative lemma]\label{hallgenrep}
Let $A_1,\ldots, A_k$ be several finite sets, and let $n_1,\ldots,n_k\in \NN$. 
Suppose that for each subset $I\subseteq \{1,\ldots,k\}$ one has 
$$
\left|\vphantom{\bigcup}\smash{\bigcup_{i\in I}A_i}\right|\ge \sum_{i\in I} n_i.
$$
Then one can choose elements $n_i$ elements in each set $A_i$ 
($a_{i,j} \in A_i$ for $1\le i\le n$, $1 \le j \le n_i$) 
so that all of the chosen elements are distinct ($a_{i,j} \ne a_{i',j'}$ if $i \ne i'$ or $j \ne j'$).
\end{lemma}
\begin{proof}
This lemma follows from the standard combinatorial Hall's representative lemma
applied to the collection of sets consisting of $A_1$ repeated $n_1$ times, $A_2$ repeated $n_2$ times, \ldots, $A_k$ repeated $n_k$ times.
\end{proof}

\begin{lemma}\label{hallintermsofrpre}
Let $A\subseteq \Phi^+$, let $n_\bullet \in \ZZ_{\ge 0}^r$.
The following conditions are equivalent.
\begin{enumerate}
\item\label{halltruer} 
The configuration $(A, n_\bullet)$ is 
excessive.
\item\label{distributionexistsr}
There exists a simple root distribution on $A$ with vector of D-multiplicities $n_\bullet$.
\end{enumerate}
\end{lemma}
\begin{proof}
First, note that for each $I\subseteq \Pi$, by definition of $R_I(A)$, we have
\begin{equation}\label{hallintermsofrformula}
R_I(A)=\bigcup_{\alpha_i \in I} R_{\{ \alpha_i \}}(A).
\end{equation}

$\ref{halltruer}\Rightarrow\ref{distributionexistsr}$.
In view of 
(\ref{hallintermsofrformula}),
the 
excessiveness condition
implies
the hypothesis of generalized Hall representative lemma (Lemma \ref{hallgenrep}) 
applied to the $|\supp n_\bullet|$ sets: $R_{\{ \alpha_i \}}(A)$ for each $\alpha_i \in \supp n_\bullet$.
And Lemma \ref{hallgenrep} says that for each $\alpha_j \in \supp n_\bullet$, 
we can choose $n_j$ elements of $R_{\{ \alpha_j \}}(A)$, i.~e., $n_j$ roots $\alpha\in A$
such that $\alpha_j\in \supp\alpha$, and all chosen roots (for all simple roots $\alpha_j \in \supp n_\bullet$ together) 
are different.

In total, we chose $|n_\bullet|$ roots, and, by the definition an excessive configuration,
$|n_\bullet|=|A|$.
So, each root from $A$ was chosen exactly once, and we can set 
$f(\alpha)=\alpha_j$ if $\alpha$ was chosen as an element of $R_{\{ \alpha_j \}}(A)$. This is a simple root 
distribution on $A$, and it clearly has vector of D-multiplicities $n_\bullet$.

$\ref{distributionexistsr}\Rightarrow\ref{halltruer}$.
Let $f$ be a simple root distribution on $A$ with vector of D-multiplicities $n_\bullet$. 
The equalities $A = R_{\supp n_\bullet}(A)$ and $|A| = |n_\bullet|$ follow directly from the
definition of the vector of D-multiplicities of a distribution.

Also,
for each $\alpha_i \in \supp n_\bullet$, we have
$n_i=|f^{-1}(\alpha_i)|$ and $f^{-1}(\alpha_i)\subseteq R_{\{ \alpha_i \}}(A)$.
So, for each 
$I\subseteq \supp n_\bullet$, 
we have
\begin{equation*}
|p_I(n_\bullet)| = \sum_{\alpha_i \in I}n_i=\left|\vphantom{\bigcup}\smash{\bigcup_{\alpha_i \in I}f^{-1}(\alpha_i)}\right|,
\text{\qquad and (by (\ref{hallintermsofrformula}) again) \qquad}
\bigcup_{\alpha_i \in I}f^{-1}(\alpha_i)\subseteq R_I(A).
\end{equation*}
Therefore, 
$|p_I(n_\bullet)| \le |R_I(A)|$.
\end{proof}

Finally, we are ready to formulate the main result of this section.

\begin{proposition}\label{sortabilitycriterium}
Let $G$ be a split semisimple simply connected algebraic group of arbitrary type (not necessarily $A$, $D$, or $E$).
Let $w \in W$, let $n_{\bullet} \in \ZZ_{\ge 0}^r$. 
If $\ell(w) \ne |n_{\bullet}|$, then $c_{w,n_\bullet} = 0$. 
If $\ell(w) = |n_{\bullet}|$, 
then the following conditions are equivalent.
\begin{enumerate}
\item\label{cgreater0} $c_{w,n_\bullet} > 0$.
\item\label{labeledsortingexists} 
There exists an
enhanced path 
from $1_W$ to $w$
in the Bruhat graph such that 
the vector of D-multiplicities of the enhancement is $n_{\bullet}$.
\item\label{labeledsortingexistforall}
For every enhancement $\alpha_{i_1}, \ldots, \alpha_{i_l}$ with vector of D-multiplicities $n_\bullet$, 
there exists an
enhanced path from $1_W$ to $w$
with enhancement $\alpha_{i_1}, \ldots, \alpha_{i_l}$.
\item\label{distributionexists} 
There exists a simple root distribution on $\Phi^+\cap w\Phi^-$ with vector of D-multiplicities $n_{\bullet}$.
\item\label{excessiveconfiguration}
The configuration $(\Phi^+\cap w\Phi^-, n_\bullet)$ is 
excessive.
\end{enumerate}
If these conditions fail, then $c_{w,n_\bullet} = 0$.
\end{proposition}
\begin{proof}
$\ref{cgreater0}\Leftrightarrow \ref{labeledsortingexists} \Leftrightarrow \ref{labeledsortingexistforall}$. 
This follows directly from Corollary \ref{pieriformulamultiple2}.

$\ref{labeledsortingexists}\Rightarrow \ref{distributionexists}$.
Suppose that there exists 
an 
enhanced path 
$(1_W \xrightarrow{\beta_1} \ldots \xrightarrow{\beta_l} w, \alpha_{i_1}, \ldots, \alpha_{i_l})$
in the Bruhat graph such that 
the vector of D-multiplicities of the enhancement is $n_{\bullet}$.
By the definition of an enhanced path, we have
$\alpha_{i_k} \in \supp \beta_k$ for $1 \le k \le l$.

By Lemma \ref{antisimplelabelsisphiwphi} (\ref{antisimplepathinequality}), 
there exists 
a bijection 
$h \colon \{ 1, \ldots, l\} \to \Phi^+ \cap w \Phi^-$ such that 
$\beta_k \preceq h(k)$
for $1 \le k \le l$. 
For each $\alpha \in \Phi^+ \cap w \Phi^-$, set $f(\alpha) = \alpha_{i_{h^{-1}(\alpha)}}$.
Then $f(\alpha) \in \supp \beta_{h^{-1}(\alpha)} \subseteq \supp \alpha$, so $f$ is a distribution of simple roots on $\Phi^+ \cap w \Phi^-$.
And the number of times each $\alpha_j \in \Pi$ occurs as $f(\alpha)$ for $\alpha \in \Phi^+ \cap w \Phi^-$
equals the number of times $j$ occurs among $i_1, \ldots, i_l$, because $h$ is a bijection.
In other words, the vectors of D-multiplicities of the enhancement $\alpha_{i_1}, \ldots, \alpha_{i_l}$
and of the distribution $f$ coincide.

$\ref{distributionexists}\Rightarrow \ref{labeledsortingexists}$.
Let $f\colon \Phi^+\cap w\Phi^-\to \Pi$ be
a simple root distribution on $\Phi^+\cap w\Phi^-$
with vector of D-multiplicities $n_\bullet$.

By Lemma \ref{antisimplelabelsisphiwphi} (\ref{antisimplepathexists}), 
there exists an antisimple path from $1_W$ to $w$ in the Bruhat graph, denote it by
$1_W \xrightarrow{\beta_1} \ldots \xrightarrow{\beta_l} w$.
Then Lemma \ref{antisimplelabelsisphiwphi} (\ref{antisimplepathinequality})
also says that
$\{\beta_1,\ldots,\beta_{\ell(w)}\}=\Phi^+\cap w\Phi^-$.
Let us define an enhancement $\alpha_{i_1}, \ldots, \alpha_{i_l}$
by setting $\alpha_{i_k} = f(\beta_k)$.
By the definition of a distribution, $f(\beta_k) \in \supp \beta_k$, 
so we really get an enhanced path, not just a pre-enhanced one.
The vectors of D-multiplicities of $\alpha_{i_1}, \ldots, \alpha_{i_l}$ and of $f$ also obviously coincide.

$\ref{distributionexists} \Leftrightarrow \ref{excessiveconfiguration}$.
This follows directly from Lemma \ref{hallintermsofrpre}
applied to $A = \Phi^+ \cap w \Phi^-$.

Finally, the last equality $c_{w,n_\bullet} = 0$ (if the 5 equivalent conditions fail) follows from the previously known fact 
that $c_{w,n_\bullet}$ cannot be negative (or from Corollary \ref{pieriformulamultiple2}: 
it is clear from the definition of a Z-multiplicity that it cannot be negative).
\end{proof}

\begin{corollary}\label{anydistributiongives}
Let $w \in W$, let $n_{\bullet} \in \ZZ_{\ge 0}^r$. 
Suppose that the 
5
equivalent conditions in the statement of Proposition \ref{sortabilitycriterium}
hold (or just one of them holds). 
Then the path from equivalent condition \ref{labeledsortingexists} can actually be made antisimple.

In other words and moreover, 
let $f \colon \Phi^+ \cap w \Phi^- \to \Pi$ be a distribution 
of simple roots with vector of D-multiplicities $n_{\bullet}$.
Then there exists an \emph{antisimple} 
enhanced path 
of the form $(1_W \xrightarrow{\beta_1} \ldots \xrightarrow{\beta_l} w, f(\beta_1), \ldots, f(\beta_l))$
(by Lemma \ref{antisimplelabelsisphiwphi}, $\{ \beta_1, \ldots, \beta_l \} = \Phi^+ \cap w \Phi^-$, 
so it makes sense to speak about $f(\beta_j)$; clearly, the vector of D-multiplicities of 
the enhancement
is then $n_\bullet$).\qed
\end{corollary}

\begin{sideremark}\label{listoflabelscannotfindantisimple}
In Proposition \ref{sortabilitycriterium}, if $c_{w, n_\bullet} > 0$, 
we construct an antisimple 
enhanced path with a prescribed vector of D-multiplicities of the enhancement.
Or, we construct such a path 
with the enhancement coinciding with a prescribed distribution of simple roots on $\Phi^+ \cap w \Phi^-$
(in Corollary \ref{anydistributiongives}, we speak about this kind of path).
However, if we start with fixing a sequence $\alpha_{i_1}, \ldots, \alpha_{i_l}$ of simple roots 
(with vector of D-multiplicities $n_\bullet$), it is not always possible to have an antisimple 
enhanced path 
from $1_W$ to $w$ with this enhancement (in this order). 
In other words, we cannot say ``antisimple enhanced path'' in condition \ref{labeledsortingexistforall} of Proposition \ref{sortabilitycriterium}.

For example, if we take $G=SL_3$, $w=[3,1,2]$ and $n_\bullet=(1,1)$, then $\Phi^+ \cap w \Phi^-=\{ \alpha_1, \alpha_1 + \alpha_2 \}$, 
and we see immediately from Proposition \ref{sortabilitycriterium} (\ref{distributionexists}) that $c_{w,n_\bullet} > 0$.
However, if we want to use the sequence $\alpha_2,\alpha_1$ as the enhancement, then 
a path from equivalent condition \ref{labeledsortingexistforall} exists (it is 
$1_W \xrightarrow{\alpha_2} [1,3,2] \xrightarrow{\alpha_1} w$), but it
is really not antisimple.
(Note 
that in path, all labels on edges actually coincide with the enhancement, but is by far not universal: 
an enhancement is always a sequence of simple roots, 
and edges in the Bruhat graph can be labeled by arbitrary positive roots as long as the corresponding reflection changes the length by 1.)
%
\end{sideremark}

Before the end of this section, let us prove one more lemma, about equivalent characterizations of antisimple 
edges.
We will only need it in the next sections, where we are going to assume that all roots have equal 
lengths (type $A$, $D$, or $E$). But the proof is simple enough without the equal lengths assumptions, and the lemma itself may be of independent interest.
It implies, for example, that even without the simply-laced assumption, an antisimple 
edge
is the only case when $\psi_{\alpha}$ is the identity map.

\begin{lemma}\label{antisimpleisminimal}
Let $w\in W$, $\alpha\in \Phi^+\cap w\Phi^-$.
\kpar
The following conditions are equivalent:
\begin{enumerate}
\item\label{alphaantisimple} $w^{-1}\alpha\in -\Pi$.
\item\label{alphaminimal} $\alpha$ is a maximal 
element of 
the set $\Phi^+\cap w\Phi^-$ 
with respect to the order $\prec_w$.
\item\label{alphaminimalsimplecrit} It is impossible to find roots $\beta,\gamma \in \Phi^+\cap w\Phi^-$ 
such that $\alpha=\beta+\gamma$,
and
it is impossible to find a root $\beta\in \Phi^+\cap w\Phi^-$ such that
$\beta-\alpha\in \Phi^+\cap w\Phi^+$.
\item\label{alphaminimalbcfgcrit} It is impossible to find roots $\beta,\gamma \in (\Phi^+\cap w\Phi^-) \setminus \{ \alpha \}$ 
such that $\gamma = -\sigma_\alpha \beta$,
and it is impossible to find a root $\beta\in \Phi^+\cap w\Phi^-$ such that
$(\beta,\alpha) > 0$ 
and $\sigma_\alpha \beta \in \Phi^+\cap w\Phi^+$.
\end{enumerate}
\end{lemma}
\begin{proof}
$\ref{alphaantisimple}\Rightarrow \ref{alphaminimal}$.
Let $\alpha\in \Phi^+\cap w(-\Pi)$.
Assume that $\beta\in \Phi^+\cap w\Phi^-$, $\alpha \prec_w\beta$.
Then, by the definition of $\prec_w$, $w^{-1}\alpha\prec w^{-1}\beta$. 
But $w^{-1}\beta\in \Phi^-$, $w^{-1}\alpha\in -\Pi$, 
a contradiction.

$\ref{alphaminimal}\Rightarrow \ref{alphaminimalsimplecrit}$.
Let $\alpha$ be a maximal element of $\Phi^+\cap w\Phi^-$ with respect to $\prec_w$.
If there exist $\beta,\gamma \in \Phi^+\cap w\Phi^-$ 
such that $\alpha=\beta+\gamma$ or 
if there exists $\beta\in \Phi^+\cap w\Phi^-$ such that
$\beta-\alpha\in \Phi^+\cap w\Phi^+$,
then 
$w^{-1}\beta-w^{-1}\alpha\in \Phi^+$ (as $-w^{-1} \gamma$ or directly).
So, $\alpha\prec_w \beta$, a contradiction with maximality of $\alpha$.


$\ref{alphaminimalsimplecrit}\Rightarrow \ref{alphaantisimple}$.
Assume that $w^{-1}\alpha\notin -\Pi$.
Then, since $w^{-1}\alpha\in \Phi^-$, by \cite[\S 10.1, Theorem$'$]{humps}, it is possible to decompose $w^{-1}\alpha=\beta'+\gamma'$, 
where $\beta',\gamma' \in \Phi^-$.
We have $w\beta'+w\gamma'=\alpha \in \Phi^+$.
The roots $w\beta'$ and $w\gamma'$ cannot be both negative.
Without loss of generality, suppose $w\beta'\in \Phi^+$.
Set $\beta=w\beta'$, $\gamma=w\gamma'$.

If $\gamma \in \Phi^-$, then 
$\beta - \alpha = -\gamma \in \Phi^+\cap w\Phi^+$,
a contradiction.
If $\gamma\in \Phi^+$, then $\beta,\gamma \in \Phi^+\cap w\Phi^-$ and $\alpha=\beta+\gamma$,
a contradiction again.

$\ref{alphaantisimple}\Rightarrow \ref{alphaminimalbcfgcrit}$: a direct proof using properties of roots would not be very easy, especially in type $G_2$, 
let us use Lemmas \ref{antisimpleexistsadmissible} and \ref{admissibilitycrit} instead, once we already know them (with the help of \cite{bgg}).
Let $\alpha\in \Phi^+\cap w(-\Pi)$.
By Lemma \ref{antisimpleexistsadmissible} (\ref{antisimpleadmissible}), $\sigma_\alpha w \xrightarrow{\alpha} w$. 

If there exist $\beta, \gamma \in (\Phi^+ \cap w \Phi^-) \setminus \{ \alpha \}$ such that $\gamma = -\sigma_\alpha \beta$, 
then we can write $\beta + \gamma = \langle \beta, \alpha \rangle \alpha$. Moreover, $\langle \beta, \alpha \rangle > 0$
since $\alpha, \beta, \gamma \in \Phi^+$. We have a contradiction with the equivalent condition for $\sigma_\alpha w \xrightarrow{\alpha} w$
from Lemma \ref{admissibilitycrit}. And if there exists $\beta \in \Phi^+ \cap w \Phi^-$ such that $(\beta,\alpha) > 0$ and 
$\sigma_\alpha \beta \in \Phi^+\cap w\Phi^+$, then $\psi_{w,\alpha}$ is not the identity map, which also contradicts 
Lemma \ref{admissibilitycrit} for $\alpha \in w(-\Pi)$.

$\ref{alphaminimalbcfgcrit}\Rightarrow \ref{alphaantisimple}$.
Assume that $-w^{-1} \alpha \notin \Pi$, denote $\alpha' = -w^{-1} \alpha \in \Phi^+$. 
By the proof of \cite[Lemma 10.2 A]{humps}, there exists a simple root $\alpha_i$ such that $(\alpha', \alpha_i) > 0$.
Then $k := \langle \alpha_i, \alpha' \rangle > 0$, and $\sigma_{\alpha'} \alpha_i = \alpha_i - k \alpha'$ cannot be 
a positive root. So, $\sigma_{\alpha'} \alpha_i \in \Phi^-$. Then $-w \alpha_i, w \sigma_{\alpha'} \alpha_i \in w \Phi^-$, 
and the operator $-\sigma_\alpha$ interchanges these two roots.

We also have $-w \alpha_i + w \sigma_{\alpha'} \alpha_i = k \alpha$, 
so the roots $-w \alpha_i$ and $w \sigma_{\alpha'} \alpha_i$ cannot be both negative. 
Choose a positive root from $\{ -w \alpha_i, w \sigma_{\alpha'} \alpha_i \}$ and denote it by $\beta$, denote the other of these two roots by $\gamma$.
Then $\beta \in \Phi^+ \cap w \Phi^-$, $\gamma \in w \Phi^-$, and $-\sigma_\alpha$ interchanges $\beta$ and $\gamma$.

If $\gamma \in \Phi^-$, then $-\gamma = \sigma_\alpha \beta \in \Phi^+ \cap w \Phi^+$, a contradiction.
If $\gamma \in \Phi^+$, then we also get a contradiction with the assumption of condition \ref{alphaminimalbcfgcrit}
once we check that $\beta \ne \alpha$, $\gamma \ne \alpha$.
But $-\sigma_\alpha$ interchanges $\beta$ and $\gamma$, so if one of the roots $\beta$ and $\gamma$ equals $\alpha$, 
then the other root also equals $\alpha$. Then in particular $-w \alpha_i = \alpha$, a contradiction with $-w^{-1} \alpha \notin \Pi$.
\end{proof}

\section{Necessary conditions for $c_{w,n_\bullet}=1$}\label{sectionuniqsortnes}

In this section, we are going to find some necessary conditions for $c_{w,n_\bullet}=1$.
Later we will prove that all these conditions together are also sufficient for $c_{w,n_\bullet}=1$.
Starting from this section, 
\emph{we always assume that the Dynkin diagram is simply-laced ($G$ is of type $A$, $D$, or $E$, all roots have equal lengths)}.
In fact, some intermediate results will also hold for all types of $G$, but the proofs for arbitrary types are more complicated, 
and the intermediate results don't look very important on their own.

\subsection{Basic necessary conditions for $c_{w,n_\bullet}=1$}
We start with a few basic lemmas. For a while, we are going to use the following lemma for paths consisting of a single edge only, 
but at some point we will need it in the whole generality.

%


\begin{lemma}\label{sortingprefixmulttwopropagates}
Let $v,w \in W$, $m_\bullet \in \ZZ_{\ge 0}^r$.
Suppose that there exists an 
enhanced path in the Bruhat graph from $v \in W$ to $w \in W$
such that the vector of D-multiplicities of the enhancement equals $m_\bullet$.
Let $n_\bullet \in \ZZ_{\ge 0}^r$, $|n_\bullet|=\ell(v)$.
Then $c_{w,n_\bullet + m_\bullet}\ge c_{v, n_\bullet}$.

In particular, if 
$c_{v, n_\bullet} \ge 2$,
then 
$c_{w,n_\bullet + m_\bullet} \ge 2$.
\end{lemma}
\begin{proof}
%
Follows directly from Corollary \ref{pieriformulacwnmultiple}.
\end{proof}
Unfortunately, we cannot claim here that $c_{w, n_\bullet + m_\bullet}$ is divisible by $c_{v, n_\bullet}$:
in (\ref{pieriformulacwnmultiplerel}),
it is not necessarily true that all paths in the sum begin at the same vertex $v$. 

\begin{corollary}\label{antireducedsortingprefixmulttwopropagates}
Let $w\in W$, 
let $n_\bullet\in \ZZ_{\ge 0}^r$, $|n_\bullet|=\ell(w)$.
\kpar
Let $f$ be a simple root distribution on $\Phi^+\cap w\Phi^-$ with vector of D-multiplicities 
$n_\bullet$.
\kpar
Let $v \xrightarrow{\beta_1} \ldots \xrightarrow{\beta_l} w$ be an antisimple path in the Bruhat graph.
\kpar
Let $n'_\bullet$ be the vector of D-multiplicities of the sequence $f(\beta_1), \ldots, f(\beta_l)$ of simple roots 
($f$ is defined on $\beta_i$s since $\{\beta_1, \ldots, \beta_k\} \subseteq \Phi^+\cap w\Phi^-$ 
by Lemma \ref{antisimplelabelsisphiwphi} (\ref{antisimplepathinequality})).
\kpar
Then $c_{w,n_\bullet}\ge c_{v,n_\bullet-n'_\bullet}>0$.

In particular, if 
$c_{v,n_\bullet-n'_\bullet}\ge 2$,
then 
$c_{w,n_\bullet}\ge 2$.
\end{corollary}
\begin{proof}
By Lemma \ref{antisimplelabelsisphiwphi} (\ref{antisimplepathinequality}), 
$\Phi^+\cap w\Phi^- = (\Phi^+\cap v\Phi^-) \sqcup \{\beta_1, \ldots, \beta_l\}$.
So, it is possible to restrict of $f$ onto $\Phi^+\cap v\Phi^-$, and the vector of D-multiplicities of this restriction is $n_\bullet - n'_\bullet$.
The fact that $c_{v,n_\bullet - n'_\bullet} > 0$ 
follows from the existence of $f|_{\Phi^+\cap v\Phi^-}$ and Proposition \ref{sortabilitycriterium} ($\ref{distributionexists} \Rightarrow \ref{cgreater0}$).

For the rest of the claim,
we have an enhanced path 
$(v \xrightarrow{\beta_1} \ldots \xrightarrow{\beta_l} w, f(\beta_1), \ldots, f(\beta_l))$ 
(by the definition of a distribution of simple roots, $f(\beta_i) \in \supp \beta_i$, so this is really an enhanced path),
and the claim follows from Lemma \ref{sortingprefixmulttwopropagates}.
\end{proof}

From now on, we are going to pay particular attention to strictly excessive configurations for a while. 
We start with the following nice property.
\begin{lemma}[``free first choice'']\label{excessiveisfreechoice}
Let $w \in W$ and $n_\bullet \in \ZZ_{\ge 0}^r$ be such that the configuration $(\Phi^+ \cap w \Phi^-, n_\bullet)$
is strictly excessive.
Let $\alpha \in \Phi^+ \cap w \Phi^-$, let $\alpha_i \in \supp \alpha \cap \supp n_\bullet$.
Then there exists a distribution of simple roots $f$ on $\Phi^+ \cap w \Phi^-$ with vector of D-multiplicities $n_\bullet$
and such that $f(\alpha) = \alpha_i$.
\end{lemma}
\begin{proof}
Set $A=(\Phi^+\cap w\Phi^-)\setminus \{ \alpha \}$.
Let us check that the configuration $(A, n_\bullet - e_i)$ is 
excessive 
(but not necessarily strictly excessive; recall that $e_i$ denotes the $i$th standard basis vector of the lattice $\ZZ^r$).

Let $I\subseteq \supp (n_\bullet - e_i) \subseteq \supp n_\bullet$. 
Clearly, $|p_I (n_\bullet)|\ge |p_I(n_\bullet - e_i)|$ and $|R_I(A)|\ge |R_I(\Phi^+ \cap w \Phi^-)|-1$. 
If $I\ne \supp n_\bullet$, $I \ne \varnothing$, then 
$$
|R_I(A)|\ge |R_I(\Phi^+ \cap w \Phi^-)|-1>|p_I (n_\bullet)|-1\ge |p_I(n_\bullet - e_i)|-1.
$$
Since all number here are integers, we get
$|R_I(A)|\ge |p_I(n_\bullet - e_i)|$.
If $I= \supp n_\bullet$, then $|p_I(n_\bullet - e_i)|=|p_I (n_\bullet)|-1$, and 
$$
|R_I(A)| = |R_I(\Phi^+ \cap w \Phi^-)|-1 = |p_I (n_\bullet)|-1= |p_I(n_\bullet - e_i)|.
$$
If $I=\varnothing$, then $|R_I(A)|= |p_I(n_\bullet - e_i)|=0$.
So, for all $I\subseteq \supp (n_\bullet - e_i)$ we have $|R_I(A)|\ge |p_I(n_\bullet - e_i)|$.

For $I = \supp (n_\bullet - e_i)$ (whether it equals $\supp n_\bullet$ or not) we get 
$|R_I(A)|\ge |p_I(n_\bullet - e_i)|= |n_\bullet - e_i| = |n_\bullet| - 1 = |A|$ (the last equality follows from the strict excessiveness 
of $(\Phi^+ \cap w \Phi^-, n_\bullet)$).
But $R_I(A) \subseteq A$, so in fact $R_I(A) = A$, which completes the check that $(A, n_\bullet - e_i)$
is 
an excessive configuration.

By Lemma \ref{hallintermsofrpre}, there exists a simple root distribution $g$ on $A$ 
with vector of D-multiplicities $n_\bullet - e_i$.
Set $f(\alpha)=\alpha_i$ and $f(\beta)=g(\beta)$ for $\beta \in A$. 
\end{proof}

The following definition may look trivial for now, but later we will need it in exactly this form.
\begin{definition}
Let $(A, n_\bullet)$ be a configuration.
We say that it \emph{has large essential coordinates} if there exists
$\alpha\in A$ and $\alpha_i\in \supp n_\bullet$ 
such that $\alpha_i^*(\alpha) \ge 2$. 
Otherwise we say that $(A, n_\bullet)$ \emph{has small essential coordinates}.
\end{definition}

\begin{lemma}\label{freechoicetwo}
Let $w\in W$ and $n_\bullet \in \ZZ_{\ge 0}^r$ be such that 
the configuration $(\Phi^+ \cap w \Phi^-, n_\bullet)$ is strictly excessive and has large essential coordinates.
Then $c_{w,n_\bullet}\ge 2$.
\end{lemma}
\begin{proof}
Let
$\alpha\in \Phi^+\cap w\Phi^-$ and $\alpha_i\in \supp n_\bullet$ be
such that $\alpha_i^*(\alpha) \ge 2$. 
By Lemma \ref{excessiveisfreechoice},
there exists a 
simple root distribution $f$ on $\Phi^+\cap w\Phi^-$
with vector of D-multiplicities $n_\bullet$ and
such that $f(\alpha)=\alpha_i$.

By Corollary \ref{anydistributiongives}, there exists 
an antisimple enhanced path 
of the form $(1_W \xrightarrow{\beta_1} \ldots \xrightarrow{\beta_l} w, f(\beta_1), \ldots, f(\beta_l))$
(by Lemma \ref{antisimplelabelsisphiwphi}, $\{ \beta_1, \ldots, \beta_l \} = \Phi^+ \cap w \Phi^-$, 
so it makes sense to speak about $f(\beta_j)$).
In particular, the edge $\xrightarrow{\alpha}$, wherever it appears in the path, has enhancement $f(\alpha)=\alpha_i$.
The Z-multiplicity of this path is positive since it is enhanced, not just pre-enhanced, and is divisible by $\alpha_i^*(\alpha) \ge 2$.
The sum of Z-multiplicities of all enhanced paths from $1_W$ to $w$ with enhancement 
$f(\beta_1), \ldots, f(\beta_l)$
is therefore at least 2 and (on the other hand) equals $c_{w,n_\bullet}$ by Corollary \ref{pieriformulamultiple2}.
\end{proof}

In this lemma, it is really important to have a strictly excessive configuration.
Of course, it follows from the proof that we could replace the strict excessiveness condition 
with the existence of a simple root distribution $f$ such that $f(\alpha)=\alpha_i$.
But we cannot remove the strict excessiveness condition entirely or even replace it with 
(not necessarily strict) excessiveness, 
as we will see in Example \ref{largeessntialoutsideinvolved} later.

We conclude this subsection with a corollary about pairs of roots with scalar product $-1$.
For a necessary condition for $c_{w,n_\bullet}=1$, we could leave the ``moreover'' part only, but we will need 
the first part, without the strictly excessiveness assumption, on its own as an intermediate step a few times later.
\begin{corollary}\label{freechoiceonetwozero}
Let $w \in W$, let $n_\bullet \in \ZZ_{\ge 0}^r$.
Suppose that there exist roots $\alpha,\beta\in \Phi^+\cap w\Phi^-$ 
such that $(\alpha,\beta)=-1$
and $\supp \alpha \cap \supp \beta \cap \supp n_\bullet \ne \varnothing$.
Then the configuration $(\Phi^+ \cap w \Phi^-, n_\bullet)$ has large essential coordinates.

Moreover, if, additionally, the configuration $(\Phi^+ \cap w \Phi^-, n_\bullet)$ is strictly excessive, then $c_{w,n_\bullet} \ge 2$.
\end{corollary}
\begin{proof}
We have $(\alpha,\beta)=-1$, so $\alpha+\beta\in \Phi$ by Lemma \ref{sumexists}, and further,
$\alpha+\beta \in \Phi^+ \cap w \Phi^-$ since both $\alpha$ and $\beta$ are in $\Phi^+ \cap w \Phi^-$.
Let $\alpha_i \in \supp \alpha \cap \supp \beta \cap \supp n_\bullet$. 
Then $\alpha_i^*(\alpha) \ge 1$, $\alpha_i^*(\beta) \ge 1$, so 
$\alpha_i^*(\alpha + \beta) \ge 2$, and the configuration $(\Phi^+ \cap w \Phi^-, n_\bullet)$ has large essential coordinates.
The ``moreover'' part now follows from Lemma \ref{freechoicetwo}.
\end{proof}

\subsection{Orthogonal roots in an excessive configuration and $c_{w,n_\bullet}=1$}
Our goal in this subsection is to prove the following 
proposition about pairs of orthogonal roots in $\Phi^+ \cap w \Phi^-$.
\begin{proposition}\label{freechoiceorthogonalnooverlap}
Let $G$ be a semisimple split algebraic group with simply laced Dynkin diagram.
Let $w \in W$ and $n_\bullet \in \ZZ_{\ge 0}^r$ be such that 
$(\Phi^+ \cap w \Phi^-, n_\bullet)$ is a strictly excessive configuration.
If there exist two roots $\alpha, \beta \in \Phi^+ \cap w \Phi^-$ such that
$(\alpha, \beta)=0$ and $\supp\alpha \cap \supp\beta \cap \supp n_\bullet \ne \varnothing$,
then $c_{w,n_\bullet} \ge 2$.
\end{proposition}
%
Roughly speaking, we are going to prove this proposition by induction on $\ell(w)$ 
using Corollary \ref{antireducedsortingprefixmulttwopropagates}
for single antisimple edges.

So far, we paid most attention to antisimple edges and paths, because passing along them changes the set $\Phi^+ \cap w \Phi^-$ in a simple way.
We are still going to use antisimple paths a lot later, 
but now, for what is going to be the base of our induction, we will need a broader class of edges.
We have the following definition.
\begin{definition}
Let $w \in W$, and let $f \colon \Phi^+\cap w\Phi^- \to \Pi$ be a distribution of simple roots.
An edge $\sigma_\alpha w \xrightarrow{\alpha} w$ in the Bruhat graph
is called 
\emph{compatible with $f$}
if 
for every $\beta\in \Phi^+\cap w\Phi^-$ such that $\beta-\alpha\in \Phi^+\cap w\Phi^+$, 
we have
$f(\beta) \notin \supp\alpha$.
\end{definition}
\begin{remark}\label{antisimplecompatible}
If for some $w \in W$ we have $\alpha \in -w\Pi$, then it follows directly from Lemma \ref{antisimpleexistsadmissible} (\ref{antisimpleadmissible})
and
Lemma \ref{antisimpleisminimal} (\ref{alphaminimalsimplecrit})
that $\sigma_\alpha w \xrightarrow{\alpha} w$ is an edge in the Bruhat graph compatible with any distribution on $\Phi^+ \cap w \Phi^-$.
\end{remark}
\begin{example}\label{std4examplecompatible}
Let $G=SL_4$, $w = [3,4,1,2]$.
Then $\Phi^+ \cap w \Phi^- = \{\alpha_2, \alpha_1+\alpha_2, \alpha_2+\alpha_3, 
\alpha_1+\alpha_2+\alpha_3\}$. 
Let $f(\alpha_2) = f(\alpha_1+\alpha_2+\alpha_3) = \alpha_2$, 
$f(\alpha_1+\alpha_2) = \alpha_1$, and $f(\alpha_2+\alpha_3) = \alpha_3$.
By Example \ref{admissibleexample}, $[3,1,4,2] \xrightarrow{\alpha_2} [3,4,1,2]$ is an edge in the Bruhat graph.
Let us check that it is compatible with $f$.
Indeed, 
there are two roots $\beta \in \Phi^+\cap w\Phi^-$ such that $\beta-\alpha_2 \in \Phi^+\cap w\Phi^+$:
$\beta = \alpha_1+\alpha_2$ and $\beta=\alpha_2+\alpha_3$. For each of them, we have $f (\beta) \notin \supp \alpha_2$.

Also, by Example \ref{antisimplesln}, $\alpha_1+\alpha_2+\alpha_3 \in -w\Pi$, 
$\alpha_1+\alpha_2+\alpha_3 = -w (\alpha_2)$. So, 
$[2,4,1,3] \xrightarrow{\alpha_1 + \alpha_2 + \alpha_3} [3,4,1,2]$ is compatible with $f$ by Remark \ref{antisimplecompatible}.
\end{example}
We will need the following property of edges compatible with distributions.
\begin{lemma}\label{compatibilitycorrect}
Let $w\in W$, 
let $f$ be a simple root distribution on $\Phi^+ \cap w \Phi^-$ with vector of D-multiplicities $n_\bullet$,
let $\alpha\in \Phi^+\cap w\Phi^-$ be a root 
such that $\sigma_\alpha w \xrightarrow{\alpha} w$ is an edge 
compatible with $f$,
and let $\alpha_i = f(\alpha)$.
Then $c_{\sigma_\alpha w, n_\bullet - e_i} \ge 1$.
\end{lemma}
\begin{proof}
The definitions of an $f$-compatible root and of $\psi_{\alpha}$ imply that 
$f(\beta) \in \supp \psi_\alpha(\beta)$ for every 
$\beta \in (\Phi^+ \cap w \Phi^-) \setminus \{ \alpha \}$.
Hence, by Lemma \ref{admissibilitycrit}, 
$f|_{(\Phi^+ \cap w \Phi^-) \setminus \{ \alpha \}} \circ \psi_\alpha^{-1}$
is a simple root distribution on $\Phi^+ \cap \sigma_\alpha w \Phi^-$ with vector of D-multiplicities $n_\bullet - e_i$.
By Proposition \ref{sortabilitycriterium} (\ref{distributionexists}), $c_{\sigma_\alpha w, n_\bullet - e_i} \ge 1$.
\end{proof}
The following lemma will work as the induction base in the proof of Proposition \ref{freechoiceorthogonalnooverlap}.
\begin{lemma}\label{twodiffcompatible}
Let $w\in W$, 
let $n_\bullet \in \ZZ_{\ge 0}$, $|n_\bullet|=\ell(w)$.
Let $\alpha,\beta\in \Phi^+\cap w\Phi^-$ be two different (but not necessarily orthogonal) roots
such that $\sigma_\alpha w \xrightarrow{\alpha} w$, $\sigma_\beta w \xrightarrow{\beta} w$.

Suppose that there exist
two (not necessarily different) simple root distributions $f,g\colon \Phi^+\cap w\Phi^-\to \Pi$, 
both with vector of D-multiplicities $n_\bullet$,
such that $f(\alpha)=g(\beta)$ and
$\sigma_\alpha w \xrightarrow{\alpha} w$ (resp.\ $\sigma_\beta w \xrightarrow{\beta} w$) is 
compatible with $f$ (resp.\ with $g$).
Then $c_{w,n_\bullet}\ge 2$.
\end{lemma}
\begin{proof}
Denote $\alpha_i=f(\alpha)=g(\beta)$.
By Lemma \ref{compatibilitycorrect}, we have $c_{\sigma_\alpha w, n_\bullet - e_i} \ge 1$ and $c_{\sigma_\beta w, n_\bullet - e_i} \ge 1$.
By the definition of a distribution, we have $\alpha_i^*(\alpha) > 0$, $\alpha_i^*(\beta)>0$. 
The claim follows from Corollary \ref{pieriformulacwnmultiple}.
\end{proof}
\begin{example}\label{std4example}
Let $G$, $w$, and $f$ be as in Example \ref{std4examplecompatible}, and let $n_\bullet=(1,2,1)$ be the vector of D-multiplicities of $f$. 
Then, on one hand, Lemma \ref{twodiffcompatible} applied to the two roots $\alpha_2$ and $\alpha_1+\alpha_2+\alpha_3$ mentioned in 
Example \ref{std4examplecompatible} shows that $c_{w,n_\bullet} \ge 2$ (in fact, $c_{w,n_\bullet} = 2$, but this requires more analysis).

On the other hand, there is only one antisimple edge ending at $w$: $w' := [2,4,1,3] \to [3,4,1,2]$.
The label at this edge is $\alpha_1 + \alpha_2 + \alpha_3$, so 
it even becomes an enhanced path if we choose any of the three simple roots $\alpha_1$, $\alpha_2$, or $\alpha_3$ as the enhancement.
But unfortunately, one can check that $c_{w', 0,2,1}=c_{w',1,1,1} = c_{w',1,2,0}=1$.
From here, it is easy to see that whatever sequence $L$ with vector of D-multiplicities $n_\bullet$ we try to use as the enhancement
for paths from $1_W$ to $w$, we will not get more than one enhanced (and not just pre-enhanced) \emph{antisimple} path 
with enhancement $L$. In other words, antisimple enhanced paths are not enough to prove that $c_{w,n_\bullet}\ge 2$ in this case,
we really need Lemma \ref{twodiffcompatible} and compatible edges.
\end{example}
For the induction step in the proof of Proposition \ref{freechoiceorthogonalnooverlap}, we will need a few preliminary technical lemmas.
\begin{lemma}\label{compatibleisminimalp}
Let $w\in W$, $\alpha\in \Phi^+\cap w\Phi^-$.
Let $f$ be simple root distribution 
on $\Phi^+\cap w\Phi^-$.
Then at least one of the following is true:
\begin{enumerate}
\item\label{compatibleisminimalpcomp} We have $\sigma_\alpha w \xrightarrow{\alpha} w$, and this edge is compatible with $f$.
\item\label{compatibleisminimalpmin} There exists $\beta\in \Phi^+\cap w\Phi^-$ 
such that $\alpha \prec_w\beta$, $(\alpha,\beta)=1$, $f(\beta)\in\supp \alpha$, 
and $f(\alpha)\in\supp \beta$.
\end{enumerate}
\end{lemma}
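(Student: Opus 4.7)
The plan is to reduce to Lemma \ref{compatibleisminimal}, which gives the equivalent characterization that $\alpha$-compatibility is the statement ``for every $\beta\in\Phi^+\cap w\Phi^-$ with $\alpha\prec_w\beta$ and $(\alpha,\beta)=1$, one has $f(\beta)\notin\supp\alpha$''. Condition \ref{compatibleisminimalpmin} is a formally weaker statement: it only forbids those $\beta$ that \emph{additionally} satisfy $f(\alpha)\in\supp\beta$. So the direction $\ref{compatibleisminimalpcomp}\Rightarrow\ref{compatibleisminimalpmin}$ is immediate, since the strong condition from Lemma \ref{compatibleisminimal} forbids $f(\beta)\in\supp\alpha$ outright, hence forbids any simultaneous realization of the two conjuncts in \ref{compatibleisminimalpmin}.

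The substantive direction is $\ref{compatibleisminimalpmin}\Rightarrow\ref{compatibleisminimalpcomp}$. Assume \ref{compatibleisminimalpmin}. I would verify the definition of $\alpha$-compatibility in two steps. First, I establish admissibility of $\sigma_\alpha$. Suppose for contradiction that $\sigma_\alpha$ is not admissible, so by Lemma \ref{admissibilitycrit} we can write $\alpha=\gamma+\delta$ with $\gamma,\delta\in\Phi^+\cap w\Phi^-$. Routine scalar product computations give $(\alpha,\gamma)=(\alpha,\delta)=1$, and since $\delta=\alpha-\gamma\in w\Phi^-$ we get $\gamma-\alpha\in w\Phi^+$, i.e.\ $\alpha\prec_w\gamma$; symmetrically $\alpha\prec_w\delta$. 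Both $\gamma,\delta\prec\alpha$ in the standard order, so $\supp\gamma,\supp\delta\subseteq\supp\alpha$, hence $f(\gamma),f(\delta)\in\supp\alpha$.

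The decisive observation is that since $\alpha=\gamma+\delta$ has nonnegative coefficients, $\supp\alpha=\supp\gamma\cup\supp\delta$, and $f(\alpha)\in\supp\alpha$ forces $f(\alpha)$ to lie in $\supp\gamma$ or in $\supp\delta$. In either case the corresponding root ($\gamma$ or $\delta$) witnesses a failure of \ref{compatibleisminimalpmin}, contradiction. Second, I verify the $\beta$-clause in the definition of $\alpha$-compatibility. Take $\beta\in\Phi^+\cap w\Phi^-$ with $\alpha\prec\beta$, $(\alpha,\beta)=1$, and $\beta-\alpha\notin\Phi^+\cap w\Phi^-$. By Lemma \ref{sumexists}, $\beta-\alpha\in\Phi$; combined with $\alpha\prec\beta$ we have $\beta-\alpha\in\Phi^+$, and then $\beta-\alpha\notin w\Phi^-$ gives $\beta-\alpha\in w\Phi^+$, hence $\alpha\prec_w\beta$. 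Since $\alpha\prec\beta$, we have $\supp\alpha\subseteq\supp\beta$, in particular $f(\alpha)\in\supp\beta$. If one had $f(\beta)\in\supp\alpha$, then $\beta$ would witness a failure of \ref{compatibleisminimalpmin}; therefore $f(\beta)\notin\supp\alpha$, as required.

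The main obstacle is the admissibility step: condition \ref{compatibleisminimalpmin} is weaker than the condition in Lemma \ref{compatibleisminimal}, so one must squeeze out admissibility using the subtler joint constraint ``$f(\beta)\in\supp\alpha$ \emph{and} $f(\alpha)\in\supp\beta$''. The trick that makes this work is the decomposition identity $\supp\alpha=\supp\gamma\cup\supp\delta$, which forces $f(\alpha)$ to be caught on at least one side of the splitting $\alpha=\gamma+\delta$.
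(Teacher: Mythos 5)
Your proof is correct and follows essentially the same route as the paper's. The one noteworthy difference is the direction $\ref{compatibleisminimalpcomp}\Rightarrow\ref{compatibleisminimalpmin}$: you derive it by citing Lemma \ref{compatibleisminimal} and observing that condition \ref{compatibleisminimalpmin} is the logically weaker constraint (it forbids only those $\beta$ that also satisfy $f(\alpha)\in\supp\beta$), whereas the paper re-runs the case analysis on $\gamma=\alpha-\beta$ directly. Your reduction is slightly cleaner and makes the relationship between the two lemmas explicit. The substantive direction $\ref{compatibleisminimalpmin}\Rightarrow\ref{compatibleisminimalpcomp}$ matches the paper step for step: admissibility of $\sigma_\alpha$ is forced by the decomposition $\supp\alpha=\supp\gamma\cup\supp\delta$ (the paper packages this as a WLOG, you as a disjunction, which is the same thing), and the $\beta$-clause follows by the same chain $\alpha\prec\beta\Rightarrow\supp\alpha\subseteq\supp\beta\Rightarrow f(\alpha)\in\supp\beta$, after which condition \ref{compatibleisminimalpmin} delivers $f(\beta)\notin\supp\alpha$.
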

\begin{proof}
Suppose that condition \ref{compatibleisminimalpcomp} fails, let us prove condition \ref{compatibleisminimalpmin}.

First, suppose that there is no edge $\sigma_\alpha w \xrightarrow{\alpha} w$ in the Bruhat graph at all.
By Lemma \ref{admissibilitycrit},
there exist $\beta,\gamma\in \Phi^+\cap w\Phi^-$ such that 
$\beta+\gamma=\alpha$. We have $\alpha,\beta,\gamma\in \Phi^+$, so $\supp \alpha=\supp\beta \cup \supp\gamma$.
We also have
$f(\alpha)\in \supp\alpha$, so we may assume without loss of generality (after a possible interchange of $\beta$ and $\gamma$)
that $f(\alpha)\in\supp \beta$.

By Lemma \ref{sumexists}, 
$(\alpha,\beta)=1$. Since $\gamma=\alpha-\beta \in \Phi^+ \cap w\Phi^-$, 
we get $\alpha\prec_w\beta$ and $\beta\prec\alpha$. In particular, 
$\supp \beta\subseteq\supp \alpha$.
We also have $f(\beta)\in \supp\beta$, so $f(\beta)\in \supp \alpha$, and condition \ref{compatibleisminimalpmin} holds.

Now suppose that $\sigma_\alpha w \xrightarrow{\alpha} w$, but this edge is not compatible with $f$.
By definition, this means that there exists $\beta \in \Phi^+ \cap w \Phi^-$ such that 
$\beta - \alpha \in \Phi^+ \cap w \Phi^+$ and $f(\beta) \in \supp \alpha$.
It follows from $\beta - \alpha \in \Phi^+ \cap w \Phi^+$ that 
$\alpha\prec_w\beta$ and $\alpha \prec \beta$ (in particular, $\supp \alpha \subseteq \supp \beta$).
We have $f(\alpha) \in \supp \alpha$, so $f(\alpha) \in \supp \beta$.
Finally, by Lemma \ref{sumexists}, $(\alpha,\beta)=1$, and condition \ref{compatibleisminimalpmin} holds.
\end{proof}
In fact, conditions \ref{compatibleisminimalpcomp} and \ref{compatibleisminimalpmin} in this lemma are mutually exclusive, 
in other words, the failure of condition \ref{compatibleisminimalpmin} implies compatibility, but we will not need this.
\begin{lemma}\label{uniqueminimalsupportcovers}
Let $w\in W$. Suppose that $\Phi^+\cap w\Phi^-$ 
contains exactly one root $\alpha$ such that $w^{-1}\alpha\in -\Pi$.
Then for every $\beta\in \Phi^+\cap w\Phi^-$, 
we have $\supp \beta\subseteq \supp \alpha$.
\end{lemma}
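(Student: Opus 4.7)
The plan is to argue by contradiction using Corollary \ref{antisimplesuppexists}, which will do essentially all of the work. Suppose there existed $\beta \in \Phi^+ \cap w\Phi^-$ with some simple root $\alpha_i \in \supp \beta \setminus \supp \alpha$. Then in particular $\beta$ witnesses the existence of \emph{some} root in $\Phi^+\cap w\Phi^-$ whose support contains $\alpha_i$, so the hypothesis of Corollary \ref{antisimplesuppexists} is met for this $\alpha_i$.

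Applying Corollary \ref{antisimplesuppexists}, I obtain a root $\gamma \in \Phi^+\cap w\Phi^-$ with $\alpha_i \in \supp \gamma$ such that $\sigma_\gamma$ is an antisimple sorting reflection, i.e.\ $w^{-1}\gamma \in -\Pi$. But by the lemma's hypothesis, $\alpha$ is the \emph{only} root in $\Phi^+\cap w\Phi^-$ whose image under $w^{-1}$ lies in $-\Pi$. Hence $\gamma = \alpha$, which forces $\alpha_i \in \supp \alpha$, contradicting the choice of $\alpha_i$.

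The only step of substance is invoking Corollary \ref{antisimplesuppexists}; everything else is bookkeeping. There is no real obstacle, since the corollary was set up precisely to produce an antisimple sorting reflection whose root contains any prescribed simple root that appears somewhere in $\Phi^+ \cap w\Phi^-$, and the uniqueness hypothesis then collapses the construction onto $\alpha$ itself.
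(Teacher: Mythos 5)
Your proof is correct, and it takes a genuinely different route from the paper. The paper gives a direct coefficient computation: it writes $w^{-1}\alpha=-\alpha_i$ and $w^{-1}\beta=-\sum_j a_j\alpha_j$, observes that the uniqueness hypothesis forces $w(-\alpha_j)\in\Phi^-$ for every $j\ne i$, and then deduces termwise that the coefficients of $\beta$ in the simple-root basis are bounded by those of $w(a_i\alpha_i)$, whence $\supp\beta\subseteq\supp\alpha$. Your argument instead outsources the work to Corollary \ref{antisimplesuppexists}: assuming $\alpha_i\in\supp\beta\setminus\supp\alpha$, the corollary produces an antisimple sorting reflection $\sigma_\gamma$ with $\alpha_i\in\supp\gamma$, and the uniqueness hypothesis collapses $\gamma$ onto $\alpha$, contradicting $\alpha_i\notin\supp\alpha$. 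Your version is shorter and more conceptual, leaning on the machinery developed in the Sorting section (and there is no circularity, since Corollary \ref{antisimplesuppexists} is established well before this point); the paper's version is more elementary and self-contained, requiring only the definition of support and the sign dichotomy for roots. Both are sound.
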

\begin{proof}
%
%
%
%
Fix $\beta\in \Phi^+\cap w\Phi^-$.
Denote $w^{-1}\alpha=-\alpha_i$ and $w^{-1}\beta=-\sum_{1 \le j \le r} a_j\alpha_j$ (here, $a_j \ge 0$).

We have $w (-\alpha_i) \in \Phi^+$ and, by the uniqueness of $\alpha$, $w (-\alpha_j) \in \Phi^-$ for $j \ne i$. 
We can write 
\begin{equation*}
\beta = a_i w(-\alpha_i) + \Big( \sum_{j \ne i} a_j w (-\alpha_j) \Big),
\end{equation*}
then 
$\beta \preceq a_i w(-\alpha_i) = a_i \alpha$.
Therefore, $\supp \beta\subseteq \supp (a_i\alpha)=\supp \alpha$.
\end{proof}

\begin{lemma}\label{rhombuspreconstruction}
Let $w \in W$ and $n_\bullet \in \ZZ_{\ge 0}^r$ be such that $\ell(w)=|n_\bullet|$ and the configuration 
$(\Phi^+ \cap w \Phi^-, n_\bullet)$ has small essential coordinates. Let $\alpha \in \Phi^+ \cap w \Phi^-$, 
let $\alpha_i \in \supp \alpha \cap \supp n_\bullet$.
Suppose that the set $A=\{ \beta' \in \Phi^+\cap w\Phi^- : (\alpha, \beta') = 0, \alpha_i \in \supp \beta' \}$
is nonempty, and let $\beta$ be a $\prec_w$-maximal element of $A$. 
Finally, suppose that we have $\gamma \in \Phi^+ \cap w \Phi^-$ 
such that $(\gamma,\beta)=1$, $\alpha_i \in \supp \gamma$, and $\beta \prec_w \gamma$.

Then $(\alpha, \gamma)=1$.
\end{lemma}
\begin{proof}
First, we cannot have $(\alpha, \gamma)=0$, this would contradict the $\prec_w$-maximality of $\beta$.
Second, we have $\alpha_i \in \supp \alpha \cap \supp \gamma \supp n_\bullet$, so, by Corollary \ref{freechoiceonetwozero}, 
the equality $(\alpha, \gamma) = -1$ would contradict the assumption of small essential coordinates.
Third, we have 
$(\beta,\gamma)=1$
and
$(\beta,\alpha)=0$, so $\gamma\ne \alpha$ and $(\alpha, \gamma) \ne 2$. 
Finally, both $\alpha$ and $\gamma$ are positive roots, so 
$(\alpha, \gamma) \ne -2$. The only remaining possibility is $(\alpha, \gamma)=1$.
\end{proof}

\begin{lemma}\label{rhombusroot}
Let $\alpha,\beta,\gamma\in \Phi$ be such that 
$(\alpha,\beta)=0$,
$(\beta,\gamma)=1$,
$(\alpha,\gamma)=1$. Denote $\delta=\alpha-\gamma+\beta$. Then:
\begin{enumerate}
\item\label{rhombusrootexists} We have $\delta \in \Phi$,
and $(\alpha,\delta)=1$,
$(\beta,\delta)=1$,
$(\gamma,\delta)=0$.
\item\label{rhombuspositive} If there exists a simple root $\alpha_i$ 
such that $\alpha_i^*(\alpha)=\alpha_i^*(\beta)=\alpha_i^*(\gamma)=1$, 
then $\alpha_i^*(\delta)=1$ and $\delta \in \Phi^+$.
\item\label{rhombusinitinegative}
If ($\alpha\prec_w \gamma$ and $\beta \in w\Phi^-$) or ($\beta\prec_w \gamma$ and $\alpha \in w\Phi^-$),
then $\delta \in w\Phi^-$.
\end{enumerate}
\end{lemma}
\begin{proof}
Direct computation of scalar products. To see that $\delta \in \Phi$, we use Lemma \ref{sumexists}.
\end{proof}
\begin{sideremark}
In this lemma, we really need a group $G$ of type $A$, $D$, or $E$.
If, for example, $G$ were of type $B_2$, this lemma or its analogue doesn't work, as the following example shows.

Let $\Phi$ be a root system of type $B_2$. Recall that we enumerate simple roots as in \cite{bou}, so $\alpha_1$ is a long root,
and $\alpha_2$ is a short root. Set $\alpha = \alpha_1 + \alpha_2$, $\beta=\alpha_2$, $\gamma=\alpha_1+2\alpha_2$.
Then $(\alpha, \beta) = 0$, 
$(\alpha, \gamma) > 0$ (and even $(\alpha, \gamma)=1$ if we choose the scalar product so that the lengths of all short roots equal 1), 
and 
$(\beta, \gamma) > 0$. But $\delta = \alpha+\beta - \gamma = 0 \notin \Phi$.

Even if we replace the formula for $\delta$ with $\delta = - \sigma_\beta \sigma_\alpha \gamma$
(for $G$ of type $A$, $D$, or $E$, this equals $\alpha+\beta -\gamma$), then 
we will of course get $\delta \in \Phi$, but in fact
(in the same example of type $B_2$), we will get $\delta=\gamma$, so, for example, the formula $(\gamma, \delta)=0$ still fails.

And later we will see that the whole Proposition \ref{freechoiceorthogonalnooverlap} is also wrong in type $B_2$, see Example \ref{b2orthogonalexample}.
\end{sideremark}
As we are going to use Corollary \ref{antireducedsortingprefixmulttwopropagates} in the induction step, 
we will need distributions of simple roots, not just configurations.
The following definition explains a property of such a distribution we are going to maintain.
\begin{definition}
Let $w\in W$.
We 
say that
a 
simple root distribution $f$ on $\Phi^+\cap w\Phi^-$
is
\emph{flexible}
if
there exist roots $\alpha,\beta\in \Phi^+\cap w\Phi^-$
such that $(\alpha,\beta)=0$, $f(\beta)\in \supp\alpha$, and $f(\alpha)\in \supp \beta$.
\end{definition}
Sometimes we will also need to modify the distribution of simple roots before we can do an induction step.
In the following lemma, we explain how to do such a modification.
Or, it can also turn out during our modification attempts 
that everything reduces to the induction base and we can conclude that $c_{w, n_\bullet} \ge 2$ right away.
\begin{lemma}\label{freechoiceorthogonalpre}
Let $w\in W$ and $n_\bullet \in \ZZ_{\ge 0}$ be such that $|n_\bullet|=\ell(w)$ and
the configuration $(\Phi^+ \cap w \Phi^-, n_\bullet)$ has small essential coordinates.
Suppose that $\Phi^+\cap w\Phi^-$ 
contains exactly one root $\alpha$ such that $w^{-1}\alpha\in -\Pi$.
Suppose that there exist a simple root distribution $f\colon \Phi^+\cap w\Phi^-\to \Pi$ 
with vector of D-multiplicities $n_\bullet$ 
and a root $\beta\in \Phi^+\cap w\Phi^-$ such that
$(\alpha,\beta)=0$ and $f(\alpha)\in \supp\beta$.
Then at least one of the following is true:
\begin{enumerate}
\item $c_{w,n_\bullet}\ge 2$.
\item 
One can define a 
simple root distribution $g\colon \Phi^+\cap w\Phi^-\to \Pi$ 
with vector of D-multiplicities $n_\bullet$
so that the restriction of $g$ to 
$\Phi^+\cap (\sigma_{\alpha}w)\Phi^-=(\Phi^+\cap w\Phi^-)\setminus \{ \alpha \}$ 
is flexible.
\end{enumerate}
\end{lemma}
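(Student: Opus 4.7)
The plan is to mirror the structure of the proof of Lemma~\ref{assignedrootorthogonalpre}, adapted to the weaker hypothesis $f(\alpha)\in\supp\beta$ and the different target of flexibility of the restricted distribution. The key preliminary observation is Lemma~\ref{uniqueminimalsupportcovers}: since $\alpha$ is the unique antisimple root, $\supp\gamma\subseteq\supp\alpha$ for every $\gamma\in\Phi^+\cap w\Phi^-$; in particular $f(\beta)\in\supp\alpha$ holds automatically, so the orthogonal pair $(\alpha,\beta)$ under $f$ already has the cross-support structure of a flexible pair---only the exclusion of $\alpha$ stands in the way of the desired conclusion. After a preliminary reduction I may assume $\beta$ is $\prec_w$-maximal among all $\gamma\in\Phi^+\cap w\Phi^-$ with $\gamma\ne\alpha$, $(\alpha,\gamma)=0$, and $f'(\alpha)\in\supp\gamma$ for some distribution $f'$ with the prescribed D-multiplicities (informally, red candidates).

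The proof then splits on whether $f$ is $\beta$-compatible. In the compatible case, I define $f'$ by swapping the labels of $\alpha$ and $\beta$; this is a valid distribution with the same D-multiplicities because $f(\alpha)\in\supp\beta$ and $f(\beta)\in\supp\alpha$. Since the $\beta$-compatibility condition (in the form of Lemma~\ref{compatibleisminimal}) only constrains $f$-values at roots $\gamma\ne\beta$ with $\beta\prec_w\gamma$ and $(\beta,\gamma)=1$, and the orthogonality $(\alpha,\beta)=0$ excludes $\alpha$ from this constrained set, $f'$ remains $\beta$-compatible. By Lemma~\ref{antisimpleiscompatible} $f$ itself is $\alpha$-compatible, so Lemma~\ref{twodiffcompatible} applied to $f$ and $f'$ at the pair $(\alpha,\beta)$ with $f(\alpha)=f'(\beta)$ yields $C_{w,n_1,\ldots,n_r}\ge 2$.

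In the incompatible case, Lemma~\ref{compatibleisminimalp} provides $\delta\in\Phi^+\cap w\Phi^-$ with $\beta\prec_w\delta$, $(\beta,\delta)=1$, $f(\delta)\in\supp\beta$, $f(\beta)\in\supp\delta$, and $\delta\ne\alpha$. The small-essential-coords hypothesis excludes $(\alpha,\delta)=-1$, for otherwise $\alpha+\delta\in\Phi^+\cap w\Phi^-$ by Lemma~\ref{additionwdeltaminus}, and the involved simple root $f(\delta)\in\supp\beta\subseteq\supp\alpha$ would have coefficient $\ge 2$ in $\alpha+\delta$. The subcase $(\alpha,\delta)=0$ is ruled out by $\prec_w$-maximality of $\beta$: the distribution obtained from $f$ by the cyclic reassignment $f'(\alpha)=f(\delta)$, $f'(\beta)=f(\alpha)$, $f'(\delta)=f(\beta)$ witnesses $\delta$ as a red candidate dominating $\beta$. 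Hence $(\alpha,\delta)=1$. Lemma~\ref{rhombusroot} yields $\beta'=\alpha-\delta+\beta\in\Phi$ with $(\beta',\delta)=0$, $(\beta',\alpha)=(\beta',\beta)=1$; Lemma~\ref{rhombuspositive} with the common simple root $f(\beta)\in\supp\alpha\cap\supp\beta\cap\supp\delta$ gives $\beta'\in\Phi^+$ and $f(\beta)\in\supp\beta'$; and Lemma~\ref{rhombusinitinegative} gives $\beta'\in w\Phi^-$. I define $g$ by the cycle $g(\alpha)=f(\beta')$, $g(\beta)=f(\delta)$, $g(\delta)=f(\beta)$, $g(\beta')=f(\alpha)$, agreeing with $f$ elsewhere. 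All four reassignments land in the correct supports: the first three by Lemma~\ref{uniqueminimalsupportcovers} and the choice of $\delta$, and the fourth because small essential coordinates forces $f(\alpha)\in\supp\delta$ (otherwise $\beta'$ would have coefficient $2$ at the involved simple root $f(\alpha)$), making the coefficient of $f(\alpha)$ in $\beta'$ equal to $1-1+1=1$. D-multiplicities are preserved as a cyclic permutation of labels, and the orthogonal pair $(\beta',\delta)\subseteq(\Phi^+\cap w\Phi^-)\setminus\{\alpha\}$ satisfies $g(\beta')=f(\alpha)\in\supp\delta$ and $g(\delta)=f(\beta)\in\supp\beta'$, which is the required flexibility.

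The main technical obstacle is the rhombus step of the incompatible case: verifying simultaneously that all four cyclic reassignments are valid distribution values requires combining Lemma~\ref{uniqueminimalsupportcovers}, the hypotheses on $\delta$ from Lemma~\ref{compatibleisminimalp}, Lemma~\ref{rhombuspositive} applied with the common simple root $f(\beta)$, and the small-essential-coords hypothesis used once more to force $f(\alpha)\in\supp\delta$.
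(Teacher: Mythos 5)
Your proof is correct and follows the same high-level structure as the paper's (split on compatibility, rhombus construction in the incompatible case), but the technical route in the incompatible case is genuinely different, and worth comparing.

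The paper applies Lemma~\ref{compatibleisminimalp} to the swapped distribution $h$ (with $h(\alpha)=f(\beta)$, $h(\beta)=f(\alpha)$), which produces a root $\gamma$ with $f(\alpha)\in\supp\gamma$ and $f(\gamma)\in\supp\beta$. Because $f(\alpha)\in\supp\gamma$ is immediate, $\gamma$ is automatically ``red,'' and the paper then proves a dichotomy ($f(\gamma)\in\supp\delta$ or $f(\delta)\in\supp\gamma$) before finishing with a three-case construction of $g$. You instead apply Lemma~\ref{compatibleisminimalp} to $f$ itself, which gives a root $\delta$ with $f(\delta)\in\supp\beta$ and $f(\beta)\in\supp\delta$; the tradeoff is that $f(\alpha)\in\supp\delta$ is not automatic, so you must derive it (correctly) by applying the small-essential-coordinates hypothesis to the rhombus root $\beta'=\alpha-\delta+\beta$ after first using $f(\beta)$ (rather than the paper's $f(\alpha)$) as the common simple root in Lemma~\ref{rhombuspositive}. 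The payoff is that all four needed support relations become available simultaneously, letting you produce $g$ in a single step as two transpositions $\{\alpha\leftrightarrow\beta'\}$ and $\{\beta\leftrightarrow\delta\}$, with no case analysis. In short: the paper does more work at the end (three cases), you do more work in the middle (extra coefficient argument), and both reach the same flexible pair.

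Two small presentational points. First, you call your final assignment a ``cycle,'' but it is actually the product of two disjoint transpositions; this is harmless but slightly misnamed. Second, your notion of ``red candidate'' quantifies over all distributions $f'$ with the prescribed D-multiplicities, whereas the paper fixes $f$ throughout and uses $f(\alpha)\in\supp\gamma$. Your more permissive set is still finite and still admits a $\prec_w$-maximal element containing $\beta$, and the cyclic-reassignment witness you exhibit for $\delta$ does lie in your set, so the maximality contradiction goes through. It is not a gap, but you should be aware that the quantifier over distributions makes the WLOG slightly heavier to verify (the pair $(\beta,f)$ may change together), and the paper's formulation avoids this by keeping $f$ fixed.

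Also note that your application of Lemma~\ref{rhombuspositive} implicitly uses that the coefficient at $f(\beta)$ in each of $\alpha,\beta,\delta$ is exactly $1$, not merely nonzero; this follows from the small-essential-coordinates hypothesis since $f(\beta)$ is an involved simple root, but it would be worth stating explicitly.
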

\begin{proof}
First, without loss of generality we may suppose that $\beta$ is a $\prec_w$-maximal
root satisfying the preconditions of the lemma. 
In other words, we will suppose that $\beta$ is a $\prec_w$-maximal
element of the set 
$\{ \beta' \in \Phi^+\cap w\Phi^- : (\alpha, \beta') = 0, f(\alpha) \in \supp \beta' \}$.
Also, denote $\alpha_i=f(\alpha)$.
By Lemma \ref{uniqueminimalsupportcovers}, 
$\supp \beta\subseteq\supp \alpha$, so $f(\beta)\in \supp\alpha$. 
Consider the following function $h \colon \Phi^+\cap w\Phi^- \to \Pi$:
\begin{align*}
&h(\alpha)=f(\beta),\\
&h(\beta)=f(\alpha) = \alpha_i,\\ 
&h(\gamma)=f(\gamma) \text{ for all other }\gamma\in \Phi^+\cap w\Phi^-
\end{align*}
Since $f(\alpha)\in \supp\beta$ and (as we suppose from the very beginning) $f(\beta)\in \supp\alpha$, this function $h$ is really a 
simple root distribution. Clearly, its also has 
vector of D-multiplicities $n_\bullet$.
By Remark \ref{antisimplecompatible}, $\sigma_\alpha w \xrightarrow{\alpha} w$ is an $f$-compatible edge.
If there is an $h$-compatible edge $\sigma_\beta w \xrightarrow{\beta} w$ in the Bruhat graph, 
then by Lemma \ref{twodiffcompatible}, $c_{w,n_\bullet}\ge 2$, and we are done.

In what follows, we suppose that there is no 
$h$-compatible edge $\sigma_\beta w \xrightarrow{\beta} w$ in the Bruhat graph.
Then, by Lemma \ref{compatibleisminimalp}, 
there exists a root $\gamma\in \Phi^+\cap w\Phi^-$
such that $\beta\prec_w\gamma$, $(\beta,\gamma)=1$, $h(\gamma)\in \supp\beta$,
and $h(\beta)\in \supp \gamma$. 
We will not need the distribution $h$ anymore, and in terms of $f$, we have
$f(\gamma) \in \supp \beta$, $f(\alpha) = \alpha_i \in \supp \gamma$.

Let us apply Lemma \ref{rhombuspreconstruction}. The only condition in its statement we haven't explicitly checked yet is 
that $\alpha_i \in \supp n_\bullet$. 
But $n_\bullet$ is the vector of D-multiplicities of $f$, and $f(\alpha)=\alpha_i$, so $n_i > 0$, 
and $\alpha_i \in \supp n_\bullet$. 
So, by Lemma \ref{rhombuspreconstruction}, $(\alpha, \gamma)=1$.

Next, we are going to apply Lemma \ref{rhombusroot}.
Recall that $(\alpha,\beta)=0$, $(\beta, \gamma)=1$.
We also have 
$\alpha_i \in \supp \alpha$,
$\alpha_i \in \supp \beta$, $\alpha_i \in \supp \gamma$, and $\alpha_i \in \supp n_\bullet$, 
so the small essential coordinates assumption implies that 
$\alpha_i^*(\alpha)=\alpha_i^*(\beta)=\alpha_i^*(\gamma)=1$.
Finally, we have $\beta \prec_w \gamma$ and $\alpha \in w\Phi^-$.
By Lemma \ref{rhombusroot}, 
we get $\delta:=\alpha-\gamma+\beta \in \Phi^+ \cap w \Phi^-$, $(\delta,\gamma)=0$, 
$(\delta,\alpha)=1$ (in particular, $\delta \ne \alpha$), 
$(\delta,\beta)=1$, and $\alpha_i^*(\delta) = 1$ (in particular, $\alpha_i \in \supp \delta$).

Since $(\delta,\beta)=1$, the roots $\beta$ and $\delta$ are $\prec$-comparable. Let us consider 2 cases: 
$\beta \prec \delta$ and $\delta \prec \beta$. Also, let us note immediately that by Lemma \ref{uniqueminimalsupportcovers}, 
$\supp \gamma \subseteq\supp \alpha$ and $\supp \delta \subseteq\supp \alpha$.
\begin{enumerate}
\item $\beta \prec \delta$. 
Set $g(\delta)=f(\alpha)=\alpha_i$, $g(\alpha)=f(\delta)$, and 
$g(\varepsilon)=f(\varepsilon)$ for 
all other $\varepsilon \in \Phi^+\cap w\Phi^-$.
Recall that $\alpha_i \in \supp \delta$. We also have $f(\delta) \in \supp \delta \subseteq \supp \alpha$.
So, $g$ is really a distribution of simple roots.
Its vector of D-multiplicities is clearly $n_\bullet$.

Recall that $g(\gamma)=f(\gamma) \in \supp \beta$. We have $\beta \prec \delta$, so $\supp \beta \subseteq \supp \delta$, 
hence $g(\gamma) \in \supp \delta$. Also, recall that $g(\delta) = \alpha_i \in \supp \gamma$. 
So, the restriction of $g$ to $(\Phi^+\cap w\Phi^-)\setminus \{ \alpha \}$ is flexible.

\item $\delta \prec \beta$. 
Then we set $g(\gamma)=f(\alpha)=\alpha_i$, $g(\alpha)=f(\gamma)$, and 
$g(\varepsilon)=f(\varepsilon)$ for 
all other $\varepsilon \in \Phi^+\cap w\Phi^-$.
Now we have $\alpha_i \in \supp \gamma$, and also 
$f(\gamma) \in \supp \gamma \subseteq \supp \alpha$.
So, $g$ is again a distribution of simple roots with 
vector of D-multiplicities $n_\bullet$.

For the flexibility, note that $\gamma = \alpha + \beta - \delta \succ \alpha$, and $\supp \alpha \subseteq \supp \gamma$.
We also have $g(\delta)=f(\delta) \in \supp \delta$ and $\supp \delta \subseteq \supp \alpha$. Hence, $g(\delta) \in \supp \gamma$.
Also, recall that $g(\gamma)=\alpha_i \in \supp \delta$.
Again, the restriction of $g$ to $(\Phi^+\cap w\Phi^-)\setminus \{ \alpha \}$ is flexible.
\end{enumerate}
\end{proof}
\begin{example}
Let $G=SL_4$, $w = [3,4,1,2]$ (like in Example \ref{std4examplecompatible}), 
$\alpha=\alpha_1+\alpha_2+\alpha_3$, $\beta=\alpha_2$. This time, let $f(\alpha_1+\alpha_2+\alpha_3)=f(\alpha_2+\alpha_3)=f(\alpha_2)=\alpha_2$, 
$f(\alpha_1+\alpha_2)=\alpha_1$.

Then one checks directly that $\alpha$ is the only element of $\Phi^+ \cap w (-\Pi)$, and Lemma \ref{freechoiceorthogonalpre}
can be applied. In the proof we have just seen, we get $h=f$, and 
this time, $[3,1,4,2] \xrightarrow{\beta} [3,4,1,2]$ is not an $h$-compatible edge.
Following the proof further, we get $\gamma=\alpha_2+\alpha_3$, $\delta=\alpha_1+\alpha_2$, 
and $g(\alpha_1+\alpha_2)=g(\alpha_2+\alpha_3)=g(\alpha_2)=\alpha_2$, $g(\alpha_1+\alpha_2+\alpha_3)=\alpha_1$.
The restriction of $g$ to $(\Phi^+\cap w\Phi^-)\setminus \{ \alpha \}$ is indeed flexible:
$g(\gamma)=\alpha_2 \in \supp \delta$, $g(\delta)=\alpha_2 \in \supp \gamma$.
\end{example}

Now we are ready to do the induction.
\begin{lemma}\label{freechoiceflexible}
Let $w\in W$ and $n_\bullet \in \ZZ_{\ge 0}$ be such that $|n_\bullet|=\ell(w)$ and
the configuration $(\Phi^+ \cap w \Phi^-, n_\bullet)$ has small essential coordinates.
If there exists a flexible simple root distribution $f\colon \Phi^+\cap w\Phi^-\to \Pi$ 
with vector of D-multiplicities $n_\bullet$,
then $c_{w,n_\bullet}\ge 2$.
\end{lemma}
\begin{proof}
We do induction on $\ell(w)$.

First, let us consider the case when, for the definition of a flexible distribution, 
we can find two roots corresponding to \emph{antisimple} edges. Precisely, 
suppose that there exist two roots $\alpha,\beta \in \Phi^+\cap w \Phi^-$ such that
$w^{-1} \alpha,w^{-1} \beta \in -\Pi$, $f(\alpha)\in \supp \beta$, $f(\beta)\in \supp \alpha$, 
and $(\alpha,\beta)=0$.

Then we can consider (possibly) another simple roots distribution $g$ on $\Phi^+\cap w\Phi^-$:
$g(\alpha)=f(\beta)$, $g(\beta)=f(\alpha)$, and $g(\gamma)=f(\gamma)$ for all other $\gamma\in \Phi^+\cap w\Phi^-$.
Clearly, it also has 
vector of D-multiplicities $n_\bullet$.
By Remark \ref{antisimplecompatible}, $\sigma_\alpha w \xrightarrow{\alpha} w$ (resp.\ $\sigma_\beta w \xrightarrow{\beta} w$)
is an edge in the Bruhat graph compatible with $f$ (resp.\ with $g$). By Lemma \ref{twodiffcompatible}, $c_{w,n_\bullet} \ge 2$.

In what follows, we suppose that there are no pairs of roots $\alpha,\beta \in \Phi^+\cap w (-\Pi)$
such that $f(\alpha)\in \supp \beta$, $f(\beta)\in \supp \alpha$, 
and $(\alpha,\beta)=0$ (inside the proof, we refer to this assumption as ``assumption $(*)$'').
Our next goal is to find a root $\delta \in \Phi^+ \cap w(-\Pi)$ and a distribution $g \colon \Phi^+ \cap w \Phi^- \to \Pi$
with vector of D-multiplicities $n_\bullet$ such that the restriction of 
$g$ to $\Phi^+\cap \sigma_\delta w\Phi^- = (\Phi^+\cap w\Phi^-)\setminus \{ \delta \}$ is flexible.

If there exist three \emph{different} roots
$\alpha, \beta, \gamma \in \Phi^+\cap w\Phi^-$
such that $f(\alpha)\in \supp \beta$, $f(\beta)\in \supp \alpha$, 
and $(\alpha,\beta)=0$, and $w^{-1}\gamma \in -\Pi$, then we simply set $g=f$ and $\delta=\gamma$.

Suppose there are no such triples (``assumption $(**)$''). Since $f$ is flexible, we still have a pair of roots 
$\alpha, \beta \in \Phi^+\cap w\Phi^-$
such that $f(\alpha)\in \supp \beta$, $f(\beta)\in \supp \alpha$, 
and $(\alpha,\beta)=0$. 
And there still exists a root $\gamma \in \Phi^+ \cap w (-\Pi)$ by Lemma \ref{antisimpleexistsadmissible} (\ref{antisimpleexistsbgg}).
So, the only possibility that agrees with our current assumptions is that $\alpha = \gamma$ or $\beta = \gamma$.
Without loss of generality, $\alpha=\gamma$, $\alpha \in w (-\Pi)$.
Then it follows directly from our assumptions that $\alpha$ is the only element of 
$\Phi^+ \cap w \Phi^-$ that belongs to $-w\Pi$: $(*)$ excludes $\beta$, and $(**)$ excludes all other elements of $\Phi^+ \cap w \Phi^-$.
We can apply Lemma \ref{freechoiceorthogonalpre}.
Lemma \ref{freechoiceorthogonalpre} may tell us 
that $c_{w, n_\bullet}\ge 2$. In this case, we are done (even though we didn't construct the desired $g$).
Otherwise, set $\delta=\alpha$ and take $g$ from Lemma \ref{freechoiceorthogonalpre}.

So, now we have a root $\delta \in \Phi^+ \cap w(-\Pi)$ and a distribution $g \colon \Phi^+ \cap w \Phi^- \to \Pi$
with vector of D-multiplicities $n_\bullet$ such that the restriction of 
$g$ to 
$\Phi^+\cap \sigma_\delta w\Phi^-$
is flexible.
Let $\alpha_i = g(\delta)$.
Then the vector of D-multiplicities of 
$g|_{\Phi^+\cap \sigma_\delta w\Phi^-}$ is $n_\bullet - e_i$.
It is also clear that the configuration 
$(\Phi^+ \cap \sigma_\delta w \Phi^-, n_\bullet - e_i)=((\Phi^+ \cap w \Phi^-) \setminus \{ \delta \}, n_\bullet - e_i)$ 
has small essential coordinates.
By the induction hypothesis, $c_{\sigma_\delta w, n_\bullet - e_i} \ge 2$.
By Corollary \ref{antireducedsortingprefixmulttwopropagates} (applied to $(\sigma_\delta w \xrightarrow{\delta} w, \alpha_i)$), we get
$c_{w,n_\bullet}\ge 2$.
\end{proof}
\begin{sideremark}
In this proof, we used Lemma \ref{twodiffcompatible} directly for antisimple edges only. 
But there is no contradiction with 
Example \ref{std4example}, because in this proof we also used Lemma \ref{freechoiceorthogonalpre},
and in the proof of Lemma \ref{freechoiceorthogonalpre}, we also used Lemma \ref{twodiffcompatible}, 
and that time, we used it for not necessarily antisimple edges.
\end{sideremark}

Now, to finish the proof of Proposition \ref{freechoiceorthogonalnooverlap}, we have to construct a flexible 
simple root distribution on $\Phi^+ \cap w \Phi^-$. For that, we will need one more preliminary lemma.
\begin{lemma}\label{orthogonalpairrechoice}
Let $w\in W$ and $n_\bullet \in \ZZ_{\ge 0}$ be such that $|n_\bullet|=\ell(w)$ and
the configuration $(\Phi^+ \cap w \Phi^-, n_\bullet)$ has small essential coordinates.
Suppose that there exist roots $\alpha,\beta \in \Phi^+ \cap w \Phi^-$ and $\alpha_i \in \Pi$ such that 
$(\alpha,\beta)=0$ and $\alpha_i \in \supp \alpha \cap \supp \beta \cap \supp n_\bullet$.

Then there exist roots $\alpha',\beta' \in \Phi^+ \cap w \Phi^-$ such that 
$(\alpha',\beta')=0$, $\alpha_i \in \supp \alpha' \cap \supp \beta' \cap \supp n_\bullet$, 
and one of the following also holds:
\begin{enumerate}
\item $\alpha'=\alpha$ and $\beta' \in -w\Pi$;
\item $\beta' \prec \alpha'$.
\end{enumerate}
\end{lemma}
\begin{proof}
First, similarly to the beginning of the proof of Lemma \ref{freechoiceorthogonalpre}, 
without loss of generality, let us replace $\beta$ with a $\prec_w$-maximal element of the set 
$\{ \beta'' \in \Phi^+\cap w\Phi^- : (\alpha, \beta'') = 0, \alpha_i \in \supp \beta'' \}$.
If now $\beta \in -w\Pi$, we are done: set $\alpha'=\alpha$, $\beta'=\beta$.

Note that $\alpha$ and $\beta$ are $\prec$-comparable, then we are also done: 
we can set $\alpha'=\alpha$, $\beta'=\beta$ if $\beta \prec \alpha$, and 
$\alpha'=\beta$, $\beta'=\alpha$ if $\alpha \prec \beta$.

Now suppose that $\alpha$ and $\beta$ are not $\prec$-comparable and that $\beta \notin -w\Pi$.
By Lemma \ref{antisimpleisminimal} (\ref{alphaminimalsimplecrit}), 
this means that either there exist two roots $\gamma, \gamma' \in \Phi^+ \cap w \Phi^-$ such that $\gamma + \gamma' = \beta$, 
or there exists $\gamma \in \Phi^+ \cap w \Phi^-$ such that $\gamma - \beta \in \Phi^+ \cap w \Phi^+$.
Let us consider these two cases separately.
\begin{enumerate}
\item\label{inside:splits}
If there exist two roots $\gamma, \gamma' \in \Phi^+ \cap w \Phi^-$ such that $\gamma + \gamma' = \beta$, then 
$\supp \gamma \cup \supp \gamma' = \supp \beta$, and $\alpha_i \in \supp \beta$, so without loss of generality
(after a possible interchange of $\gamma$ and $\gamma'$), we may assume that $\alpha_i \in \supp \gamma$.
Also, by Lemma \ref{sumexists}, $(\beta,\gamma)=1$, and $\gamma' \in w\Phi^-$ means that $\beta \prec_w \gamma$.
\item\label{inside:covers}
If there exists $\gamma \in \Phi^+ \cap w \Phi^-$ such that $\gamma - \beta \in \Phi^+ \cap w \Phi^+$, then 
$\beta \prec \gamma$, so $\alpha_i \in \supp \beta$ implies $\alpha_i \in \supp \gamma$.
Also, we again have $(\beta,\gamma)=1$ by Lemma \ref{sumexists}, 
and $\gamma - \beta \in w\Phi^+$ again means that $\beta \prec_w \gamma$.
\end{enumerate}
So, in each of the cases \ref{inside:splits} and \ref{inside:covers}, we have a root $\gamma \in \Phi^+ \cap w \Phi^-$
such that $\alpha_i \in \supp \gamma$, $(\beta,\gamma)=1$, and $\beta \prec_w \gamma$.
By Lemma \ref{rhombuspreconstruction}, we have $(\alpha, \gamma)=1$.

Since the configuration $(\Phi^+ \cap w \Phi^-, n_\bullet)$ has small essential coordinates, 
we have $\alpha_i^*(\alpha)=\alpha_i^*(\beta)=\alpha_i^*(\gamma)=1$. By Lemma \ref{rhombusroot}, 
$\delta:= \alpha-\gamma+\beta \in \Phi^+ \cap w \Phi^-$, and we have $(\gamma,\delta)=0$ 
and $\alpha_i^* (\delta)=1$ 
(this time, we will not need the conclusions $(\alpha,\delta)=1$ and $(\beta,\delta)=1$ of Lemma \ref{rhombusroot}).

We have $(\alpha, \gamma)=1$ and $(\beta, \gamma)=1$, so both $\alpha$ and $\beta$ are $\prec$-comparable with $\gamma$.
But we are assuming that $\alpha$ is not $\prec$-comparable with $\beta$, so 
either $\alpha, \beta \prec \gamma$, or $\alpha,\beta \succ \gamma$.
If $\alpha, \beta \prec \gamma$, then $\delta = (\alpha - \gamma) +\beta \prec \beta \prec \gamma$, and we can set $\alpha'=\gamma$
and $\beta' = \delta$. 
Similarly, if $\alpha, \beta \succ \gamma$, then $\delta = (\alpha - \gamma) +\beta \succ \beta \succ \gamma$, and we can set $\alpha'=\delta$
and $\beta' = \gamma$. 
\end{proof}
\begin{corollary}\label{orthogonalpairrechoiceboth}
Let $w$, $n_\bullet$, $\alpha$, $\beta$, and $\alpha_i$ be as in Lemma \ref{orthogonalpairrechoice}.
Then there exist roots $\alpha'',\beta'' \in \Phi^+ \cap w \Phi^-$ such that 
$(\alpha'',\beta'')=0$, $\alpha_i \in \supp \alpha'' \cap \supp \beta'' \cap \supp n_\bullet$, 
and one of the following also holds:
\begin{enumerate}
\item both $\alpha''$ and $\beta''$ are in $-w\Pi$;
\item $\beta'' \prec \alpha''$.
\end{enumerate}
\end{corollary}
\begin{proof}
First, let us apply Lemma \ref{orthogonalpairrechoice} to the pair $\alpha,\beta$.
If it produces $\alpha', \beta'$ such that $\beta' \prec \alpha'$, we are done.

Otherwise, it produces a pair $\alpha',\beta'$, where $\alpha'=\alpha$ and $\beta' \in -w\Pi$.
Then let us apply Lemma \ref{orthogonalpairrechoice} again, to the pair $\beta', \alpha'$.
It can either produce $\alpha'', \beta''$ such that $\beta'' \prec \alpha''$ (and we are done), 
or $\alpha'', \beta''$ such that $\alpha'' = \beta'$ and $\beta'' \in -w\Pi$ (and this also finishes the proof since $\beta' \in -w\Pi$).
\end{proof}
Now we are ready to finish the proof of Proposition \ref{freechoiceorthogonalnooverlap}.
\begin{proof}[Last steps of the proof of Proposition \ref{freechoiceorthogonalnooverlap}]
If the configuration $(\Phi^+ \cap w \Phi^-, n_\bullet)$ has large essential coordinates, then $c_{w,n_\bullet} \ge 2$ by Lemma \ref{freechoicetwo}.

Suppose that the configuration $(\Phi^+ \cap w \Phi^-, n_\bullet)$ has small essential coordinates.
Then by Corollary \ref{orthogonalpairrechoiceboth}, we may assume without loss of generality 
that either $\alpha,\beta \in -w\Pi$, or $\beta \prec \alpha$. Let us consider these two cases separately.
Fix a root $\alpha_i \in \supp \alpha \cap \supp \beta \cap \supp n_\bullet$.

If $\alpha,\beta \in -w\Pi$, then by Lemma \ref{excessiveisfreechoice} (applied twice), 
there exist simple root distributions $f$ and $g$ on $\Phi^+ \cap w \Phi^-$, both with vector of D-multiplicities $n_\bullet$, 
such that $f(\alpha)=g(\beta)=\alpha_i$.
By Remark \ref{antisimplecompatible}, $\sigma_\alpha w \xrightarrow{\alpha} w$ (resp.\ $\sigma_\beta w \xrightarrow{\beta} w$)
is an edge in the Bruhat graph compatible with $f$ (resp.\ with $g$).
By Lemma \ref{twodiffcompatible}, $c_{w,n_\bullet} \ge 2$.

If $\beta \prec \alpha$, then also by Lemma \ref{excessiveisfreechoice} (applied once this time), 
there exists a simple root distribution $f$ on $\Phi^+ \cap w \Phi^-$ such that its 
vector of D-multiplicities is $n_\bullet$ and such that $f(\alpha)=\alpha_i$.
Then $f(\alpha)=\alpha_i \in \supp \beta$ by the choice of $\alpha_i$, and 
$f(\beta) \in \supp \alpha$ since $\beta \prec \alpha$ and $\supp \beta \subseteq \supp \alpha$.
So, $f$ is flexible, and $c_{w,n_\bullet} \ge 2$ by Lemma \ref{freechoiceflexible}.
\end{proof}
\begin{sideremark}
One could ask why, in this proof, we used Corollary \ref{antireducedsortingprefixmulttwopropagates}, 
which required us to deal with distributions of simple roots, instead of trying to use Lemma \ref{sortingprefixmulttwopropagates}, 
which simply needs configurations.
The problem is that if we tried to do induction on $\ell(w)$ using Lemma \ref{sortingprefixmulttwopropagates}, then,
even if we start with a 
strictly excessive configuration $(\Phi^+ \cap w \Phi^-, n_\bullet)$, 
then 
a configuration of the form $(\Phi^+ \cap \sigma_{\alpha} w \Phi^-, n_\bullet-e_i)$ appearing in Lemma \ref{sortingprefixmulttwopropagates}
doesn't have to be strictly excessive anymore.
And, as we will see in Example \ref{largeessntialoutsideinvolved} later, 
Proposition \ref{freechoiceorthogonalnooverlap} becomes wrong if we drop the strict excessiveness condition.

This also explains why we introduced the definitions of small essential coordinates and a flexible distribution:
while we could not maintain excessiveness if we tried to do induction using Lemma \ref{sortingprefixmulttwopropagates}, 
we still can maintain some properties of the configuration and distribution when we do induction 
using Corollary \ref{antireducedsortingprefixmulttwopropagates}, and we needed some terminology for these properties.
(Speaking of flexibility, the maintenance was not immediate, we had to modify the distribution sometimes, 
but it was possible, as we have seen in Lemma \ref{freechoiceorthogonalpre}.)
\end{sideremark}
\begin{example}\label{b2orthogonalexample}
Proposition \ref{freechoiceorthogonalnooverlap} becomes wrong if we drop the assumption that $G$ is of type $A$, $D$, or $E$.
Let $G$ be a group of type $B_2$, $w=\sigma_{\alpha_1+2\alpha_2} = \sigma_2 \sigma_1 \sigma_2$.
Then $\Phi^+ \cap w \Phi^-=\{\alpha_1+\alpha_2, \alpha_1+2\alpha_2, \alpha_2\}$. 
We have the following (antisimple) path in the Bruhat graph:
\begin{equation}\label{b2path}
1_W \xrightarrow{\alpha_2} \sigma_2 \xrightarrow{\alpha_1+2\alpha_2} \sigma_{\alpha_1+2\alpha_2} \sigma_2 = \sigma_2 \sigma_1 
\xrightarrow{\alpha_1+\alpha_2}
\sigma_{\alpha_1+\alpha_2} \sigma_{\alpha_1+2\alpha_2} \sigma_2 = w
\end{equation}
We have $\langle \varpi_2, \alpha_2 \rangle = \langle \varpi_2, \alpha_1+2\alpha_2 \rangle = \langle \varpi_2, \alpha_1 + \alpha_2 \rangle = 1$, 
so the Z-multiplicity of (\ref{b2path}) for enhancement $\alpha_2, \alpha_2, \alpha_2$ equals 1. 
And in fact, (\ref{b2path}) is the only enhanced path from $1_W$ to $w$ with enhancement $\alpha_2, \alpha_2, \alpha_2$.
One can check this by a case-by-case analysis of edges 
leading from vertices not in (\ref{b2path}) to vertices in (\ref{b2path})
using Lemma \ref{admissibilitycrit} and Proposition \ref{sortabilitycriterium} 
(recall that the results of Section \ref{sectioncriterionzero} hold for groups of all types).
For example, there is an edge $\sigma_2 w \xrightarrow{\alpha_2} w$, but by Lemma \ref{admissibilitycrit}, 
we get $\Phi^+ \cap \sigma_2 w \Phi^-=\{\alpha_1+\alpha_2, \alpha_1\}$, and 
by Proposition \ref{sortabilitycriterium} (\ref{distributionexists}), 
we get $c_{\sigma_2 w, 2e_2}=0$, so, there are no enhanced paths from $1_W$ to $w$ that have enhancement $\alpha_2, \alpha_2, \alpha_2$
and pass through $\sigma_2 w$.

This way we can prove that $c_{w,3e_2}=1$. But in $\Phi^+ \cap w \Phi^-$, there are 
orthogonal roots $\alpha=\alpha_1+\alpha_2$ and $\beta=\alpha_2$, and $\supp \alpha \cap \supp \beta \cap \supp (3e_2)=\{\alpha_2\} \ne \varnothing$.
\end{example}

\subsection{Obtuse angles between roots in an excessive configuration and $c_{w,n_\bullet}=1$}
In this subsection, we will prove the following proposition about 
pairs of roots in $\Phi^+ \cap w \Phi^-$ making an obtuse angle.
\begin{proposition}\label{freechoicenosplit}
Let $G$ be a semisimple split algebraic group with simply laced Dynkin diagram.
Let $w \in W$ and $n_\bullet \in \ZZ_{\ge 0}^r$ be such that 
$(\Phi^+ \cap w \Phi^-, n_\bullet)$ is a strictly excessive configuration.
If there exist two roots $\alpha, \beta \in \Phi^+ \cap w \Phi^-$ such that
$(\alpha, \beta)=-1$,
then $c_{w,n_\bullet} \ge 2$.
\end{proposition}
(Note that in this proposition, unlike in Proposition \ref{freechoiceorthogonalnooverlap}, 
we don't require that $\supp \alpha \cap \supp \beta \cap \supp n_\bullet \ne \varnothing$.
Moreover, in fact, if we added the condition $\supp \alpha \cap \supp \beta \cap \supp n_\bullet \ne \varnothing$, 
then we would already know that $c_{w,n_\bullet} \ge 2$ from Corollary \ref{freechoiceonetwozero}.)

We start with the following remark and corollary about 
what we already know.
\begin{remark}\label{excessivesimpleproperties}
\begin{enumerate}
\item\label{excessiveintersectsupp} Let $(A, n_\bullet)$ be 
an excessive configuration. 
Then the condition $R_{\supp n_\bullet}(A)=A$ can be reformulated as follows: for every $\alpha \in A$, 
we have $\supp \alpha \cap \supp n_\bullet \ne \varnothing$.
\item\label{restrictionunion} Let $A \subseteq \Phi^+$, $I,J \subseteq \Pi$. Then $R_{I \cup J}(A) = R_I(A) \cup R_J(A)$.
\end{enumerate}
\end{remark}
\begin{corollary}\label{excessive1product1}
Let $w \in W$ and $n_\bullet \in \ZZ_{\ge 0}^r$ be such that 
$(\Phi^+ \cap w \Phi^-, n_\bullet)$ is a strictly excessive configuration and $c_{w,n_1,\ldots,n_r}=1$.
If $\alpha, \beta \in \Phi^+ \cap w \Phi^-$, $\alpha \ne \beta$, and $\supp \alpha \cap \supp \beta \cap \supp n_\bullet \ne \varnothing$, 
then $(\alpha, \beta)=1$.
\end{corollary}
\begin{proof}
Follows directly from Corollary \ref{freechoiceonetwozero} and 
Proposition \ref{freechoiceorthogonalnooverlap}.
\end{proof}
Then, we will need a lemma about strictly excessive configurations.
\begin{lemma}\label{strictlyexcessivesplittingimpossible}
Let $(A, n_\bullet)$ be a strictly excessive configuration. Then it is impossible to split $\supp n_\bullet$
into a disjoint union of nonempty subsets $\supp n_\bullet = I \sqcup J$ ($I \ne \varnothing$, $J \ne \varnothing$)
so that $|R_I(A) \cap R_J(A)| \le 1$.
\end{lemma}
\begin{proof}
Assume the contrary. We have 
\begin{multline*}
|n_\bullet| = |A| = \\
\shoveleft{\text{(by the definition of an excessive configuration)}} \\
|R_{\supp n_\bullet}(A)| = \\
\text{(by Remark \ref{excessivesimpleproperties} (\ref{restrictionunion}))} \\
|R_I(A) \cup R_J(A)| = |R_I(A)| + |R_J(A)| - |R_I(A) \cap R_J(A)| \ge |R_I(A)| + |R_J(A)| - 1 \ge \\ 
\text{(by the definition of a strictly excessive configuration,}\\
\text{keeping in mind that all numbers here are integers)} \\
|p_I(n_\bullet)| + 1 + |p_J(n_\bullet)| + 1 - 1 = \\
\Big(\sum_{i:\alpha_i \in I} n_i\Big) + \Big(\sum_{i:\alpha_i \in J} n_i\Big) + 1 = |n_\bullet|+1.
\end{multline*}
A contradiction.
\end{proof}
We will also need the following property of strictly excessive configurations $(\Phi^+ \cap w \Phi^-, n_\bullet)$ with $c_{w,n_\bullet}=1$.
\begin{lemma}\label{freechoiceuniquemax}
Let $w \in W$ and $n_\bullet \in \ZZ_{\ge 0}^r$ be such that 
$(\Phi^+ \cap w \Phi^-, n_\bullet)$ is a strictly excessive configuration and $c_{w,n_1,\ldots,n_r}=1$.
Then there is a unique $\prec$-maximal element $\alpha \in \Phi^+\cap w\Phi^-$.
Moreover, $\supp n_\bullet \subseteq \supp \alpha$, 
and for every other element $\beta \in \Phi^+\cap w\Phi^-$, we have $(\alpha, \beta)=1$.
\end{lemma}
\begin{proof}
Let $\alpha \in \Phi^+\cap w\Phi^-$ be a $\prec$-maximal element.
Let $I = \supp \alpha \cap \supp n_\bullet$. By Remark \ref{excessivesimpleproperties} (\ref{excessiveintersectsupp}), $I \ne \varnothing$.

First, consider an arbitrary root $\beta \in R_I(\Phi^+ \cap w \Phi^-)$, $\beta \ne \alpha$.
By Corollary \ref{excessive1product1},
$(\alpha, \beta)=1$, and $\alpha$ and $\beta$ are $\prec$-comparable.
Since $\alpha$ is a $\prec$-maximal element of $\Phi^+\cap w\Phi^-$, we in fact have $\beta \prec \alpha$, 
$\supp \beta \subseteq \supp \alpha$, and $\supp \beta \cap \supp n_\bullet \subseteq I$.

Next, denote $J = \supp n_\bullet \setminus I$. 
Let us check that $R_I(\Phi^+ \cap w \Phi^-) \cap R_J(\Phi^+ \cap w \Phi^-) = \varnothing$.
Indeed, if we have a root
$\beta \in R_I(\Phi^+ \cap w \Phi^-)$, 
then the analysis of the previous paragraph shows that 
$\supp \beta \cap \supp n_\bullet \subseteq I$, 
so 
$\supp \beta \cap J = \varnothing$, 
and $\beta \notin R_J(\Phi^+ \cap w \Phi^-)$.
So, $R_J(\Phi^+ \cap w \Phi^-) \cap R_I(\Phi^+ \cap w \Phi^-) = \varnothing$.
By Lemma \ref{strictlyexcessivesplittingimpossible}, we cannot have $J \ne \varnothing$.

So, $J = \varnothing$, and $\supp n_\bullet = I \subseteq \supp \alpha$.
By the definition of an excessive configuration, $R_I (\Phi^+ \cap w \Phi^-)=\Phi^+ \cap w \Phi^-$.
And in the beginning of the proof, we have already checked the conclusion of the lemma for all roots from $R_I(\Phi^+ \cap w \Phi^-)$, 
so we are done.
\end{proof}
Now we are ready to proof Proposition \ref{freechoicenosplit}.
\begin{proof}[Proof of Proposition \ref{freechoicenosplit}]
First, $\gamma := \alpha+\beta\in \Phi$ by Lemma \ref{sumexists}.
Also, $\alpha, \beta \in \Phi^+ \cap w \Phi^-$, so 
$\gamma \in \Phi^+ \cap w \Phi^-$.

Let us check that $\gamma$ is the (unique by Lemma \ref{freechoiceuniquemax}) $\prec$-maximal element of $\Phi^+ \cap w \Phi^-$.
Indeed, let $\gamma'$ be the $\prec$-maximal element of $\Phi^+ \cap w \Phi^-$.
Clearly, $\alpha$ or $\beta$ cannot be equal to $\gamma'$, because $\alpha, \beta \prec \gamma$.
Then by Lemma \ref{freechoiceuniquemax}, $(\gamma', \alpha)=(\gamma',\beta)=1$, so $(\gamma', \alpha+\beta)=2$, 
and $\gamma'=\gamma$.

Denote $I=\supp \alpha \cap \supp n_\bullet$ and $J=\supp \beta \cap \supp n_\bullet$. 
Clearly, $\supp \alpha \cup \supp \beta = \supp \gamma$, and by Lemma \ref{freechoiceuniquemax}, 
$\supp n_\bullet \subseteq \supp \gamma$. So, $I \cup J = \supp n_\bullet$. 
Also, $I \ne \varnothing$ and $J \ne \varnothing$ by Remark \ref{excessivesimpleproperties} (\ref{excessiveintersectsupp}).
If $I \cap J \ne \varnothing$, then $c_{w,n_\bullet} \ge 2$ by Corollary \ref{freechoiceonetwozero}, and we are done. 
In what follows, let us suppose that $I \cap J = \varnothing$.

Assume that $c_{w,n_\bullet} \le 1$. 
Then actually, we have $c_{w,n_\bullet}=1$ by Proposition \ref{sortabilitycriterium} (\ref{excessiveconfiguration}).

Let us find $R_I(\Phi^+ \cap w \Phi^-) \cap R_J(\Phi^+ \cap w \Phi^-)$. 
Clearly, $\gamma \in R_I(\Phi^+ \cap w \Phi^-) \cap R_J(\Phi^+ \cap w \Phi^-)$. 
On the other hand, let $\delta \in R_I(\Phi^+ \cap w \Phi^-) \cap R_J(\Phi^+ \cap w \Phi^-)$.
Since $I \cap J = \varnothing$, we have $\alpha \notin R_J(\Phi^+ \cap w \Phi^-)$ and $\beta \notin R_I(\Phi^+ \cap w \Phi^-)$, 
so $\delta \ne \alpha$, $\delta \ne \beta$. 
Since we are assuming $c_{w,n_\bullet}=1$, 
by Corollary \ref{excessive1product1}, we get $(\delta,\alpha)=(\delta,\beta)=1$. So $(\delta, \alpha+\beta)=2$, and $\delta=\gamma$.
Hence, $R_I(\Phi^+ \cap w \Phi^-) \cap R_J(\Phi^+ \cap w \Phi^-)=\{\gamma\}$.

In particular, $|R_I(\Phi^+ \cap w \Phi^-) \cap R_J(\Phi^+ \cap w \Phi^-)|=1$, which means that we have a contradiction with 
Lemma \ref{strictlyexcessivesplittingimpossible}. Therefore, $c_{w,n_\bullet} \ge 2$.
\end{proof}

As we will see later, the conditions for $c_{w,n_\bullet}=1$ in 
Lemma \ref{freechoicetwo}, in Proposition \ref{freechoiceorthogonalnooverlap}, and in Proposition \ref{freechoicenosplit}
together also give a sufficient, not just necessary condition 
for $c_{w,n_\bullet}=1$ if we have a strictly excessive configuration. 
So, for now, let us combine these three statements into a single definition and a single proposition.
\begin{definition}\label{clusterdef}
A strictly excessive configuration $(A, n_\bullet)$ is called a \emph{cluster}
if the following conditions hold:
\begin{enumerate}
\item\label{clusterone} The configuration $(A, n_\bullet)$ has small essential coordinates. 
In other words, if $\alpha\in A$ and $n_i > 0$, then $\alpha_i^*(\alpha) \le 1$.
\item\label{clusternoonetwozero} If $\alpha,\beta\in A$, then $(\alpha,\beta) \ge 0$.
\item\label{clusternoinvolvedort} If $\alpha,\beta\in A$ and $(\alpha,\beta)=0$, then $\supp \alpha\cap \supp \beta\cap \supp n_\bullet=\varnothing$.
\end{enumerate}
\end{definition}
\begin{proposition}\label{freechoiceClusterNecessary}
Let $G$ be a semisimple split algebraic group with simply laced Dynkin diagram.
Let $w \in W$ and $n_\bullet \in \ZZ_{\ge 0}^r$ be such that 
$(\Phi^+ \cap w \Phi^-, n_\bullet)$ is a strictly excessive configuration and $c_{w,n_\bullet}=1$.
Then $(\Phi^+ \cap w \Phi^-, n_\bullet)$ is a cluster.
\end{proposition}
\begin{proof}
Follows from Lemma \ref{freechoicetwo}, Proposition \ref{freechoiceorthogonalnooverlap}, and Proposition \ref{freechoicenosplit}.
\end{proof}

\subsection{Reduction of the general case to the case of excessive configuration}

In this subsection we will prove a necessary condition (which will turn out to be also sufficient later) 
for $c_{w,n_\bullet}=1$ without any strict excessiveness assumption. 
Let us start with the corresponding definition. Now, unlike all subsections before, we will need restrictions 
of the form $R_{I,J}(A)$ with $J \ne \varnothing$.
Recall that if $A \subseteq \Phi^+$, then we define 
$R_{I,J} (A)$ as $\{ \alpha \in A : \supp \alpha \cap I \ne \varnothing , \supp \alpha \cap J = \varnothing \}$.
\begin{definition}
Let $A \subseteq \Phi^+$, $n_\bullet \in \ZZ_{\ge 0}^r$.
We call the configuration $(A,n_\bullet)$ \emph{clusterable} if 
$A = R_{\supp n_\bullet} (A)$ and
there exists an integer $k \ge 0$
and a decomposition $\supp n_\bullet = I_1 \sqcup \ldots \sqcup I_k$, where all $I_m$s are nonempty, such that
all configurations
\begin{equation}
\label{clusterization}
(R_{I_1, \varnothing}(A), p_{I_1} (n_\bullet)), (R_{I_2, I_1}(A), p_{I_2}(n_\bullet)), 
(R_{I_3, I_1 \cup I_2}(A), p_{I_3}(n_\bullet)), \ldots, 
(R_{I_k, I_1 \cup \ldots \cup I_{k-1}}(A), p_{I_k}(n_\bullet))
\end{equation}
are clusters. Note that we do call $(\varnothing, 0)$ a clusterable configuration (with $k=0$).
\end{definition}
%
\begin{sideremark}
It might look natural to include the 
excessiveness condition in the definition of a clusterable configuration.
But in fact, 
it can be proved 
(using Lemma \ref{hallintermsofrpre}, Lemma \ref{clusterizableinductive} (\ref{clusterizableinductivesingle}) below, 
and the strict excessiveness of clusters) that
the conditions we have already imposed
imply 
that every clusterable configuration $(A,n_\bullet)$ is 
excessive.
We will not use this fact (so we don't prove it precisely), but we will need (and will prove in Corollary \ref{clusterizableequal})
the weaker statement that $|A|=|n_\bullet|$.
\end{sideremark}
Now we are ready to formulate the main result of this subsection exactly.
\begin{proposition}\label{ClusterNecessary}
Let $G$ be a semisimple split algebraic group with simply laced Dynkin diagram.
Let 
$w \in W$ and $n_\bullet \in \ZZ_{\ge 0}^r$ be such that
$c_{w,n_\bullet}=1$.
Then 
the configuration $(\Phi^+\cap w\Phi^-, n_\bullet)$ is 
clusterable.
\end{proposition}
To prove this proposition, we start with an equivalent characterization of clusterable configurations.
%
\begin{lemma}\label{clusterizableinductive}
Let $(A,n_\bullet)$ be a configuration.
Then the following conditions are equivalent.
\begin{enumerate}
\item\label{clusterizableinductiveclusterizable} The configuration $(A,n_\bullet)$ is clusterable.
\item\label{clusterizableinductivesingle} 
\begin{enumerate}
\item Either $n_\bullet=0$ and $A=\varnothing$, 
\item or
$A = R_{\supp n_\bullet} (A)$ and 
there exists a nonempty subset $I \subseteq \supp n_\bullet$
such that $(R_{I}(A), p_I(n_\bullet))$ is a cluster and $(R_{\supp n_\bullet \setminus I, I}(A), p_{\supp n_\bullet \setminus I}(n_\bullet))$
is clusterable.
\end{enumerate}
\item\label{clusterizableinductivemultiple}
$A = R_{\supp n_\bullet} (A)$ and
there exists a 
(possibly empty)
subset $I \subseteq \supp n_\bullet$
such that $(R_{I}(A), p_I(n_\bullet))$ and $(R_{\supp n_\bullet \setminus I, I}(A), p_{\supp n_\bullet \setminus I}(n_\bullet))$
are clusterable.
\end{enumerate}
More precisely, if in condition \ref{clusterizableinductiveclusterizable}, $A \ne \varnothing$ 
and the definition of a clusterable configuration 
holds for a decomposition $\supp n_\bullet = I_1 \sqcup \ldots \sqcup I_k$, then in condition \ref{clusterizableinductivesingle},
we can take $I=I_1$, and the definition of a clusterable configuration 
will hold for $\supp n_\bullet \setminus I = I_2 \sqcup \ldots \sqcup I_k$.
And if the definitions of clusterable configurations in condition \ref{clusterizableinductivemultiple}
hold for decompositions $I = I_1 \sqcup \ldots \sqcup I_k$ and 
$\supp n_\bullet \setminus I = J_1 \sqcup \ldots \sqcup J_l$, then 
the definition of a clusterable configuration in condition \ref{clusterizableinductiveclusterizable}
holds for the decomposition
$\supp n_\bullet = I_1 \sqcup \ldots \sqcup I_k \sqcup J_1 \sqcup \ldots \sqcup J_l$.
\end{lemma}
\begin{proof}
For $\ref{clusterizableinductiveclusterizable} \Rightarrow \ref{clusterizableinductivesingle}$, note first that 
if $n_\bullet=0$, then $R_{\supp n_\bullet}(A)=\varnothing$, so $A=\varnothing$.
The rest of the proof ($\ref{clusterizableinductiveclusterizable} \Rightarrow \ref{clusterizableinductivesingle} 
\Rightarrow \ref{clusterizableinductivemultiple} 
\Rightarrow \ref{clusterizableinductiveclusterizable}$) 
follows directly from the definition of a clusterable configuration 
and the following three equalities, which in turn follow directly from the definition of a restriction.
\begin{enumerate}
\item If $I \subseteq \supp n_\bullet$ and $\supp n_\bullet \setminus I = J_1 \sqcup \ldots \sqcup J_l$, then
for every $m \in \{1,\ldots, l\}$ we have
$R_{J_m, J_1 \cup \ldots \cup J_{m-1}}(R_{\supp n_\bullet \setminus I,I} (A))=
R_{J_m, I \cup J_1 \cup \ldots \cup J_{m-1}}(A)$.
\item If $I \subseteq \supp n_\bullet$ and $I = I_1 \sqcup \ldots \sqcup I_k$, then 
for every $m \in \{1,\ldots, k\}$ we have
$R_{I_m, I_1 \cup \ldots \cup I_{m-1}}(A)=
R_{I_m, I_1 \cup \ldots \cup I_{m-1}}(R_I(A))$.
\item If $I \subseteq \supp n_\bullet$, $I = I_1 \sqcup \ldots \sqcup I_k$, and 
$\supp n_\bullet \setminus I = J_1 \sqcup \ldots \sqcup J_l$, then 
for every $m \in \{1,\ldots, l\}$ we have
$R_{J_m, I_1 \cup \ldots \cup I_k \cup J_1 \cup \ldots \cup J_{m-1}}(A)=
R_{J_m, J_1 \cup \ldots \cup J_{m-1}}(R_{\supp n_\bullet \setminus I,I} (A))$.
\end{enumerate}
\end{proof}
\begin{corollary}\label{clusterizableequal}
If $(A,n_\bullet)$ is a clusterable configuration, then $|A|=|n_\bullet|$.
\end{corollary}
\begin{proof}
Induction on $|n_\bullet|$ using Lemma \ref{clusterizableinductive} (\ref{clusterizableinductivesingle}). 
If $n_\bullet=0$, 
everything is clear.
If $|n_\bullet| > 0$, then 
let $I \subseteq \supp n_\bullet$ be a nonempty subset such that
such that $(R_{I}(A), p_I(n_\bullet))$ is a cluster and $(R_{\supp n_\bullet \setminus I, I}(A), p_{\supp n_\bullet \setminus I}(n_\bullet))$
is clusterable.
A cluster is a strictly excessive configuration by definition, so 
$|R_{I}(A)|= |p_I(n_\bullet)|$.

We have $n_\bullet=p_I(n_\bullet)+p_{\supp n_\bullet \setminus I}(n_\bullet)$ and 
$I \subseteq \supp n_\bullet$, $I \ne \varnothing$, so $|p_I(n_\bullet)|>0$
and $|p_{\supp n_\bullet \setminus I}(n_\bullet)|<|n_\bullet|$.
By the induction hypothesis, 
$|R_{\supp n_\bullet \setminus I, I}(A)|=|p_{\supp n_\bullet \setminus I}(n_\bullet)|$.

We always have $R_{I}(A) \cap R_{\supp n_\bullet \setminus I, I}(A) = \varnothing$ and 
$R_{\supp n_\bullet}(A)=R_{I}(A) \cup R_{\supp n_\bullet \setminus I, I}(A)$.
Also, now we have $A=R_{\supp n_\bullet}(A)$, so 
$|A|=|R_{I}(A)| + |R_{\supp n_\bullet \setminus I, I}(A)|$.
\end{proof}
The rest of the proof of Proposition \ref{ClusterNecessary} consists of two parts.
First, we are going to prove (for a certain subset $I \subseteq \supp n_\bullet$) 
that $(R_{I}(\Phi^+\cap w\Phi^-), p_I(n_\bullet))$ is a cluster, and then 
we will prove that 
$(R_{\supp n_\bullet \setminus I, I}(\Phi^+\cap w\Phi^-), p_{n_\bullet \setminus I}(n_\bullet))$
is clusterable.
In turn, for the first of these two parts, we need a few preliminary lemmas.
Recall that for any $J \subseteq \Pi$, we denote by $\Phi_J$ the set of roots whose supports are contained in $J$, 
and we denote by $W_J$ the subgroup of $W$ generated by the reflections corresponding to the roots from $\Phi_J$.
\begin{lemma}\label{firstsortsmall}
Let $w \in W$ and $n_\bullet \in \ZZ_{\ge 0}^r$ be such that $c_{w,n_\bullet} > 0$ (not necessarily $c_{w, n_\bullet}=1$).
Let $I \subseteq \supp n_\bullet$ be such that $|R_I(\Phi^+ \cap w \Phi^-)| = |p_I(n_\bullet)|$.
Let $J \subseteq \Pi$ be a subset such that $\Phi^+ \cap w \Phi^- \subseteq \Phi_J$.
Then there exists an element $v \in W$ and an enhanced path from $v$ to $w$ in the Bruhat graph with the following properties.
\begin{enumerate}
\item\label{highsortingdmult} The vector of D-multiplicities of the enhancement is $p_{\supp n_\bullet \setminus I} (n_\bullet)$.
\item\label{highsortingnoi} For each label $\beta_i$ of an edge in the path, we have 
$\supp \beta_i \subseteq J \setminus I$.
In particular, we have 
$w v^{-1} \in W_{J \setminus I}$.
\item\label{lowsortingpositive} $c_{v, p_I(n_\bullet)} > 0$.
\end{enumerate}
\end{lemma}
\begin{proof}
Choose an arbitrary sequence $S$ of simple roots with vector of D-multiplicities 
$n_\bullet - p_I(n_\bullet) = p_{\supp n_\bullet \setminus I}(n_\bullet)$.
Let us apply Corollary \ref{pieriformulacwnmultiple} to $c_{w,n_\bullet}$ and to this sequence $S$.
It follows from the positivity of the right-hand side of 
(\ref{pieriformulacwnmultiplerel})
that 
there exists an enhanced path of the form
\begin{equation*}
\left( 
P = v \xrightarrow{\beta_1} \ldots \xrightarrow{\beta_l} w, S
\right),
\end{equation*}
where $l=|p_{\supp n_\bullet \setminus I}(n_\bullet)|$, and $v \in W$ is such that $c_{v, p_I(n_\bullet)} > 0$.
So, claims \ref{highsortingdmult} and \ref{lowsortingpositive}
hold for these $v$ and $(P,S)$.

Let us check that claim \ref{highsortingnoi} also holds for $v$ and $(P,S)$.
By Lemma \ref{antisimplelabelsisphiwphi} (\ref{antisimplepathinequality}), 
there exists a bijection $h \colon (\Phi^+ \cap v \Phi^-) \sqcup \{ 1, \ldots, l\} \to \Phi^+ \cap w \Phi^-$ 
such that $h(i) \succeq \beta_i$ for $1 \le i \le l$ and $h(\alpha) \succeq \alpha$ for all $\alpha \in \Phi^+ \cap v \Phi^-$.
By Proposition \ref{sortabilitycriterium} (\ref{excessiveconfiguration}),
$(\Phi^+ \cap v \Phi^-, p_I(n_\bullet))$ is an excessive configuration, 
and by Remark \ref{excessivesimpleproperties} (\ref{excessiveintersectsupp}), 
we have $\supp \alpha \cap I \ne \varnothing$ for all $\alpha \in \Phi^+ \cap v \Phi^-$ (clearly, $\supp p_I(n_\bullet)=I$).
Then also $\supp h(\alpha) \cap I \ne \varnothing$, and $h(\alpha) \in R_I(\Phi^+ \cap w \Phi^-)$
for all $\alpha \in \Phi^+ \cap v \Phi^-$.

Recall that $|R_I(\Phi^+ \cap w \Phi^-)| = |p_I(n_\bullet)|$. Also, $|p_I(n_\bullet)|=\ell(v)=|\Phi^+ \cap v \Phi^-|$, 
so
$h(\Phi^+ \cap v \Phi^-)$
is actually the whole set $R_I(\Phi^+ \cap w \Phi^-)$, and $\{h(1), \ldots, h(l)\} = 
(\Phi^+ \cap w \Phi^-) \setminus R_I(\Phi^+ \cap w \Phi^-)$.
Hence, $\supp h(i) \cap I = \varnothing$ for all if $i \in \{1, \ldots, l\}$. 
Recall that $\Phi^+ \cap w \Phi^- \subseteq \Phi_J$, so $\supp h(i) \subseteq J$.
Since $\beta_i \preceq h(i)$, we also have 
$\supp \beta_i \subseteq J \setminus I$ 
for all labels $\beta_i$ of the edges of $P$. 
Finally, $w = \sigma_{\beta_l} \ldots \sigma_{\beta_{l_0+1}} v$, so 
$w v^{-1} = \sigma_{\beta_l} \ldots \sigma_{\beta_{l_0+1}} \in W_{J \setminus I}$.
\end{proof}
In this section, we are going to use 
Lemma \ref{firstsortsmall}
for $J=\Pi$ only (but still for arbitrary $I$), 
but later we will need it for arbitrary $J$ also.
\begin{lemma}\label{smallsortingpreservesclusters}
Let $v, w \in W$, $n_\bullet \in \ZZ_{\ge 0}^r$, $I \subseteq \supp n_\bullet$.
Suppose that $w v^{-1} \in W_{\Pi \setminus I}$. Then:
\begin{enumerate}
\item\label{restrictionmoved} We have $R_I(\Phi^+ \cap w \Phi^-) = wv^{-1} R_I(\Phi^+ \cap v \Phi^-)$.
\item\label{excessiveiff} The configuration $(R_I(\Phi^+ \cap w \Phi^-), p_I(n_\bullet))$ is strictly excessive if and only if 
the configuration $(R_I(\Phi^+ \cap v \Phi^-), p_I(n_\bullet))$ is strictly excessive.
\item\label{clusteriff} The configuration $(R_I(\Phi^+ \cap w \Phi^-), p_I(n_\bullet))$ is a cluster if and only if 
the configuration $(R_I(\Phi^+ \cap v \Phi^-), p_I(n_\bullet))$ is a cluster.
\end{enumerate}
\end{lemma}
\begin{proof}
Note first that since $w v^{-1} \in W_{\Pi \setminus I}$, 
for any $\alpha \in \Phi$ and for any $\alpha_i \in I$ we have $\alpha_i^* (\alpha) = \alpha_i^* (w v^{-1} \alpha)$.
Also, note that the statement of the lemma is symmetric in $v$ and $w$. 

For claim \ref{restrictionmoved}, let us prove that 
$wv^{-1} R_I(\Phi^+ \cap v \Phi^-) \subseteq R_I(\Phi^+ \cap w \Phi^-)$, 
then the other inclusion will follow after we interchange $v$ and $w$.
Let $\alpha \in R_I(\Phi^+ \cap v \Phi^-)$. 
Then $\supp \alpha \cap I \ne \varnothing$, 
choose an arbitrary $\alpha_i \in \supp \alpha \cap I$ (then $\alpha_i^*(\alpha) > 0$).
Then $\alpha_i^*(w v^{-1} \alpha) >0$ by the remark at the beginning of the proof,
so $w v^{-1} \alpha \in \Phi^+$ and $\supp(w v^{-1} \alpha) \cap I \ne \varnothing$.
We also have $\alpha \in v \Phi^-$, so $v^{-1} \alpha \in \Phi^-$, and $w v^{-1}\alpha \in w \Phi^-$.
Hence, $w v^{-1}\alpha \in R_I(\Phi^+ \cap w \Phi^-)$, which proves claim \ref{restrictionmoved}.

For claim 
\ref{excessiveiff}, 
let us also suppose that $(R_I(\Phi^+ \cap w \Phi^-), p_I(n_\bullet))$ is a strictly excessive configuration
and prove that $(R_I(\Phi^+ \cap v \Phi^-), p_I(n_\bullet))$ is strictly excessive, 
the other direction will follow by interchange of $v$ and $w$.

We clearly have $\supp p_I(n_\bullet)= I$ (recall that $I \subseteq \supp n_\bullet$)
and $R_I(R_I(\Phi^+ \cap v \Phi^-))=R_I(\Phi^+ \cap v \Phi^-)$.
The rest of the definition of 
strict excessiveness is formulated in 
terms of the cardinalities of the sets $R_J(R_I(\Phi^+ \cap v \Phi^-))$ and the numbers $|p_J(p_I(n_\bullet))|$
for all subsets $J \subseteq I$.
Clearly, $R_J(R_I(\Phi^+ \cap v \Phi^-))=R_J(\Phi^+ \cap v \Phi^-)$ (and same for $w$)
and $p_J(p_I(n_\bullet))=p_J(n_\bullet)$ (and these numbers don't depend on $v$ or $w$).
So, the strict excessiveness of $(R_I(\Phi^+ \cap v \Phi^-), p_I(n_\bullet))$
follows from claim \ref{restrictionmoved} applied to all subsets $J \subseteq I$, and we are done with claim \ref{excessiveiff}.

Claim \ref{clusteriff}: again, let us suppose that $(R_I(\Phi^+ \cap w \Phi^-), p_I(n_\bullet))$
is a cluster and prove that $(R_I(\Phi^+ \cap v \Phi^-), p_I(n_\bullet))$ is a cluster.
We have already proved that $(R_I(\Phi^+ \cap v \Phi^-), p_I(n_\bullet))$ is strictly excessive.
The remaining conditions in the definition of a cluster are formulated in terms of scalar products of roots
and of their coordinates (with respect to the basis $\Pi$) corresponding to the simple roots from $I$.
By claim \ref{restrictionmoved}, $wv^{-1}$ can be viewed as 
a bijection $R_I(\Phi^+ \cap v \Phi^-) \to R_I(\Phi^+ \cap w \Phi^-)$, and it preserves scalar products (clearly)
and these coordinates (the remark at the beginning of the proof). So, $(R_I(\Phi^+ \cap v \Phi^-), p_I(n_\bullet))$ is a cluster.
\end{proof}
\begin{lemma}\label{minstrictlyexcessive}
Let $(A,n_\bullet)$ be an excessive configuration.
Let $I \subseteq \supp n_\bullet$ be a minimal by inclusion nonempty subset
such that $|R_I(A)| = |p_I(n_\bullet)|$. Then $(R_I(A), p_I(n_\bullet))$ is a strictly excessive configuration.
\end{lemma}
\begin{proof}
Clearly, $\supp p_I(n_\bullet)=I$, and $R_I(R_I(A)) = R_I(A)$.
By the definition of an excessive configuration, for every $J \subseteq I$, we have
$|R_J(R_I(A))| = |R_J(A)| \ge |p_J(n_\bullet)| = |p_J(p_I(n_\bullet))|$. If $J \ne I$ and $J \ne \varnothing$, 
then this inequality becomes strict by the choice of $I$.
\end{proof}
We are ready to finish the first part of the proof of Proposition \ref{ClusterNecessary}.
\begin{lemma}\label{excessivesubsetiscluster}
Let 
$w \in W$ and $n_\bullet \in \ZZ_{\ge 0}^r$ be such that
$c_{w,n_\bullet}=1$.
Let $I \subseteq \supp n_\bullet$ be a minimal by inclusion nonempty subset
such that $|R_I(\Phi^+ \cap w \Phi^-)| = |p_I(n_\bullet)|$.
Then $(R_I(\Phi^+ \cap w \Phi^-), p_I(n_\bullet))$ is a cluster.
\end{lemma}
\begin{proof}
By Lemma \ref{firstsortsmall}, there exists an element an element $v \in W$ and an enhanced path from $v$ to $w$ 
with the following properties:
the vector of D-multiplicities of the enhancement is $p_{\supp n_\bullet \setminus I}(n_\bullet)$, 
$wv^{-1} \in W_{\Pi \setminus I}$, and $c_{v, p_I(n_\bullet)} > 0$.
By Proposition \ref{admissibilitycrit} (\ref{excessiveconfiguration}),
$(\Phi^+ \cap v \Phi^-, p_I(n_\bullet))$ is an excessive configuration. In particular, 
$R_I(\Phi^+ \cap v \Phi^-) = \Phi^+ \cap v \Phi^-$.

We also clearly have $p_{\supp n_\bullet \setminus I}(n_\bullet)+p_I(n_\bullet)=n_\bullet$, so by Lemma \ref{sortingprefixmulttwopropagates}, 
$c_{w,n_\bullet} \ge c_{v, p_I(n_\bullet)}$. Hence, the only possibility for $c_{v, p_I(n_\bullet)}$ is that $c_{v, p_I(n_\bullet)}=1$.

By Lemma \ref{minstrictlyexcessive}, $(R_I(\Phi^+ \cap w \Phi^-), p_I(n_\bullet))$ is a strictly excessive configuration.
By Lemma \ref{smallsortingpreservesclusters} (\ref{excessiveiff}), 
$(R_I(\Phi^+ \cap v \Phi^-), p_I(n_\bullet))$ is also a strictly excessive configuration.
We already know that $R_I(\Phi^+ \cap v \Phi^-) = \Phi^+ \cap v \Phi^-$, so we can say that 
$(\Phi^+ \cap v \Phi^-, p_I(n_\bullet))$ is a strictly excessive configuration.
By Proposition \ref{freechoiceClusterNecessary}, $(\Phi^+ \cap v \Phi^-, p_I(n_\bullet))$ is a cluster.
By Lemma \ref{smallsortingpreservesclusters} (\ref{clusteriff}),  
$(R_I(\Phi^+ \cap w \Phi^-), p_I(n_\bullet))$ is a cluster.
\end{proof}

Now, to continue the proof of Proposition \ref{ClusterNecessary}, we need the following lemma about existence of antisimple edges.
Previously, when we needed to find an antisimple edge, Lemma \ref{antisimpleadmissible} (\ref{antisimpleexistsbgg}) was enough for us,
but now we need to find antisimple edges in a more precise way.
\begin{lemma}\label{antisimplesuppexists}
Let $w\in W$
and $I \subseteq \Pi$ be such that
$R_I(\Phi^+\cap w\Phi^-) \ne \varnothing$.
Then $R_I(\Phi^+\cap w\Phi^-) \cap w(-\Pi) \ne \varnothing$.
In other words, there exists a root $\beta$ such that $\sigma_\beta w \xrightarrow{\beta} w$ is an antisimple edge
and $\supp \beta \cap I \ne \varnothing$.
\end{lemma}
\begin{proof}
Let 
$\alpha$ 
be a $\prec_w$-maximal element of 
$R_I(\Phi^+\cap w\Phi^-)$.
Assume that 
$w^{-1}\alpha \notin -\Pi$. 
Then by Lemma \ref{antisimpleisminimal}, 
either there exists roots 
$\beta,\gamma \in \Phi^+ \cap w\Phi^-$
such that 
$\alpha=\beta+\gamma$, 
or there exists 
$\beta \in \Phi^+ \cap w\Phi^-$
such that 
$\beta-\alpha \in \Phi^+ \cap w\Phi^+$.

If there exist roots 
$\beta,\gamma \in \Phi^+ \cap w\Phi^-$
such that 
$\alpha=\beta+\gamma$, 
then 
$\supp \alpha = \supp \beta \cup \supp \gamma$,
so $\supp \beta \cap I \ne \varnothing$ or $\supp \gamma \cap I \ne \varnothing$.
Without loss of generality, 
$\supp \beta \cap I \ne \varnothing$, $\beta \in R_I(\Phi^+\cap w\Phi^-)$.
We have 
$\alpha - \beta = \gamma \in w\Phi^-$, 
so 
$\alpha \prec_w \beta$. 
A contradiction with the $\prec_w$-maximality of 
$\alpha$.

If there exists 
$\beta \in \Phi^+ \cap w\Phi^-$
such that 
$\beta-\alpha \in \Phi^+ \cap w\Phi^+$,
then 
$\alpha \prec_w \beta$.
Also, 
$\alpha \prec \beta$,
so 
$\supp \alpha \subseteq \supp \beta$, 
and 
$\beta \in R_I(\Phi^+\cap w\Phi^-)$.
We again have a contradiction
with the $\prec_w$-maximality of 
$\alpha$.
\end{proof}
\begin{corollary}\label{antireducedsortingprefixexists}
Let $w\in W$, let $I \subseteq \Pi$, let $l = |R_I(\Phi^+ \cap w \Phi^-)|$.
Then there exists an antisimple path $u \xrightarrow{\beta_1} \ldots \xrightarrow{\beta_l} w$ 
such that
$\{\beta_1, \ldots, \beta_l\}= R_I(\Phi^+ \cap w \Phi^-)$.
\end{corollary}
\begin{proof}
Induction on $l$. If $l=0$, there is nothing to do. If $l>0$, 
then we can find the last edge $\sigma_{\beta_l} w \xrightarrow{\beta_l} w$
for the path
by Lemma \ref{antisimplesuppexists}.
Then, by 
Lemma \ref{admissibilitycrit},
$\Phi^+ \cap w \Phi^- = \{\beta_l\} \sqcup (\Phi^+ \cap \sigma_{\beta_l}w \Phi^-)$, 
so
$R_I(\Phi^+ \cap w \Phi^-) = \{\beta_l\} \sqcup R_I(\Phi^+ \cap \sigma_{\beta_l}w \Phi^-)$.
We can construct the rest of the path by the induction hypothesis.
\end{proof}

\begin{lemma}\label{subsetequalimpliesprojection}
Let $A \subseteq \Phi^+$, 
let $f$ be a simple root distribution on $A$ with vector of D-multiplicities $n_\bullet$.
If $I \subseteq A$ is a subset such that $|R_I(A)| = |p_I(n_\bullet)|$, then 
the vector of D-multiplicities of $f|_{R_I(A)}$ is $p_I(n_\bullet)$.
\end{lemma}
\begin{proof}
Denote the vector of D-multiplicities of $f|_{R_I(A)}$ by $m_\bullet$.
By the definition of the vector of D-multiplicities,
for every $\alpha_i \in I$, $f$ takes value $\alpha_i$ on $A$ exactly $n_i$ times.
By the definitions of a distribution and of a restriction, if $\alpha_i \in I$, then $f$ cannot take value $\alpha_i$ 
outside $R_I(A)$. Hence, if $\alpha_i \in I$, then $f$ takes value $\alpha_i$ on $R_I(A)$ also exactly $n_i$ times.
In other words, $m_i = n_i$ for all $\alpha_i \in I$. 

We have $|R_I(A)|=|p_I(n_\bullet)|=\sum_{i: \alpha_i \in I} n_i = \sum_{i: \alpha_i \in I} m_i$.
By the definition of the vector of D-multiplicities, we also have 
$|R_I(A)|=\sum_{1 \le i \le r} m_i$. Hence, $m_i = 0$ if $\alpha_i \notin I$, and $m_\bullet = p_I(n_\bullet)$.
\end{proof}

\begin{proof}[Last steps of the proof of Proposition \ref{ClusterNecessary}]
We do induction on $|n_\bullet|$. If $n_\bullet = 0$, then, since $\CH^*(G/B)$ is a graded ring, we can only have $c_{w,n_\bullet}=1$ for $w=1_W$.
In this case, $\Phi^+ \cap w \Phi^- = \varnothing$, and $(\varnothing, 0)$ is a clusterable configuration.

If $n_\bullet \ne 0$, then $\supp n_\bullet \ne \varnothing$. 
By Proposition \ref{admissibilitycrit} (\ref{excessiveconfiguration}),
$(\Phi^+ \cap w \Phi^-, n_\bullet)$ is an excessive configuration. 
In particular, $\Phi^+ \cap w \Phi^-=R_{\supp n_\bullet}(\Phi^+ \cap w \Phi^-)$ and 
$|\Phi^+ \cap w \Phi^-|=|n_\bullet|=|p_{\supp n_\bullet}(n_\bullet)|$.
So, there exists at least one nonempty subset $I' \subseteq \supp n_\bullet$ (namely, $I'=\supp n_\bullet$) such that 
$|R_{I'}(\Phi^+ \cap w \Phi^-)|=|p_{I'}(n_\bullet)|$.
Hence, we can choose a minimal by inclusion nonempty subset $I \subseteq n_\bullet$ such that 
$|R_I(\Phi^+ \cap w \Phi^-)|=|p_I(n_\bullet)|$.
Then by Lemma \ref{excessivesubsetiscluster}, 
$(R_I(\Phi^+ \cap w \Phi^-), p_I(n_\bullet))$ is a cluster.

Next, by Corollary \ref{antireducedsortingprefixexists}, 
there exists an antisimple path $u \xrightarrow{\beta_1} \ldots \xrightarrow{\beta_l} w$ 
such that
$\{\beta_1, \ldots, \beta_l\}= R_I(\Phi^+ \cap w \Phi^-)$.
We are going to apply Corollary \ref{antireducedsortingprefixmulttwopropagates}.

By Proposition \ref{sortabilitycriterium} (\ref{distributionexists}), there exists a 
simple root distribution $f$ on $\Phi^+ \cap w \Phi^-$ with vector of D-multiplicities $n_\bullet$.
By Lemma \ref{subsetequalimpliesprojection}, the restriction of $f$ to 
$R_I(\Phi^+ \cap w \Phi^-) = \{\beta_1, \ldots, \beta_l\}$
has vector of D-multiplicities $p_I(n_\bullet)$.
By Corollary \ref{antireducedsortingprefixmulttwopropagates}, 
$c_{w,n_\bullet} \ge c_{u,n_\bullet - p_I(n_\bullet)} > 0$.
Note that $n_\bullet - p_I(n_\bullet) = p_{\supp n_\bullet \setminus I}(n_\bullet)$.
We have $c_{w,n_\bullet}=1$, so the only possibility for 
$c_{u,n_\bullet - p_I(n_\bullet)} = c_{u,p_{\supp n_\bullet \setminus I}(n_\bullet)}$ is 
$c_{u,p_{\supp n_\bullet \setminus I}(n_\bullet)}=1$.

By the induction hypothesis, 
$(\Phi^+ \cap u \Phi^-, p_{\supp n_\bullet \setminus I}(n_\bullet))$
is clusterable.
By Lemma \ref{antisimplelabelsisphiwphi} (\ref{antisimplepathinequality}), 
$\Phi^+ \cap w \Phi^- = \{\beta_1, \ldots, \beta_l\} \sqcup (\Phi^+ \cap u \Phi^-)$, 
so $\Phi^+ \cap u \Phi^- = (\Phi^+ \cap w \Phi^-) \setminus R_I(\Phi^+ \cap w \Phi^-) = 
R_{\supp n_\bullet \setminus I,I}(\Phi^+ \cap w \Phi^-)$
(recall that for every $\alpha \in \Phi^+ \cap w \Phi^-$
we have $\supp \alpha \cap \supp n_\bullet \ne \varnothing$, see Remark \ref{excessivesimpleproperties} (\ref{excessiveintersectsupp})).
So, 
the configuration
$(R_{\supp n\setminus I,I}(\Phi^+ \cap w \Phi^-), p_{\supp n_\bullet \setminus I}(n_\bullet))$ is clusterable.
By Lemma \ref{clusterizableinductive} (\ref{clusterizableinductivesingle}), 
$(\Phi^+ \cap w \Phi^-, n_\bullet)$ is clusterable.
\end{proof}

\section{Sufficient condition for $c_{w,n_\bullet}=1$}
Our goal in this section is to prove the following proposition, which says that the necessary condition for $c_{w,n_\bullet}=1$ 
we have found in the previous section is also sufficient.
\begin{proposition}\label{ClusterSufficient}
Let $G$ be a semisimple split algebraic group with simply laced Dynkin diagram.
Let $w \in W$ and $n_\bullet \in \ZZ_{\ge 0}^r$
be such that the configuration $(\Phi^+ \cap w \Phi^-, n_\bullet)$ is clusterable.
Then $c_{w,n_\bullet}=1$.
\end{proposition}
In the proof of this proposition, instead of using enhanced paths,
it is more convenient to use 
Corollary \ref{pieriformulacwn} directly.

The proof will consist of two parts. First, we are going to prove that
if we choose $\alpha$ and $\alpha_i$ the right way, then we will have only one nonzero summand in (\ref{pieriformulacwnformula}).
Second, we will check that the configuration $(\Phi^+ \cap \sigma_\alpha w \Phi^-, n_\bullet - e_i)$
corresponding to this summand is clusterable, which will allow us to prove Proposition \ref{ClusterSufficient} by induction on $\ell(w)$.

The first part of the proof begins
with the following lemma, which is quite similar to Lemma \ref{freechoiceuniquemax}.
\begin{lemma}\label{clusteruniquewmax}
Let $w \in W$, $n_\bullet \in \ZZ_{\ge 0}^r$, and $I \subseteq \supp n_\bullet$ be such that 
$(R_I(\Phi^+ \cap w \Phi^-), p_I(n_\bullet))$ is a nonempty cluster.
Then there is a unique $\prec_w$-maximal element $\alpha \in R_I(\Phi^+ \cap w \Phi^-)$.
Moreover, $I \subseteq \supp \alpha$, 
and $\alpha \in -w \Pi$.
\end{lemma}
\begin{proof}
Let $\alpha \in R_I(\Phi^+ \cap w \Phi^-)$ be a $\prec_w$-maximal element.
Let $I' = \supp \alpha \cap I$. By Remark \ref{excessivesimpleproperties} (\ref{excessiveintersectsupp}), $I' \ne \varnothing$.

Like in the proof of Lemma \ref{freechoiceuniquemax}, let us first consider the set 
$R_{I'}(\Phi^+ \cap w \Phi^-)$. Let $\beta \in R_{I'}(\Phi^+ \cap w \Phi^-)$, $\beta \ne \alpha$.
Clearly, $\supp \beta \cap \supp \alpha \cap \supp p_I(n_\bullet) \ne \varnothing$, so by the definition of a cluster,
$(\alpha, \beta)=1$. Hence, $(w^{-1}\alpha, w^{-1}\beta)=1$, $\alpha$ and $\beta$ are $\prec_w$-comparable, and
$\beta \prec_w \alpha$ by the $\prec_w$-maximality of $\alpha$.

Also like in the proof of Lemma \ref{freechoiceuniquemax}, let us denote $J = I \setminus I'$ and check 
that $R_{I'}(\Phi^+ \cap w \Phi^-) \cap R_J(\Phi^+ \cap w \Phi^-) = \varnothing$.
Assume that there exists a root 
$\beta \in R_{I'}(\Phi^+ \cap w \Phi^-) \cap R_J(\Phi^+ \cap w \Phi^-)$. 
Then the analysis of the previous paragraph shows that 
$\beta \prec_w \alpha$ and $(\alpha, \beta)=1$.
Then $\beta -\alpha \in w \Phi^-$.

We 
have $\beta \in R_J(\Phi^+ \cap w \Phi^-)$, so, there exists a simple root $\alpha_j \in \supp \beta \cap J$.
In other words, $\alpha_j^*(\beta) > 0$. Also, $\supp \alpha \cap I = I'$. so $\supp \alpha \cap J = \varnothing$, and $\alpha_j^*(\alpha)=0$.
Hence, $\alpha_j^*(\beta - \alpha)>0$, $\beta - \alpha \in \Phi^+$, and $\beta - \alpha \in \Phi^+ \cap w \Phi^-$.
We also have $\alpha_j \in \supp (\beta - \alpha)$, $\alpha_j \in I$, so $\beta - \alpha \in R_I(\Phi^+ \cap w \Phi^-)$.
Finally, $(\alpha, \beta - \alpha)=-1$, and we have a contradiction with the definition of a cluster.
So, $R_{I'}(\Phi^+ \cap w \Phi^-) \cap R_J(\Phi^+ \cap w \Phi^-) = \varnothing$.
By Lemma \ref{strictlyexcessivesplittingimpossible}, we cannot have $J \ne \varnothing$.

So, $J = \varnothing$, $I = I'$ (which also implies $I \subseteq \supp \alpha$), 
and in the beginning of the proof, we have already checked that $\alpha$
is the unique $\prec_w$-maximal element of $R_{I'} (\Phi^+ \cap w \Phi^-)=R_I(\Phi^+ \cap w \Phi^-)$.

Finally, by Lemma \ref{antisimplesuppexists}, there exists a root $\alpha' \in R_I(\Phi^+ \cap w \Phi^-) \cap w(-\Pi)$.
By Lemma \ref{antisimpleisminimal} (\ref{alphaminimal}), $\alpha'$ is a $\prec_w$-maximal element of the whole 
$\Phi^+ \cap w \Phi^-$. Since we already know that a $\prec_w$-maximal element in 
$R_I(\Phi^+ \cap w \Phi^-)$ is unique, we get $\alpha'=\alpha$, $\alpha \in -w\Pi$.
\end{proof}

\begin{example}
Despite 
Lemma \ref{clusteruniquewmax}
and its proof are very similar to Lemma \ref{freechoiceuniquemax}, 
the root $\alpha$ found here doesn't have to be a $\prec$-maximal element of $R_I(\Phi^+ \cap w \Phi^-)$
if $I \ne \supp n_\bullet$ (if $I = \supp n_\bullet$, then $\alpha$ is in fact $\prec$-maximal, but we don't need this).
For example, if $G=SL_3$, $w=[3,2,1]$, $I=\{\alpha_1\}$, then $R_I(\Phi^+ \cap w \Phi^-)=\{\alpha_1, \alpha_1+\alpha_2\}$, 
and the $\prec_w$-maximal element of this set is $\alpha_1$, while the $\prec$-maximal element is $\alpha_1 + \alpha_2$.
\end{example}

\begin{lemma}\label{bcbig}
Let $w \in W$, $n_\bullet \in \ZZ_{\ge 0}^r$, and $I \subseteq \supp n_\bullet$ be such that 
$(R_I(\Phi^+ \cap w \Phi^-), p_I(n_\bullet))$ is a nonempty cluster.
Suppose that 
$\beta \in R_I(\Phi^+ \cap w \Phi^-)$ 
is \emph{not} the 
$\prec_w$-maximal element of $R_I(\Phi^+ \cap w \Phi^-)$.

Denote by $A_1$ (resp.\ $A_2$) the set of roots 
$\gamma \in R_I(\Phi^+ \cap w \Phi^-)$
such that $\beta \prec_w \gamma$
(resp.\ $(\beta, \gamma)=0$).
Then $|A_1\cup A_2| > |p_{I \setminus \supp \beta}(n_\bullet)|$.
\end{lemma}
\begin{proof}
Let us consider two cases: $I \not\subseteq \supp \beta$ and $I \subseteq \supp \beta$.

\emph{First}, suppose that 
$I \not\subseteq \supp \beta$, in other words, $I \setminus \supp \beta \ne \varnothing$.
In this case, let us check that $A_1 \cup A_2$ contains 
$R_{I\setminus \supp \beta} (\Phi^+ \cap w \Phi^-)$.
Assume the contrary: assume that there exists a root 
$\gamma \in R_{I\setminus \supp \beta} (\Phi^+ \cap w \Phi^-)$
such that 
$\gamma \notin A_1 \cup A_2$.
Fix a simple root 
$\alpha_i \in \supp \gamma \cap (I\setminus \supp \beta)$.
Clearly, 
$\gamma \in R_I(\Phi^+ \cap w \Phi^-)$, 
so by the definition of a cluster, 
$(\beta,\gamma) \ge 0$. 
Since 
$\gamma \notin A_2$, 
we have 
$(\beta,\gamma) > 0$.
Also, 
$\alpha_i \notin \supp \beta$, $\alpha_i \in \supp \gamma$, so $\beta \ne \gamma$ and $(\beta, \gamma) \ne 2$.

So, the only remaining possibility for 
$(\beta,\gamma)$ is $(\beta,\gamma)=1$. 
By Lemma \ref{sumexists}, 
$\gamma - \beta \in \Phi$,
and
$\beta$ and $\gamma$ 
are $\prec_w$-comparable.
Since $\gamma \notin A_1$, we have $\gamma \prec_w \beta$ and 
$\gamma - \beta \in w \Phi^-$.
Next, 
$\alpha_i^*(\gamma - \beta)>0$, so $\gamma - \beta \in \Phi^+$, and $\gamma-\beta \in R_I(\Phi^+ \cap w \Phi^-)$.
But 
$(\beta,\gamma-\beta)=-1$, 
a contradiction with the definition of a cluster.


Therefore, 
$R_{I\setminus \supp \beta} (\Phi^+ \cap w \Phi^-) \subseteq A_1 \cup A_2$, 
$|A_1 \cup A_2| \ge |R_{I\setminus \supp \beta} (\Phi^+ \cap w \Phi^-)|$. 
We cannot have 
$I\setminus \supp \beta = I$ since $\beta \in R_I(\Phi^+ \cap w \Phi^-)$.
So, by the definition of a strictly excessive configuration, 
$|R_{I\setminus \supp \beta} (\Phi^+ \cap w \Phi^-)| > |p_{I \setminus \supp \beta}(n_\bullet)|$, 
and we are done if 
$I \not\subseteq \supp \beta$.

\emph{Second}, if 
$I \subseteq \supp \beta$, then $I \setminus \supp \beta = \varnothing$ 
and 
$|p_{I \setminus \supp \beta}(n_\bullet)|=0$. 
On the other hand, by Lemma \ref{clusteruniquewmax}, 
there exists a unique $\prec_w$-maximal element 
$\alpha \in R_I(\Phi^+ \cap w \Phi^-)$.
We have supposed that $\alpha \ne \beta$, so 
$\beta \prec_w \alpha$, 
$\alpha \in A_2$, and 
$|A_1 \cup A_2| > 0$.
\end{proof}

Recall that if 
$\sigma_\beta w \xrightarrow{\beta} w$ 
is an edge in the Bruhat graph, then 
in Definition \ref{phipm} and in Lemma \ref{admissibilitycrit}, we have constructed a bijection 
$\psi_{\beta} \colon (\Phi^+ \cap w \Phi^-) \setminus \{\beta\} \to \Phi^+ \cap \sigma_\beta w \Phi^-$.
\begin{lemma}\label{continuenotbc}
Let $w \in W$, $n_\bullet \in \ZZ_{\ge 0}^r$, and $I \subseteq \supp n_\bullet$ be such that 
$(R_I(\Phi^+ \cap w \Phi^-), p_I(n_\bullet))$ is a nonempty cluster.
Suppose we have $\sigma_\beta w \xrightarrow{\beta} w$ 
for some $\beta\in R_I(\Phi^+ \cap w \Phi^-)$.

Denote by $A_1$ (resp.\ $A_2$) the set of roots 
$\gamma \in R_I(\Phi^+ \cap w \Phi^-)$
such that $\beta \prec_w \gamma$
(resp.\ $(\beta, \gamma)=0$).
Then 
$R_{\supp \beta \cap I} (\Phi^+ \cap \sigma_\beta w \Phi^-) \cap \psi_{\beta} (A_1 \cup A_2)=\varnothing$.
\end{lemma}
\begin{proof}
If $\gamma \in A_2$, then 
$(\gamma, \beta)=0$ and $\psi_{\beta}(\gamma)=\gamma$.
By the definition of a cluster, 
$\supp \beta \cap \supp \gamma \cap I = \varnothing$.
Therefore, 
$\psi_{\beta}(\gamma)=\gamma \notin R_{\supp \beta \cap I}(\Phi^+ \cap \sigma_\beta w \Phi^-)$.

Now consider a root 
$\gamma \in A_1 \setminus A_2$.
By the definition of a cluster, 
$(\beta,\gamma) \ge 0$, so
it follows from 
$\gamma \in A_1 \setminus A_2$ 
that
$(\beta,\gamma)=1$.
Then 
$\gamma-\beta \in \Phi$, and 
$\gamma \in A_1$ means that
$\beta - \gamma \in w\Phi^-$.
If $\beta-\gamma \in \Phi^+$, then $\beta-\gamma \in \Phi^+\cap w\Phi^-$, $\gamma \in \Phi^+\cap w\Phi^-$, 
and we cannot have 
$\sigma_\beta w \xrightarrow{\beta} w$ 
by Lemma \ref{admissibilitycrit}, a contradiction.
So, 
$\beta-\gamma \in \Phi^-$, 
$\sigma_\beta \gamma = \gamma - \beta \in \Phi^+ \cap w \Phi^+$,
and 
$\psi_{\beta}(\gamma)=\gamma-\beta$.

Assume that 
$\gamma-\beta \in R_{\supp \beta \cap I}(\Phi^+ \cap \sigma_\beta w \Phi^-)$.
Then there exists a simple root 
$\alpha_i \in \supp (\gamma - \beta) \cap \supp \beta \cap I$.
Then $\alpha_i^*(\gamma - \beta) \ge 1$, $\alpha_i^*(\beta) \ge 1$, 
so $\alpha_i^*(\gamma) \ge 2$, 
and we have 
a contradiction with the definition of 
a cluster.
\end{proof}

\begin{lemma}\label{sortonlywithprecwmax}
Let $w \in W$, $n_\bullet \in \ZZ_{\ge 0}^r$, and $I \subseteq \supp n_\bullet$ be such that 
$(R_I(\Phi^+ \cap w \Phi^-), p_I(n_\bullet))$ is a nonempty cluster.
Suppose we have 
$\sigma_\beta w \xrightarrow{\beta} w$ 
for some 
$\beta \in R_I(\Phi^+ \cap w \Phi^-)$, 
but 
$\beta$ 
is not the \emph{not} the 
$\prec_w$-maximal element of $R_I(\Phi^+ \cap w \Phi^-)$.

Let 
$\alpha_i\in \supp \beta \cap I$.
Then 
$c_{\sigma_\beta w, n_\bullet - e_i}=0$.
\end{lemma}
\begin{proof}
By Lemma \ref{admissibilitycrit}, we have 
$\psi_{\beta}(\gamma) \preceq \gamma$ for all 
$\gamma \in (\Phi^+ \cap w \Phi^-) \setminus \{\beta\}$.
So, 
$R_{\supp \beta \cap I} (\Phi^+ \cap \sigma_\beta w \Phi^-) \subseteq 
R_I (\Phi^+ \cap \sigma_\beta w \Phi^-) \subseteq 
\psi_{\beta} (R_I (\Phi^+ \cap w \Phi^-) \setminus \{\beta\})$.

Like in the previous few lemmas, denote by 
$A_1$ (resp.\ $A_2$) the set of roots 
$\gamma \in R_I(\Phi^+ \cap w \Phi^-)$
such that $\beta \prec_w \gamma$
(resp.\ $(\beta, \gamma)=0$).
Clearly, 
$A_1 \cup A_2 \subseteq R_I (\Phi^+ \cap w \Phi^-) \setminus \{\beta\}$.

By the definition of a cluster (and of a strictly excessive configuration), 
we have $|R_I (\Phi^+ \cap w \Phi^-)|=|p_I(n_\bullet)|$, so
$|R_I (\Phi^+ \cap w \Phi^-) \setminus \{\beta\}| = |p_I(n_\bullet)|-1$.
By Lemma \ref{bcbig}, 
$|A_1 \cup A_2| > |p_{I \setminus \supp \beta} (n_\bullet)|$, 
and by Lemma \ref{continuenotbc}, 
$R_{\supp \beta \cap I} (\Phi^+ \cap \sigma_\beta w \Phi^-) \cap \psi_{\beta} (A_1 \cup A_2)=\varnothing$.
Therefore, 
$|R_{\supp \beta \cap I} (\Phi^+ \cap \sigma_\beta w \Phi^-)| < |p_I(n_\bullet)|-1-|p_{I \setminus \supp \beta} (n_\bullet)|
= |p_{\supp \beta \cap I} (n_\bullet)|-1 = |p_{\supp \beta \cap I} (n_\bullet-e_i)|$
(recall that $\alpha_i \in \supp \beta \cap I$).

Finally, denote 
$J = I \cap \supp \beta \cap \supp (n_\bullet - e_i)$. 
In general, $J$ doesn't have to be equal to 
$I \cap \supp \beta$ 
(if $n_i=1$, then $J$ is a proper subset, 
otherwise, they are indeed equal), but we always have 
$J \subseteq \supp \beta \cap I$ and 
$p_J(n_\bullet - e_i) = p_{I \cap \supp \beta} (n_\bullet - e_i)$.
Also, since 
$J \subseteq \supp \beta \cap I$, we have 
$R_{J} (\Phi^+ \cap \sigma_\beta w \Phi^-) \subseteq 
R_{\supp \beta \cap I} (\Phi^+ \cap \sigma_\beta w \Phi^-)$.
Therefore, 
$|R_{J} (\Phi^+ \cap \sigma_\beta w \Phi^-)| < |p_J(n_\bullet - e_i)|$, and
$(\Phi^+ \cap \sigma_\beta w \Phi^-, n_\bullet - e_i)$ 
is not 
an excessive configuration.
By Lemma \ref{sortabilitycriterium} (\ref{excessiveconfiguration}), 
$c_{\sigma_\beta w, n_\bullet - e_i} = 0$.
\end{proof}

For the second part of the proof of Proposition \ref{ClusterSufficient}, we are going to show that if we choose 
$I$ and $\alpha$ as in Lemma \ref{clusteruniquewmax} and an arbitrary $\alpha_i \in \supp \alpha \cap I$, 
then $((\Phi^+ \cap w \Phi^-) \setminus \{\alpha\}, n_\bullet - e_i)$ is a clusterable configuration.
This claim might look obvious intuitively, but a rigorous proof requires some technical efforts.
\begin{lemma}\label{clusterclusterizable}
Let $A \subseteq \Phi^+$, let $f$ be a distribution of simple roots on $A$, and let $n_\bullet$ be the vector of 
D-multiplicities of $f$. 
Suppose that $(A, n_\bullet)$ satisfies all conditions in the definition of a cluster except strict excessiveness.
In other words, suppose that: 
for all $\alpha \in A$ and for all $\alpha_i \in \supp n_\bullet$ we have $\alpha_i^*(\alpha) \le 1$;
for all $\alpha, \beta \in A$ we have $(\alpha, \beta) \ge 0$; 
and if for some $\alpha, \beta \in A$ we have $(\alpha, \beta) = 0$, then $\supp \alpha \cap \supp \beta \cap \supp n_\bullet = \varnothing$.

Then the configuration $(A, n_\bullet)$ is clusterable.
\end{lemma}

\begin{proof}
Induction on $|A|$. For $A=\varnothing$, everything is clear.

By Lemma \ref{hallintermsofrpre}, the configuration $(A, n_\bullet)$ is 
(not necessarily strictly)
excessive.
Let $I$ be a minimal by inclusion nonempty subset of $\supp n_\bullet$ such that 
$|R_I(A)|=|p_I(n_\bullet)|$.
Then it follows directly from the definitions of 
excessive and strictly excessive 
configurations
that $(R_I(A),p_I(n_\bullet))$ is strictly excessive. 
And then it follows from the assumptions of the lemma that 
$(R_I(A),p_I(n_\bullet))$ is a cluster.

By Lemma \ref{subsetequalimpliesprojection}, the vector of D-multiplicities of $f|_{R_I(A)}$ is $p_I(n_\bullet)$.
So, the vector of D-multiplicities of the restriction of $f$ to 
$A \setminus R_I(A)$ 
is 
$n_\bullet - p_I(n_\bullet) = p_{\supp n_\bullet \setminus I} (n_\bullet)$.
By the induction hypothesis, 
$(A \setminus R_I(A), p_{\supp n \setminus I} (n_\bullet))$ 
is clusterable.
By the definition of 
an excessive configuration, 
$A = R_{\supp n_\bullet}(A)$, so 
$A \setminus R_I(A) = R_{\supp n_\bullet \setminus I, I}(A)$.
By Lemma \ref{clusterizableinductive} (\ref{clusterizableinductivesingle}), 
$(A, n_\bullet)$ is clusterable.
\end{proof}

\begin{lemma}\label{clusterizableremoveone}
Let $(A, n_\bullet)$ be a clusterable configuration with $A\ne \varnothing$, $n_\bullet \ne 0$.
Let $I \subseteq \supp n_\bullet$ be a nonempty subset such that
$(R_I(A), p_I(n_\bullet))$ is a cluster and 
$(R_{\supp n_\bullet \setminus I,I} (A), p_{\supp n_\bullet \setminus I}(n_\bullet))$ is a clusterable
configuration.

Let $\alpha\in R_I(A)$ and $\alpha_i \in \supp \alpha \cap I$. 
Then
$(A \setminus\{\alpha\}, n_\bullet - e_i)$ 
is a clusterable configuration.
\end{lemma}

\begin{proof}
By Lemma \ref{excessiveisfreechoice}, there exists a distribution of simple roots $f$ on $R_I(A)$ 
with vector of D-multiplicities $p_I(n_\bullet)$ and such that $f(\alpha)=\alpha_i$.
Then $f|_{R_I(A) \setminus \{ \alpha \}}$ has vector of D-multiplicities
$p_I(n_\bullet-e_i)$.
Denote $n'_\bullet=n_\bullet-e_i$, 
$I'=\supp (p_I(n'_\bullet))$
(like in the proof of Lemma \ref{sortonlywithprecwmax}, 
$I'$ fails to be equal to $I$
if $n_i=1$).
By Lemma \ref{clusterclusterizable}, 
$(R_I(A) \setminus \{ \alpha \}, p_I(n'_\bullet))$
is a clusterable configuration.
In particular,
$R_I(A) \setminus \{ \alpha \}=R_{I'}(R_I(A) \setminus \{ \alpha \})$.
Denote $A'=A \setminus \{\alpha\}$.
By the definition of restriction, we can write
$R_{I'}(R_I(A) \setminus \{ \alpha \})=R_{I'}(A \setminus \{\alpha\})=R_{I'}(A')$.
Finally, $p_I(n'_\bullet)=p_{I'}(p_I(n'_\bullet))=p_{I'}(n'_\bullet)$, and we can say that 
$(R_{I'}(A'), p_{I'}(n'_\bullet))$
is a clusterable configuration.

Next, 
we know that 
$(R_{\supp n_\bullet \setminus I,I} (A), p_{\supp n_\bullet \setminus I}(n_\bullet))$ is a clusterable
configuration, and now we need to slightly rewrite this statement.
Denote $J=\supp n_\bullet \setminus I$. We also have 
$J=\supp (n'_\bullet) \setminus I'$ 
and $p_J(n_\bullet)=p_J(n'_\bullet)$.
Also, recall that $\alpha \in R_I(A)$ and that $R_I(A) \setminus \{ \alpha \}=R_{I'}(A')$.
We have
\begin{equation*}
R_{J,I} (A)=R_{J} (A \setminus R_I(A))=R_{J} ((A \setminus \{\alpha\}) \setminus (R_I(A) \setminus \{\alpha\}))=
R_{J} (A' \setminus R_{I'}(A '))=
R_{J,I'} (A'),
\end{equation*}
and we can say that 
$(R_{J,I'} (A'), p_J(n'_\bullet))$ 
is a clusterable configuration.

Finally, $(A, n_\bullet)$ is clusterable, so $A=R_{\supp n_\bullet}(A)$.
Also, $\alpha \notin R_{J,I} (A)$, and we can write
\begin{multline*}
A'=A \setminus \{\alpha\} = (R_{\supp n_\bullet}(A)) \setminus \{\alpha\} = 
(R_I(A) \cup R_{J,I}(A)) \setminus \{\alpha\} = \\
(R_I(A) \setminus \{\alpha\}) \cup R_{J,I}(A) = 
R_{I'}(A') \cup R_{J,I'}(A') = R_{\supp n'_\bullet} (A').
\end{multline*}
By Lemma \ref{clusterizableinductive} (\ref{clusterizableinductivemultiple}), the configuration
$(A',n'_\bullet)=(A \setminus\{\alpha\}, n_\bullet - e_i)$ 
is clusterable.
\end{proof}

\begin{proof}[Last steps of the proof of Proposition \ref{ClusterSufficient}]
We do induction on $\ell(w)$. If $\ell(w)=0$, then everything is clear.

If $\ell(w) > 0$, then by Lemma \ref{clusterizableinductive} (\ref{clusterizableinductivesingle}), 
there exists a nonempty subset $I \subseteq \supp n_\bullet$
such that 
$(R_I(\Phi^+ \cap w \Phi^-), p_I(n_\bullet))$ is a cluster and 
$(R_{\supp n_\bullet \setminus I,I} (\Phi^+ \cap w \Phi^-), p_{\supp n_\bullet \setminus I}(n_\bullet))$ is a clusterable
configuration.
By Lemma \ref{clusteruniquewmax}, there is a unique $\prec_w$-maximal element $\alpha \in R_I(\Phi^+ \cap w \Phi^-)$.
Lemma \ref{clusteruniquewmax} also says that $I \subseteq \supp \alpha$ and $\alpha \in -w \Pi$.
Fix an arbitrary $\alpha_i \in I$.


We are going to use Corollary \ref{pieriformulacwn}. Let us check that all terms in the right-hand side of (\ref{pieriformulacwnformula}), 
except for the term with $\beta=\alpha$, vanish. Indeed, 
for all terms we have $\beta \in \Phi^+ \cap w \Phi^-$ by Lemma \ref{admissibilitycrit}.
If $\alpha_i \notin \supp \beta$, 
then $\langle \varpi_i, \beta \rangle=\alpha_i^*(\beta)=0$. And if $\alpha_i \in \supp \beta$, 
then $\beta \in R_I (\Phi^+ \cap w \Phi^-)$, so we have $c_{\sigma_\beta w, n_\bullet - e_i} = 0$ by Lemma \ref{sortonlywithprecwmax}.

Finally, for the term with $\beta=\alpha$, we have $\Phi^+ \cap \sigma_\alpha w \Phi^- = (\Phi^+ \cap w \Phi^-) \setminus \{\alpha\}$
by Lemma \ref{admissibilitycrit} since $\alpha \in -w \Pi$.
By Lemma \ref{clusterizableremoveone}, 
the configuration $((\Phi^+ \cap w \Phi^-) \setminus \{\alpha\}, n_\bullet - e_i)$ is clusterable.
By the induction hypothesis, $c_{\sigma_\alpha w, n_\bullet - e_i}=1$. And 
$\langle \varpi_i, \alpha \rangle=\alpha_i^*(\alpha)=1$ by the definition of a cluster.
So, the only nonzero summand in (\ref{pieriformulacwnformula}) equals 1, and $c_{w,n_\bullet}=1$.
\end{proof}

\section{Criterion for $c_{w,n_\bullet}=1$}
\begin{theorem}\label{MainTheorem}
Let $G$ be a semisimple split algebraic group with simply laced Dynkin diagram.
Let $w \in W$, $n_\bullet \in \ZZ_{\ge 0}^r$.
Then $c_{w,n_\bullet}=1$ if and only if the configuration $(\Phi^+ \cap w \Phi^-, n_\bullet)$ is 
clusterable. If these conditions hold, then $\ell(w)=|n_\bullet|$.
\end{theorem}
\begin{proof}
Follows directly from Proposition \ref{ClusterNecessary}, 
Proposition \ref{ClusterSufficient}, and the gradedness of $\CH^*(G/B)$.
\end{proof}

In Lemma \ref{freechoicetwo}, we said that if the configuration $(\Phi^+ \cap w \Phi^-,n_\bullet)$
has large essential coordinates and is strictly excessive, then $c_{w,n_\bullet} \ge 2$.
We have also said in Proposition \ref{freechoiceorthogonalnooverlap} that if $(\Phi^+ \cap w \Phi^-,n_\bullet)$
is again strictly excessive and there are two orthogonal roots $\alpha,\beta \in \Phi^+ \cap w \Phi^-$
such that $\supp \alpha \cap \supp \beta \cap \supp n_\bullet \ne \varnothing$, then also $c_{w,n_\bullet} \ge 2$.
Now we are ready to see why the strict excessiveness condition in these two statements cannot be removed
or even replaced with 
(not necessarily strict)
excessiveness.
\begin{example}\label{largeessntialoutsideinvolved}
Let $G$ be a group of type $D_4$, $w=\sigma_2 \sigma_3 \sigma_4 \sigma_2 \sigma_1$ 
(recall that we enumerate the vertices of the Dynkin diagram as in \cite{bou}).
Then one can check by a direct computation (for example, using \cite[Lemma 2.2]{bgg})
that $\Phi^+ \cap w \Phi^-=\{\alpha_2, \alpha_2+\alpha_3, \alpha_2+\alpha_4, \alpha_2 + \alpha_3 + \alpha_4, \alpha_1 + 2\alpha_2 +\alpha_3 + \alpha_4\}$.
Let $n_\bullet=(1,1,1,2)$.

Let us check that the configuration $(\Phi^+ \cap w \Phi^-, n_\bullet)$ is clusterable.
Let $I_1 = \{\alpha_1\}$, $I_2 = \{\alpha_4\}$, $I_3 = \{\alpha_3\}$, and $I_4=\{\alpha_2\}$.
Then $R_{I_1, \varnothing} (\Phi^+ \cap w \Phi^-)=\{\alpha_1 + 2\alpha_2 +\alpha_3 + \alpha_4\}$, $p_{I_1}(n_\bullet)=e_1$;
$R_{I_2, I_1}(\Phi^+ \cap w \Phi^-)=\{\alpha_2+\alpha_4, \alpha_2+\alpha_3+\alpha_4\}$, $p_{I_2}(n_\bullet)=2e_4$;
$R_{I_3, I_1 \cup I_2} (\Phi^+ \cap w \Phi^-)= \{\alpha_2+\alpha_3\}$, $p_{I_3}(n_\bullet)=e_3$;
$R_{I_4, I_1 \cup I_2 \cup I_3} (\Phi^+ \cap w \Phi^-)= \{\alpha_2\}$, $p_{I_4}(n_\bullet)=e_2$.
So, $(\Phi^+ \cap w \Phi^-, n_\bullet)$ is indeed clusterable, and $c_{w,n_\bullet}=1$ by Theorem \ref{MainTheorem}.

On the other hand, we have $\alpha_2 \in \supp n_\bullet$ and $\alpha_2^* (\alpha_1 + 2\alpha_2 +\alpha_3 + \alpha_4)=2$
(so, the configuration $(\Phi^+ \cap w \Phi^-, n_\bullet)$ has large essential coordinates).
Also, we have $(\alpha_2+\alpha_3, \alpha_2+\alpha_4)=0$ and $\supp (\alpha_2+\alpha_3) \cap \supp (\alpha_2+\alpha_4) \cap \supp n_\bullet=\{\alpha_2\}$.
And $(\Phi^+ \cap w \Phi^-, n_\bullet)$ is even 
excessive 
(this can be checked directly by hand or follows from Proposition \ref{sortabilitycriterium} (\ref{excessiveconfiguration})
since we already know that $c_{w,n_\bullet}>0$). 
But $(\Phi^+ \cap w \Phi^-, n_\bullet)$ is not strictly excessive.
\end{example}
In the definition of a cluster, we have the condition $\supp \alpha \cap \supp \beta \cap \supp n_\bullet = \varnothing$ for pairs of 
orthogonal roots, and this condition might look complicated.
The following example shows that we cannot exclude all pairs of orthogonal roots 
in the definition of a cluster and even replace the condition 
$\supp \alpha \cap \supp \beta \cap \supp n_\bullet = \varnothing$ 
with a simpler-looking condition $\supp \alpha \cap \supp \beta = \varnothing$.
\begin{example}\label{orthogonalcannotbeweakened}
Again, let $G$ be a group of type $D_4$.
Also, like in the previous example, let $w=\sigma_2 \sigma_3 \sigma_4 \sigma_2 \sigma_1$
(then $\Phi^+ \cap w \Phi^-=\{\alpha_2, \alpha_2+\alpha_3, \alpha_2+\alpha_4, \alpha_2 + \alpha_3 + \alpha_4, \alpha_1 + 2\alpha_2 +\alpha_3 + \alpha_4\}$), 
but this time, let $n_\bullet=(0,1,2,2)$.

Let $I_1=\{\alpha_3, \alpha_4\}$, $I_2=\{\alpha_2\}$.
Then 
$R_{I_1, \varnothing} (\Phi^+ \cap w \Phi^-)=\{\alpha_2+\alpha_3, \alpha_2+\alpha_4, \alpha_2 + \alpha_3 + \alpha_4, \alpha_1 + 2\alpha_2 +\alpha_3 + \alpha_4\}$,
$p_{I_1}(n_\bullet)=2e_3+2e_4$;
$R_{I_2, I_1}(\Phi^+ \cap w \Phi^-)=\{\alpha_2\}$, $p_{I_2}(n_\bullet)=e_2$.
We see that the configuration $(\Phi^+ \cap w \Phi^-, n_\bullet)$ is clusterable, 
so, by Theorem \ref{MainTheorem}, $c_{w,n_\bullet}=1$.
But $R_{I_1, \varnothing} (\Phi^+ \cap w \Phi^-)$ contains two orthogonal roots $\alpha_2+\alpha_3$ and $\alpha_2+\alpha_4$, 
and their supports even have non-empty intersection (they intersect at $\alpha_2$).
We have $\alpha_2 \notin p_{I_1}(n_\bullet) = \{\alpha_3,\alpha_4\}$, so 
our definition of a cluster works for 
$(R_{I_1, \varnothing} (\Phi^+ \cap w \Phi^-), p_{I_1}(n_\bullet))$.
But if we replaced ``$\supp \alpha \cap \supp \beta \cap \supp n_\bullet = \varnothing$''
with ``$\supp \alpha \cap \supp \beta = \varnothing$'' in the definition of a cluster, then such a modified definition 
would fail for 
$(R_{I_1, \varnothing} (\Phi^+ \cap w \Phi^-), p_{I_1}(n_\bullet))$
and would give us a wrong conclusion in Theorem \ref{MainTheorem}.
\end{example}
\begin{sideremark}
One could ask why, in Example \ref{orthogonalcannotbeweakened}, we used a group of type $D$ despite 
pairs of orthogonal roots exist in somewhat simpler root systems of type $A$ also
(unlike coordinates 2 with respect to $\Pi$, so in Example \ref{largeessntialoutsideinvolved}, type $D$ was clearly necessary).
The answer is that, in fact, in type $A$, it is not possible to find a cluster containing a pair of orthogonal 
roots. In other words, in type $A$, 
the whole conditions \ref{clusternoonetwozero} and \ref{clusternoinvolvedort} in Definition \ref{clusterdef}
can be replaced with ``if $\alpha,\beta\in A$, then $(\alpha,\beta) > 0$''. 
However, we will not need this fact later, so we don't prove it.
\end{sideremark}


\section{Simple clusters and degrees of multiplicity free monomials}
Theorem \ref{MainTheorem} makes it possible to check if $c_{w,n_\bullet}=1$. 
But clusterable configurations in general still can be too complicated to directly 
find the maximal degree of a multiplicity-free monomial (in other words, the maximal value of $\ell(w)$ or of $|n_\bullet|$
such that $c_{w,n_\bullet}=1$).
For this, we will need the following definitions of a simple cluster and a simply clusterable configuration.
\begin{definition}
A \emph{simple cluster} is a cluster of the form $(A, k e_i)$ ($k \in \ZZ_{\ge 0}$).
In other words, a configuration $(A, n_\bullet)$ is a simple cluster 
if there exists $i \in \{1,\ldots,r\}$ such that: $n_\bullet = n_i e_i$ (at most one nonzero coordinate), 
$|A|=n_i$,
$\alpha_i^*(\alpha)=1$ for all $\alpha \in A$, and 
$(\alpha,\beta)=1$ for all $\alpha,\beta \in A$, $\alpha \ne \beta$.
\end{definition}
\begin{definition}
A configuration $(A, n_\bullet)$ is called \emph{simply clusterable} if $A=R_{\supp n_\bullet}(A)$ and there exists 
an ordering $\supp n_\bullet = \{ \alpha_{i_1}, \ldots, \alpha_{i_k} \}$
such that 
for $I_m:= \{\alpha_{i_m}\}$ ($1 \le m \le k$), 
all configurations in (\ref{clusterization}) are (automatically simple) clusters.
In other words, all configurations 
$(R_{\{\alpha_{i_m}\}, \{\alpha_{i_1}, \ldots, \alpha_{i_{m-1}}\}}(A), p_{\{\alpha_{i_m}\}}(n_\bullet))$
for $1 \le m \le k$  must be simple clusters.
\end{definition}
Clearly, a simply clusterable configuration is clusterable.
The following two lemmas show how simply clusterable configurations help to find the maximal degree of a 
multiplicity-free monomial.
\begin{lemma}\label{clustersimplyclusterizable}
Let $A \subseteq \Phi^+$. Let $I \subseteq \Pi$ be a subset such that $R_I(A)=A$ and 
(like in the definition of a cluster): for every $\alpha_i \in I$ and $\alpha \in A$ we have $\alpha_i^*(\alpha) \le 1$;
for every $\alpha, \beta \in A$ we have $(\alpha,\beta) \ge 0$; and 
for every $\alpha, \beta \in A$ such that $(\alpha,\beta) = 0$ we have $\supp \alpha \cap \supp \beta \cap I = \varnothing$.

Then there exists $n_\bullet \in \ZZ_{\ge 0}^r$ such that $\supp n_\bullet \subseteq I$ 
and $(A, n_\bullet)$ is simply clusterable.
\end{lemma}
\begin{proof}
Induction on $|A|$. If $A=\varnothing$, everything is clear. Suppose $A \ne \varnothing$.

Since $R_I(A) = A \ne \varnothing$,
there exists a simple root $\alpha_i \in I$
such that $R_{\{\alpha_i\}}(A) \ne \varnothing$ (Remark \ref{excessivesimpleproperties} (\ref{restrictionunion})).
Choose one such $\alpha_i \in I$ and denote $k = |R_{\{\alpha_i\}}(A)|$.
Then $(R_{\{\alpha_i\}}(A), k e_i)$ is a simple cluster.

Denote $I'=I \setminus \{\alpha_i\}$ and $A'=R_{I', \{\alpha_i\}} (A)$.
By the induction hypothesis, there exists 
$n'_\bullet \in \ZZ_{\ge 0}^r$ such that $\supp n'_\bullet \subseteq I'$ 
and $(A', n'_\bullet)$ is simply clusterable.
Then $R_{\supp n'_\bullet}(A')=A'$. Set $n_\bullet = ke_i + n'_\bullet$, then 
$\supp n_\bullet = \{\alpha_i\} \sqcup \supp n'_\bullet$.
We have 
\begin{equation*}
A = R_I(A)=R_{\{\alpha_i\}}(A) \cup A' = R_{\{\alpha_i\}}(A) \cup R_{\supp n'_\bullet}(A') \subseteq 
R_{\supp n_\bullet}(A).
\end{equation*}
Also, $R_{\supp n_\bullet}(A) \subseteq A$ by the definition of restriction, so $A = R_{\supp n_\bullet}(A)$.
By Lemma \ref{clusterizableinductive} 
($\ref{clusterizableinductivemultiple} \Rightarrow \ref{clusterizableinductiveclusterizable}$ and the ``more precisely'' part), 
$(A,n_\bullet)$ is a simply clusterable configuration.
\end{proof}

\begin{lemma}\label{excessiveclusterisableissimple}
Let $(A, n_\bullet)$ be a clusterable configuration.
Then there exists $m_\bullet \in \ZZ_{\ge 0}^r$  such that
$\supp m_\bullet \subseteq \supp n_\bullet$, 
$|m_\bullet|=|n_\bullet|$, and 
$(A,m_\bullet)$ is a simply clusterable configuration.
\end{lemma}
\begin{proof}
The proof is very similar to the proof of the previous lemma (although refers to it also)
and also runs by 
induction on $|A|$. If $A=\varnothing$, everything is clear. Suppose $A \ne \varnothing$.

By Lemma \ref{clusterizableinductive} (\ref{clusterizableinductivesingle}),
there exists a nonempty subset $I\subseteq \supp n_\bullet$ such that
$(R_I(A), p_I(n_\bullet))$ is a cluster and 
$(R_{\supp n_\bullet \setminus I, I}(A), p_{\supp n_\bullet \setminus I}(n_\bullet))$ is a clusterable configuration.

By Lemma \ref{clustersimplyclusterizable}, there exists $n'_\bullet \in \ZZ_{\ge 0}^r$ 
such that $\supp n'_\bullet \subseteq I$ and $(R_I(A), n'_\bullet)$ is simply clusterable.
Denote $J = \supp n_\bullet \setminus I$. 
Since $I \subseteq \supp n_\bullet$, $I \ne \varnothing$, and $(R_I(A), p_I(n_\bullet))$ is a cluster, 
we have $|R_I(A)|=|p_I(n_\bullet)|>0$.
Since $A=R_{\supp n_\bullet}(A)$ by the definition of a clusterable configuration, 
we have $R_{J,I}(A) = A \setminus R_I(A)$, and $|R_{J,I}(A)| < |A|$.
So, we can apply the induction hypothesis to $(R_{J,I}(A), p_J(n_\bullet))$ 
and find $m'_\bullet \in \ZZ_{\ge 0}^r$ with 
$\supp m'_\bullet \subseteq J$
such that $(R_{J,I}(A),m'_\bullet)$ is a simply clusterable configuration.

Set $m_\bullet=n'_\bullet+m'_\bullet$. Then $\supp m_\bullet = \supp n'_\bullet \sqcup \supp m'_\bullet \subseteq I \sqcup J = \supp n_\bullet$.
The configuration $(R_I(A), n'_\bullet)$ is simply clusterable, so 
$R_{\supp n'_\bullet}(R_I(A))=R_I(A)$.
Also, $(R_{J,I}(A),m'_\bullet)$ is simply clusterable, so $R_{\supp m'_\bullet}(R_{J,I}(A))=R_{J,I}(A)$.
We have 
\begin{equation*}
A = R_I(A) \cup R_{J,I}(A) = R_{\supp n'_\bullet}(R_I(A)) \cup R_{\supp m'_\bullet}(R_{J,I}(A)) \subseteq 
R_{\supp m_\bullet}(A).
\end{equation*}
Like in the proof of previous lemma, $R_{\supp m_\bullet}(A) \subseteq A$, so in fact $A = R_{\supp m_\bullet}(A)$.
Again by Lemma \ref{clusterizableinductive} 
($\ref{clusterizableinductivemultiple} \Rightarrow \ref{clusterizableinductiveclusterizable}$ and the ``more precisely'' part), 
$(A,m_\bullet)$ simply clusterable.
Finally, by Lemma \ref{clusterizableequal}, $|n_\bullet|=|A|$ and $|m_\bullet|=|A|$.
\end{proof}
The following example shows that the vector $m_\bullet$ constructed in this proof may really have a smaller support than $\supp n_\bullet$
(in particular, the checks that $A = R_{\supp n_\bullet}(A)$ and $A = R_{\supp m_\bullet}(A)$ in the last two proofs were not redundant).
\begin{example}
Let $G=SL_4$, $w=[4,1,2,3]$, $n_\bullet = 2e_1 + e_2$. 
Then $\Phi^+ \cap w \Phi^- = \{\alpha_1, \alpha_1+\alpha_2, \alpha_1+\alpha_2+\alpha_3\}$, 
and $(\Phi^+ \cap w \Phi^-, n_\bullet)$ is a cluster, but not a simple cluster.
If, in the proof of Lemma \ref{clustersimplyclusterizable}, we start with $\alpha_i=\alpha_1$, 
then we will get $m_\bullet = 3e_1$, a vector with a smaller support that $\supp n_\bullet$.
(If we start with $\alpha_i=\alpha_2$ there, we will get a different $m_\bullet$, 
namely, $m_\bullet=e_1+2e_2$, with the same support as $\supp n_\bullet$.)
\end{example}

Now, after we have Lemma \ref{excessiveclusterisableissimple}, let us study simple clusters and simply clusterable configurations more carefully.
We start with a few formulas.
\begin{lemma}\label{differencesformapath}
Let $\beta_1, \ldots, \beta_l \in \Phi$ be roots such that $(\beta_i, \beta_j)=1$ for $i \ne j$.
Let $\gamma_1 = \beta_1$, $\gamma_i = \beta_i - \beta_{i-1}$ for $2 \le i \le l$.
Then $(\gamma_i, \gamma_{i+1})=-1$ for $1 \le i \le l-1$ and 
$(\gamma_i, \gamma_j)=0$ for $1 \le i,j \le l$, $i \le j-2$.
\end{lemma}
\begin{proof}
Direct calculation (separately for $i=1$ and for $i > 1$).
\end{proof}
\begin{lemma}\label{simpleclusterpath}
Let $w \in W$ and $i \in \{1, \ldots, r\}$ be such that $(\Phi^+ \cap w \Phi^-, le_i)$
is a simple cluster ($l = \ell(w)$).
Suppose we have written $\Phi^+ \cap w \Phi^-=\{ \beta_1, \ldots, \beta_l \}$, where 
$\beta_1 \prec \ldots \prec \beta_l$. Then there exists a 
simple path 
$i_1=i, i_2, \ldots, i_l$ in the Dynkin diagram such that $\beta_j = \alpha_{i_1} + \ldots + \alpha_{i_j}$
for all $j \in \{1,\ldots, l\}$.
\end{lemma}
\begin{proof}
If $l=0$, everything is clear (we just take the empty path). Suppose $l > 0$.

Denote $\gamma_1 = \beta_1$, $\gamma_j = \beta_j - \beta_{j-1}$ for $2 \le j \le l$.
By the definition of a simple cluster and by Lemma \ref{sumexists}, we have $\gamma_j \in \Phi^+$ for $1 \le j \le l$.

Let us check that $\gamma_j \in \Pi$ for $1 \le j \le l$. Assume the contrary:
assume that for some $j$, we have $\gamma_j = \gamma' + \gamma''$ for some $\gamma', \gamma'' \in \Phi^+$. 
Note that $\gamma', \gamma'' \prec \gamma_j$.
If $j=1$, we get a contradiction right away: we cannot have $\gamma' \in w \Phi^-$ or $\gamma'' \in w \Phi^-$ since
we are assuming that $\gamma_1 = \beta_1$ is the $\prec$-minimal element of $\Phi^+ \cap w \Phi^-$, 
and $\gamma', \gamma'' \in w \Phi^+$ would imply $\beta_1 = \gamma' + \gamma'' \in w \Phi^+$.

Suppose $j > 1$. We have $(\beta_{j-1},\beta_j)=1$ by the definition of a simple cluster, so $(\beta_{j-1},\gamma_j)=-1$.
We can write
$(\beta_{j-1},\gamma'+\gamma'')=-1$ and $\{(\beta_{j-1},\gamma'), (\beta_{j-1},\gamma'')\}=\{0.-1\}$.
Without loss of generality, suppose $(\beta_{j-1},\gamma')=-1$ and $(\beta_{j-1},\gamma'')=0$.
By Lemma \ref{sumexists}, $\beta_{j-1}+\gamma' \in \Phi^+$. 
We cannot have $\beta_{j-1}+\gamma' \in w \Phi^-$ since 
$\beta_{j-1} \prec \beta_{j-1}+\gamma' \prec \beta_j$, and $\beta_{j-1}$ and $\beta_j$ are two consecutive 
elements of the linearly $\prec$-ordered set 
$\Phi^+ \cap w \Phi^-$. So, $\beta_{j-1}+\gamma' \in w \Phi^+$.

Also, $\alpha_i^* (\beta_{j-1})=\alpha_i^*(\beta_j)=1$ by the definition of a simple cluster, 
so $\alpha_i^*(\gamma_j)=0$, and $\alpha_i^*(\gamma'')=0$ since $\gamma'' \prec \gamma_j$.
So, $\gamma'' \notin \Phi^+ \cap w \Phi^-$, hence $\gamma'' \in w \Phi^+$.
But then $\beta_j = \beta_{j-1}+\gamma' + \gamma'' \in w \Phi^+$, a contradiction.

So, all $\gamma_j$s are simple roots, and we can choose $i_1, \ldots, i_l$ so that $\alpha_{i_j}=\gamma_j$.
Then $\beta_j = \alpha_{i_1}+\ldots+\alpha_{i_j}$ for $1 \le j \le l$.
We have $\alpha_i^*(\beta_1)=1$ by the definition of a simple cluster, and $\alpha_{i_1}=\beta_1$, so $i_1=i$.
Also, by the definition of a simple cluster, we have $(\beta_j, \beta_k)=1$ for $j \ne k$, so 
by Lemma \ref{differencesformapath}, we have $(\alpha_{i_j}, \alpha_{i_{j+1}})=-1$ for $1 \le j \le l-1$
and $(\alpha_{i_j}, \alpha_{i_k})=0$ for $1 \le j,k \le l$, $j \le k-2$.
In terms of the (simply-laced) Dynkin diagram, these values of scalar product
mean exactly that $i_1, \ldots, i_l$ is a simple path in the diagram.
\end{proof}
Now we are ready to fully describe simple clusters of the form $(\Phi^+ \cap w \Phi^-, le_i)$.
We will need the following standard fact about roots systems: if $i_1, \ldots, i_l$ is a simple path in 
the Dynkin diagram, then $\alpha_{i_1}+\ldots+\alpha_{i_l} \in \Phi^+$.
\begin{proposition}\label{simpleclusterdescription}
Let $w \in W$, $l=\ell(w)$, and let $i \in \{1,\ldots, r\}$. The following conditions are equivalent.
\begin{enumerate}
\item\label{simpleclustercoefficient1} $c_{w,le_i}=1$.
\item\label{simpleclustercluster} $(\Phi^+ \cap w \Phi^-, le_i)$ is a simple cluster.
\item\label{simpleclusterpathcluster} There exists a simple path $i_1 = i, \ldots, i_l$ in the Dynkin diagram
such that $\Phi^+ \cap w \Phi^- = \{ \alpha_{i_1}, \alpha_{i_1} + \alpha_{i_2} , \ldots, \alpha_{i_1}+...+\alpha_{i_l}\}$.
\item\label{simpleclusterpathreduced} There exists a simple path $i_1 = i, \ldots, i_l$ in the Dynkin diagram
such that $w=\sigma_{i_1} \ldots \sigma_{i_l}$.
\end{enumerate}
Moreover, the paths in conditions \ref{simpleclusterpathcluster} and \ref{simpleclusterpathreduced} are actually unique and coincide.
Also, we have the following antisimple path in the Bruhat graph:
\begin{equation*}
1_w \xrightarrow{\alpha_{i_1}} \sigma_{i_1} \xrightarrow{\alpha_{i_1}+\alpha_{i_2}} \sigma_{i_1} \sigma_{i_2} 
\xrightarrow{\alpha_{i_1}+\alpha_{i_2}+\alpha_{i_3}} \ldots 
\xrightarrow{\alpha_{i_1}+...+\alpha_{i_l}} 
\sigma_{i_1} ... \sigma_{i_l} = w.
\end{equation*}
\end{proposition}
\begin{proof}
$\ref{simpleclustercoefficient1} \Leftrightarrow \ref{simpleclustercluster}$ follows from Theorem \ref{MainTheorem}.

$\ref{simpleclustercluster} \Rightarrow \ref{simpleclusterpathcluster}$.
All scalar products between different roots in a simple cluster equal 1 by definition, so 
$\Phi^+ \cap w \Phi^-$ is a linearly ordered set.
Then condition \ref{simpleclusterpathcluster} follows from Lemma \ref{simpleclusterpath}.

$\ref{simpleclusterpathcluster} \Rightarrow \ref{simpleclustercluster}$.
The only nontrivial check is that all scalar products between different roots in $\Phi^+ \cap w \Phi^-$ equal 1.
Let $1 \le j < k \le l$. We have 
\begin{multline*}
(\alpha_{i_1}+\ldots+\alpha_{i_j}, \alpha_{i_1}+\ldots+\alpha_{i_k}) = \\
(\alpha_{i_1}+\ldots+\alpha_{i_j}, \alpha_{i_1}+\ldots+\alpha_{i_j}) + (\alpha_{i_1}+\ldots+\alpha_{i_j}, \alpha_{i_{j+1}}+\ldots+\alpha_{i_k})=
2+(-1)=1.
\end{multline*}

$\ref{simpleclusterpathreduced} \Rightarrow \ref{simpleclusterpathcluster}$.
A direct computation shows that $\sigma_{i_1} \ldots \sigma_{i_{j-1}} (\alpha_{i_j}) = \alpha_{i_1} + \ldots + \alpha_{i_j}$.
By \cite[Lemma 2.2]{bgg}, $\Phi^+ \cap w \Phi^- = \{ \alpha_{i_1}, \alpha_{i_1} + \alpha_{i_2} , \ldots, \alpha_{i_1}+...+\alpha_{i_l}\}$.

$\ref{simpleclusterpathcluster} \Rightarrow \ref{simpleclusterpathreduced}$.
Set $w'=\sigma_{i_1} \ldots \sigma_{i_l}$.
By the above calculation, 
$\Phi^+ \cap w' \Phi^- = \{ \alpha_{i_1}, \alpha_{i_1} + \alpha_{i_2} , \ldots, \alpha_{i_1}+...+\alpha_{i_l}\} = 
\Phi^+ \cap w \Phi^-$.
By Lemma \ref{phiwphidetermines}, $w'=w$.

The uniqueness of the path in condition \ref{simpleclusterpathcluster} is trivial, and 
the uniqueness of the path in condition \ref{simpleclusterpathreduced}, 
as well as the equality of the paths in conditions \ref{simpleclusterpathreduced} and \ref{simpleclusterpathcluster}
follow from the calculations in the proof of $\ref{simpleclusterpathreduced} \Rightarrow \ref{simpleclusterpathcluster}$.

Finally, suppose 
condition \ref{simpleclusterpathreduced} holds.
Denote $\beta = \alpha_{i_1}+ \ldots +\alpha_{i_l}$. We already know that 
$\sigma_{i_1} \ldots \sigma_{i_{l-1}} (\alpha_{i_l})=\beta$. So,
$w(-\alpha_{i_l})=\sigma_{i_1} \ldots \sigma_{i_{l-1}} \sigma_{i_l} (-\alpha_{i_l})=\beta$, and 
$\sigma_\beta w \xrightarrow{\beta} w$ is an antisimple edge.
Also, $\sigma_\beta w = w \sigma_{w^{-1}\beta}=w \sigma_{-\alpha_{i_l}} = w \sigma_{i_l}=\sigma_{i_1} \ldots \sigma_{i_{l-1}}$.
From here, we can get the desired antisimple path by induction on $l$.
\end{proof}

Our next goal is to describe all pairs $(w \in W, n_\bullet \in \ZZ_{\ge 0}^r)$ 
such that $(\Phi^+ \cap w \Phi^-, n_\bullet)$ is simply clusterable. 
We start with a few lemmas.
\begin{lemma}\label{phiwphicontainedj}
Let $J \subseteq \Pi$, $u \in W_J$. 
Then $\Phi^+ \cap u \Phi^- \subseteq \Phi_J$.
\end{lemma}
\begin{proof}
Assume the contrary: assume we have $\alpha \in \Phi^+ \cap u \Phi^-$, but $\alpha \notin \Phi_J$.
This means that there exists $\alpha_i \in \Pi \setminus J$ such that $\alpha_i^* (\alpha) > 0$.
But the reflections corresponding to the roots from $\Phi_J$ cannot change the coordinates corresponding to the simple roots outside $J$, 
so $\alpha_i^* (u^{-1}\alpha) = \alpha_i^* (\alpha) > 0$. 
On the other hand, $\alpha \in \Phi^+ \cap u \Phi^-$ 
means
that 
$u^{-1}\alpha \in \Phi^-$, a contradiction.
\end{proof}

\begin{lemma}\label{smallsortinggivescomplement}
Let $v, w \in W$, $I \subseteq \Pi$.
Suppose that $w v^{-1} \in W_{\Pi \setminus I}$ and $\Phi^+ \cap v \Phi^-=R_I(\Phi^+ \cap v \Phi^-)$. 
Then
$(\Phi^+ \cap w \Phi^-) \setminus R_I(\Phi^+ \cap w \Phi^-) = \Phi^+ \cap w v^{-1} \Phi^-$.
\end{lemma}
\begin{proof}
Let $\alpha \in (\Phi^+ \cap w \Phi^-) \setminus R_I(\Phi^+ \cap w \Phi^-)$. Let us check that $(w v^{-1})^{-1} \alpha \in \Phi^-$.
Assume the contrary: $v w^{-1} \alpha \in \Phi^+$. We also have $w^{-1} \alpha \in \Phi^-$, 
so $v w^{-1} \alpha \in v \Phi^-$ and $v w^{-1} \alpha \in \Phi^+ \cap v \Phi^- = R_I(\Phi^+ \cap v \Phi^-)$.
By Lemma \ref{smallsortingpreservesclusters} (\ref{restrictionmoved}), 
$\alpha = w v^{-1} v w^{-1} \alpha \in R_I(\Phi^+ \cap w \Phi^-)$,
a contradiction.
So, $(w v^{-1})^{-1} \alpha \in \Phi^-$, and $\alpha \in \Phi^+ \cap w v^{-1} \Phi^-$.

Now let $\beta \in \Phi^+ \cap w v^{-1} \Phi^-$. Then $-v w^{-1} \beta \in \Phi^+ \cap v w^{-1} \Phi^-$.
We also have $v w^{-1} = (w v^{-1})^{-1} \in W_{\Pi \setminus I}$, so by Lemma \ref{phiwphicontainedj}, 
$-v w^{-1} \beta \in \Phi_{\Pi \setminus I}$.
Then $\supp (-v w^{-1} \beta) \cap I = \varnothing$, 
and $-v w^{-1} \beta \notin R_I(\Phi^+ \cap v \Phi^-) = \Phi^+ \cap v \Phi^-$.
But $-v w^{-1} \beta \in \Phi^+$, so $-v w^{-1} \beta \notin v \Phi^-$, 
$-v w^{-1} \beta \in v \Phi^+$.
Hence, $w^{-1} \beta \in \Phi^-$, and $\beta \in \Phi^+ \cap w \Phi^-$.
Recall that $\beta \in \Phi^+ \cap w v^{-1} \Phi^-$. 
By Lemma \ref{phiwphicontainedj} again (but applied to $w v^{-1}$ this time), 
$\beta \in \Phi_{\Pi \setminus I}$, 
$\supp \beta \cap I = \varnothing$, and 
$\beta \notin R_I (\Phi^+ \cap w \Phi^-)$.
\end{proof}
\begin{corollary}\label{smallsortinggivescomplementdown}
Let $v$, $w$, and $I$ be as in Lemma \ref{smallsortinggivescomplement}.
Let $I' \subseteq \Pi$ be such that $I \subseteq I'$ and $\Phi^+ \cap w \Phi^-=R_{I'} (\Phi^+ \cap w \Phi^-)$.
Then $R_{I' \setminus I, I}(\Phi^+ \cap w \Phi^-) = \Phi^+ \cap w v^{-1} \Phi^-$.
\end{corollary}
\begin{proof}
By the definition of restriction, 
$R_{I' \setminus I, I}(\Phi^+ \cap w \Phi^-) = R_{I'} (\Phi^+ \cap w \Phi^-) \setminus R_I (\Phi^+ \cap w \Phi^-)$.
The claim follows from Lemma \ref{smallsortinggivescomplement}.
\end{proof}
\begin{corollary}\label{smallsortinggivescomplementup}
Let $v$, $w$, and $I$ be as in Lemma \ref{smallsortinggivescomplement}.
Let $I' \subseteq \Pi$ be such that $I \subseteq I'$ and $\Phi^+ \cap w v^{-1} \Phi^-=R_{I' \setminus I} (\Phi^+ \cap w v^{-1} \Phi^-)$.
Then $\Phi^+ \cap w \Phi^-=R_{I'} (\Phi^+ \cap w \Phi^-)$ and
$R_{I' \setminus I, I}(\Phi^+ \cap w \Phi^-) = \Phi^+ \cap w v^{-1} \Phi^-$.
\end{corollary}
\begin{proof}
Let $\alpha \in \Phi^+ \cap w \Phi^-$. Let us check that $\supp \alpha \cap I' \ne \varnothing$.
If $\supp \alpha \cap I \ne \varnothing$, then $\supp \alpha \cap I' \ne \varnothing$ since $I \subseteq I'$.

If $\supp \alpha \cap I = \varnothing$, then by Lemma \ref{smallsortinggivescomplement},
$\alpha \in (\Phi^+ \cap w \Phi^-) \setminus R_I(\Phi^+ \cap w \Phi^-) = 
\Phi^+ \cap w v^{-1} \Phi^- = R_{I' \setminus I} (\Phi^+ \cap w v^{-1} \Phi^-)$.
So, $\supp \alpha \cap (I' \setminus I) \ne \varnothing$, and $\supp \alpha \cap I' \ne \varnothing$.

Therefore, $\Phi^+ \cap w \Phi^-=R_{I'} (\Phi^+ \cap w \Phi^-)$, and the rest of the claim follows from 
Corollary \ref{smallsortinggivescomplementdown}.
\end{proof}

Now we are ready to describe the pairs $(w \in W, n_\bullet \in \ZZ_{\ge 0}^r)$ 
such that $(\Phi^+ \cap w \Phi^-, n_\bullet)$ is simply clusterable.
\begin{definition}
A finite sequence of simple paths in the Dynkin diagram
\begin{equation*}
i_{1,1},..., i_{1,l_1};
\ldots;
i_{m,1},..., i_{m,l_m}
\end{equation*}
($1 \le i_{j,k} \le r$)
is called \emph{tiling} if
for each $j$ ($1 \le j < m$),
the beginning of the $j$th path does not occur anywhere in the $(j+1)$th, \ldots, the $m$th path:
$i_{j,1} \ne i_{j',k}$ if $j < j' \le m$, $1 \le k \le l_{j'}$.
The \emph{total length} of such a tiling sequence is $l_1+\ldots+l_m$.
\end{definition}
\begin{theorem}\label{simplyclusterizabledescription}
Let $G$ be a semisimple split algebraic group with simply laced Dynkin diagram.
Let $J \subseteq \Pi$ and let $w \in W_J$.
Let $\alpha_{i_1}, \ldots, \alpha_{i_m}$ be an arbitrary sequence consisting of some (not necessarily all) roots from $J$ without repetitions,
let $l_1, \ldots, l_m$ be positive integers, and let $n_\bullet = l_1 e_{i_1} + \ldots + l_m e_{i_m}$.
Then the following conditions are equivalent.
\begin{enumerate}
\item\label{simplyclusterizableforsequence}
The configuration 
$(\Phi^+ \cap w \Phi^-, n_\bullet)$ 
is simply clusterable, 
and the definition of a simply clusterable configuration holds for the ordering 
$\supp n_\bullet = \{ \alpha_{i_1}, \ldots, \alpha_{i_m} \}$.
\item\label{reducedexpression}
There exists a tiling sequence of simple paths in the Dynkin diagram of the form
\begin{equation*}
j_{1,1}=i_1,..., j_{1,l_1};
\ldots;
j_{m,1}=i_m,..., j_{m,l_m}
\end{equation*}
such that $w=\sigma_{j_{m,1}}...\sigma_{j_{m,l_m}} \ldots \sigma_{j_{1,1}}...\sigma_{j_{1,l_1}}$
and all vertices in the paths belong to $J$ 
(formally: $\alpha_{j_{k,p}} \in J$ for $1 \le k \le m$, $1 \le p \le l_j$).
\end{enumerate}
If these conditions hold, then the expression for $w$ in condition \ref{reducedexpression}
is reduced, and $c_{w,n_\bullet}=1$.
\end{theorem}
\begin{proof}
$\ref{simplyclusterizableforsequence} \Rightarrow \ref{reducedexpression}$. 
Induction on $m$. If $m=0$, then everything is clear: we have $\Phi^+ \cap w \Phi^- = \varnothing$ and $w = 1_W$.

Let $m > 0$.
By Lemma \ref{phiwphicontainedj}, $\Phi^+ \cap w \Phi^- \subseteq \Phi_J$.
By the definition of a simply clusterable configuration, 
$(R_{\{\alpha_{i_1}\}} (\Phi^+ \cap w \Phi^-), p_{\{\alpha_{i_1}\}}(n_\bullet))$ 
is a simple cluster, 
in particular, $|R_{\{\alpha_{i_1}\}} (\Phi^+ \cap w \Phi^-)|=|p_{\{\alpha_{i_1}\}}(n_\bullet)|$.
Denote $J'=J \setminus \{ \alpha_{i_1} \}$.
By Lemma \ref{firstsortsmall}, there exists $v \in W$ 
such that 
$w v^{-1} \in W_{J'}$ 
and $c_{v, p_{\{\alpha_{i_1}\}}(n_\bullet)} > 0$.

We can rewrite $c_{v, p_{\{\alpha_{i_1}\}}(n_\bullet)} > 0$ as $c_{v, l_1 e_{i_1}} > 0$.
By Proposition \ref{sortabilitycriterium} (\ref{excessiveconfiguration}), 
$(\Phi^+ \cap v \Phi^-, l_1 e_{i_1})$ is 
an 
excessive configuration, 
in particular, $R_{\{\alpha_{i_1}\}} (\Phi^+ \cap v \Phi^-) = \Phi^+ \cap v \Phi^-$.
By Lemma \ref{smallsortingpreservesclusters} (\ref{clusteriff}), 
$(\Phi^+ \cap v \Phi^-, l_1 e_{i_1})$ is a (clearly, simple) cluster. 
By Proposition \ref{simpleclusterdescription}, 
there exists a simple path $j_{1,1}=i_1, \ldots, j_{1,l_1}$ in the Dynkin diagram such that 
$v=\sigma_{j_{1,1}} \ldots \sigma_{j_{1,l_1}}$ and 
$\beta:= \alpha_{j_{1,1}} + \ldots + \alpha_{j_{1,l_1}} \in \Phi^+ \cap v \Phi^-$.
We have $w \in W_J$ and 
$w v^{-1} \in W_{J'}$, 
so $v \in W_J$. 
By Lemma \ref{phiwphicontainedj}, $\beta \in \Phi_J$, 
in other words, $\alpha_{j_{1, p}} \in J$ for $1 \le p \le l_1$.

By Lemma \ref{clusterizableinductive} 
($\ref{clusterizableinductiveclusterizable} \Rightarrow \ref{clusterizableinductivesingle}$ and the ``more precisely'' part), 
$(R_{\supp n_\bullet \setminus \{\alpha_{i_1}\}, \{ \alpha_{i_1} \}} (\Phi^+ \cap w \Phi^-), 
p_{\supp n_\bullet \setminus \{\alpha_{i_1}\}}(n_\bullet))$
is a simply clusterable configuration.
By the definition of a clusterable configuration, $R_{\supp n_\bullet} (\Phi^+ \cap w \Phi^-)=\Phi^+ \cap w \Phi^-$.
By Corollary \ref{smallsortinggivescomplementdown} (applied to $v$, $w$, $\{\alpha_{i_1}\}$, 
and $\supp n_\bullet$), 
$R_{\supp n_\bullet \setminus \{\alpha_{i_1}\}, \{ \alpha_{i_1} \}} (\Phi^+ \cap w \Phi^-) = \Phi^+ \cap w v^{-1} \Phi^-$.

Summarizing, 
$(\Phi^+ \cap w v^{-1} \Phi^-, l_2 e_{i_2} + \ldots + l_m e_{i_m})$
is a simply clusterable configuration 
and $w v^{-1} \in W_{J'}$. 
By the induction hypothesis, 
there exists a tiling sequence of simple paths in the Dynkin diagram of the form
$j_{2,1}=i_2,..., j_{2,l_2};
\ldots;
j_{m,1}=i_m,..., j_{m,l_m}$
such that $w v^{-1}=\sigma_{j_{m,1}}...\sigma_{j_{m,l_m}} \ldots \sigma_{j_{2,1}}...\sigma_{j_{2,l_2}}$
and all vertices in the paths belong to $J'$.
Since all these vertices belong to $J' = J \setminus \{ \alpha_{i_1} \}$, 
the sequence of paths 
$j_{1,1}=i_1, \ldots, j_{1,l_1};
j_{2,1},..., j_{2,l_2};
\ldots;
j_{m,1},..., j_{m,l_m}$
is also tiling.
And $w = w v^{-1} v = 
\sigma_{j_{m,1}}...\sigma_{j_{m,l_m}} \ldots \sigma_{j_{2,1}}...\sigma_{j_{2,l_2}} \sigma_{j_{1,1}} ... \sigma_{j_{1,l_1}}$, 
which finishes the proof of $\ref{simplyclusterizableforsequence} \Rightarrow \ref{reducedexpression}$.

$\ref{reducedexpression} \Rightarrow \ref{simplyclusterizableforsequence}$. 
We again do induction on $m$. Again, if $m=0$, everything is clear.

Let $m > 0$.
Denote $v=\sigma_{j_{1,1}} \ldots \sigma_{j_{1,l_1}}$.
By Proposition \ref{simpleclusterdescription}, 
$(\Phi^+ \cap v \Phi^-, l_1 e_{i_1})$ is a simple cluster, 
in particular, $R_{\{ \alpha_{i_1} \}} (\Phi^+ \cap v \Phi^-) = \Phi^+ \cap v \Phi^-$.
Denote $J' = J \setminus \{ \alpha_{i_1} \}$.
Since the sequence of paths is tiling, we have 
$w v^{-1} = \sigma_{j_{m,1}}...\sigma_{j_{m,l_m}} \ldots \sigma_{j_{2,1}}...\sigma_{j_{2,l_2}} \in W_{J'}$.
By Lemma \ref{smallsortingpreservesclusters} (\ref{clusteriff}), 
$(R_{\{ \alpha_{i_1} \}} (\Phi^+ \cap w \Phi^-), l_1 e_{i_1})$ is a (clearly, simple) cluster.

By the induction hypothesis, the configuration 
$(\Phi^+ \cap w v^{-1} \Phi^-, l_2 e_{i_2} + \ldots + l_m e_{i_m})$ satisfies
the definition of a simply clusterable configuration 
for the ordering
$\supp (l_2 e_{i_2} + \ldots + l_m e_{i_m}) = \{ \alpha_{i_2}, \ldots, \alpha_{i_m} \}$.
In particular, $R_{ \{ \alpha_{i_2}, \ldots, \alpha_{i_m} \} } (\Phi^+ \cap w v^{-1} \Phi^-) = \Phi^+ \cap w v^{-1} \Phi^-$.
By Corollary \ref{smallsortinggivescomplementup}
(again applied to $v$, $w$, $\{\alpha_{i_1}\}$, 
and $\supp n_\bullet$), 
we have $\Phi^+ \cap w \Phi^- = R_{\supp n_\bullet}(\Phi^+ \cap w \Phi^-)$ and
$\Phi^+ \cap w v^{-1} \Phi^- = R_{\supp n_\bullet \setminus \{ \alpha_{i_1} \}, \{ \alpha_{i_1} \}} (\Phi^+ \cap w \Phi^-)$.
Now, condition \ref{simplyclusterizableforsequence} follows from 
Lemma \ref{clusterizableinductive} (formally, 
$\ref{clusterizableinductivemultiple} \Rightarrow \ref{clusterizableinductiveclusterizable}$ and the ``more precisely'' part).

Finally, by Corollary \ref{clusterizableequal}, we have $\ell(w)=|n_\bullet|=l_1+\ldots+l_m$, so the expression
for $w$ in condition \ref{reducedexpression} is indeed reduced.
And $c_{w, n_\bullet}=1$ follows from Theorem \ref{MainTheorem}.
\end{proof}
This theorem looks mostly useful and motivated for $J= \Pi$, but an arbitrary $J \subseteq \Pi$ was necessary 
to make the induction work and to prove the tiling property.
\begin{corollary}\label{tilingsequencemultiplicityfree}
The maximal degree of a multiplicity-free monomial in Schubert divisors equals the maximal total length of a 
tiling sequence of simple paths in the Dynkin diagram.

If
\begin{equation*}
i_{1,1},..., i_{1,l_1};
\ldots;
i_{m,1},..., i_{m,l_m}
\end{equation*}
is a tiling sequence of paths (of total length $l_1+\ldots+l_m$), then 
$D_{i_{1,1}}^{l_1} \ldots D_{i_{m,1}}^{l_m}$
is a multiplicity-free monomial (of degree $l_1+\ldots+l_m$).
\end{corollary}
\begin{proof}
The first claim follows from Theorem \ref{MainTheorem}, Lemma \ref{excessiveclusterisableissimple}, 
and Theorem \ref{simplyclusterizabledescription}. The second claim follows from Theorem \ref{simplyclusterizabledescription}.
\end{proof}
Unfortunately, this corollary may not describe all multiplicity-free monomials. 
There may exist more multiplicity-free monomials arising from clusterable, but not 
simply clusterable configurations. However, their degrees are the same as the degrees of 
monomials in this corollary by Lemma \ref{excessiveclusterisableissimple}.
Still, the question of a direct characterization of all multiplicity-free monomials
(in other words, of the projection of 
all pairs $(w \in W, n_\bullet \in \ZZ_{\ge 0})$ such that $c_{w,n_\bullet}=1$, like in Theorem \ref{MainTheorem}, to the second coordinate)
remains open.

\section{Numerical estimates}\label{sectionnumerical}
In this section, we will find all possible total lengths of the tiling sequences of paths in the Dynkin diagram
and all possible
degrees of multiplicity-free monomials, depending on the type of $G$.
First, let us reduce the question about semisimple groups to the question about simple groups.
\begin{lemma}\label{answermanytypes}
Let $G$ be a semisimple split algebraic group, $G=(G_1 \times \ldots \times G_k)/H$,
where $G_i$s are simple split groups, and $H$ is a finite central subgroup of the product.
If $d_i$ is the maximal degree of a multiplicity-free product of Schubert divisors corresponding 
to $G_i$ ($1 \le i \le k$), then 
the degrees of all multiplicity-free products of Schubert divisors corresponding to $G$
are all integers between 0 and $d_1+\ldots+d_k$, inclusively.
\end{lemma}
\begin{proof}
If $B_i$ is a Borel subgroup of $G_i$ ($1 \le i \le k$), 
then $G/B \cong G_1/B_1 \times \ldots \times G_k/B_k$, 
and $\CH^*(G/B) \cong \CH^*(G_1/B_1) \otimes_{\ZZ} \ldots \otimes_{\ZZ} \CH^*(G_k/B_k)$.
Hence, a monomial in Schubert divisors in $\CH^*(G/B)$ is multiplicity-free 
if and only if it is a product of 
multiplicity-free monomials in Schubert divisors in all $\CH^*(G_i/B_i)$s.


For the relation between the maximal degree and all degrees, see Remark \ref{shortenpath}.
\end{proof}
This lemma is formulated for groups of arbitrary type, but our previous analysis works 
for groups with simply laced Dynkin diagrams only, so we will get our final answers for 
simple groups of type $A$, $D$, and $E$ only. 
We can combine them into numerical answers for non-simple groups 
using Lemma \ref{answermanytypes}, but only if the whole (disconnected) Dynkin
diagram is simply laced.
So, now we again assume that the Dynkin diagram is simply laced.
\begin{remark}\label{througha}
If there exists a simple path in the Dynkin diagram that passes through all vertices, 
then this path makes this Dynkin diagram of type $A_r$.
\end{remark}

\begin{lemma}\label{multipathalwaysabound}
The total length of a tiling sequence is always $\le r(r+1)/2$. 
If the Dynkin diagram (connected or not) is not of of type $A_r$, then the inequality is strict.
\end{lemma}
\begin{proof}
Let 
\begin{equation*}
i_{1,1},..., i_{1,l_1},
\ldots,
i_{m,1},..., i_{m,l_m}
\end{equation*}
be a tiling sequence. Its total length is 
$l_1+\ldots+l_m$.
By definition,
for each $j$, $1 < j\le k$, 
the vertices $i_{1,1},\ldots,i_{j-1,1}$
do not appear among 
$i_{j,1},\ldots,i_{j,l_j}$.
So, $l_j \le r-(j-1)$.
\kpar
Hence, 
\begin{equation*}
l_1+\ldots+l_m
\le
r+(r-1)+\ldots+r-m+1
\le 
r+(r-1)+\ldots+1=r(r+1)/2.
\end{equation*}

This inequality becomes the equality 
$l_1+\ldots+l_m
=r(r+1)/2$
only if $m=r$ and $l_j=r-(j-1)$ for all $j$ ($1\le j\le m$).
\kpar
In particular, if 
$l_1+\ldots+l_m
=r(r+1)/2$,
then $l_1=r$. By Remark \ref{througha}, this is possible only if the Dynkin diagram is of type $A_r$, 
otherwise 
$l_1+\ldots+l_m
<r(r+1)/2$.
\end{proof}

\begin{lemma}\label{answertypea}
If $G$ is a simple split 
group of type $A_r$, then 
all possible degrees of multiplicity free monomials $D_1^{n_1}\ldots D_r^{n_r}$
are all integers between $0$ and $r(r+1)/2$ (inclusively).

(Note that if $G$ is of type $A_r$, then $\dim (G/B) = r(r+1)/2$. So, this lemma says in other words that 
multiplicity free monomials in type $A_r$ exist in all graded components of $\CH^* (G/B)$.)
\end{lemma}
\begin{proof}
By Remark \ref{shortenpath}, it suffices to show that 
the maximal possible degree of a multiplicity free monomial is exactly $r(r+1)/2$.
By Corollary \ref{tilingsequencemultiplicityfree}, it is enough to show that 
the maximal possible total length of a tiling sequence of simple paths in the Dynkin diagram 
is exactly $r(r+1)/2$.
Finally, by Lemma \ref{multipathalwaysabound}, it is enough to construct a tiling sequence of total length $r(r+1)/2$.
And it is easy to construct 
such a tiling sequence:
\begin{equation*}
1,...,r;
2,...,r;
\ldots;
r.
\end{equation*}
It gives the following multiplicity-free monomial: $D_1^r D_2^{r-1}\ldots D_r$.
\end{proof}

\begin{proposition}\label{answertyped}
If $G$ is a simple split 
group of type $D_r$ $(r\ge 4)$, then 
all possible degrees of multiplicity free monomials $D_1^{n_1}\ldots D_r^{n_r}$
are all integers between $0$ and $r(r+1)/2 - 1$ (inclusively).
\end{proposition}
\begin{proof}
Like in the proof of Lemma \ref{answertypea}, 
by Remark \ref{shortenpath}, by Corollary \ref{tilingsequencemultiplicityfree}, 
and by Lemma \ref{multipathalwaysabound}, 
it suffices to find a tiling sequence of simple paths of total length $r(r+1)/2 - 1$.
And it is easy to construct such a sequence:
\begin{equation*}
r, r-2, r-3, ...,1;
r-1,r-2,...,1
r-2,...,1;
\ldots;
1.
\end{equation*}
The total length is indeed 
$(r-1)+(r-1)+(r-2)+\ldots+1=(r+\ldots+1)-1=r(r+1)/2-1$,
and we get the following multiplicity-free monomial: 
$D_r^{r-1}D_{r-1}^{r-1}D_{r-2}^{r-2}D_{r-3}^{r-3}\ldots D_1$.
\end{proof}

\begin{theorem}\label{answertypee}
If $G$ is a simple split 
group of type $E_r$ $(6\le r\le 8)$, then 
all possible degrees of multiplicity free monomials $D_1^{n_1}\ldots D_r^{n_r}$
are all integers between $0$ and $r(r+1)/2 - 2$ (inclusively).
\kpar
In other words, the maximal degree
\kpar
\kcomment{\begin{enumerate}}
\kcomment{\item[]}
for $E_6$ is 19,

\kcomment{\item[]}
for $E_7$ is 26,

\kcomment{\item[]}
for $E_8$ is 34.
\kcomment{\end{enumerate}}
\end{theorem}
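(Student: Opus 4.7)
By Theorem \ref{GeneralEstimateThm} with $I = \Pi$, the asserted maximum degree equals the maximum total length of a multipath in the Dynkin diagram of type $E_r$, so it suffices to show that this maximum equals $r(r+1)/2 - 2$ for $6 \le r \le 8$. The plan is to argue the lower bound by explicit construction and the upper bound by induction on $r$ together with case analysis on the first vertex of the multipath.

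For the lower bound, using Bourbaki numbering (vertices $1,2,\ldots,r$, chain $1{-}3{-}4{-}\cdots{-}r$, and branch $2{-}4$), I would exhibit the multipath
$$
(2,4,5,\ldots,r);\ (1,3,4,\ldots,r);\ (3,4,5,\ldots,r);\ (4,5,\ldots,r);\ \ldots;\ (r).
$$
The first path contributes $r-2$ vertices, while the remaining $r-1$ paths form exactly the standard $A_{r-1}$-multipath on $\{1,3,4,\ldots,r\}$ from the proof of Lemma \ref{answertypea}, contributing $(r-1)r/2$. The starting vertices $2,1,3,4,\ldots,r$ are distinct, and a direct inspection shows that none of them reappears in any subsequent path, so this is a valid multipath of total length $(r-2) + (r-1)r/2 = r(r+1)/2 - 2$. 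Proposition \ref{MultipathIsMultiplicityFree} then produces a multiplicity-free monomial of the required degree.

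For the upper bound I would induct on $r$, doing case analysis on the first vertex $j_{1,1}$: bound $m_1$ by the longest simple path in $E_r$ starting at $j_{1,1}$, and bound $m_2 + \cdots + m_k$ by the maximum total multipath length in the (possibly disconnected) complement $E_r \setminus \{j_{1,1}\}$, using that this maximum is additive over connected components since every simple path sits inside a single component. Two cases are tight: if $j_{1,1} = 2$, the longest simple path from $2$ has only $r-2$ vertices (it must pass through $4$ and then continue either along $5,\ldots,r$ or via $3,1$), while removing $2$ yields $A_{r-1}$ with bound $(r-1)r/2$ by Lemma \ref{answertypea}; if $j_{1,1} = 1$, then $m_1 \le r-1$ via $1,3,\ldots,r$ and removing $1$ yields $D_{r-1}$ with bound $(r-1)r/2 - 1$ by Proposition \ref{answertyped}. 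Both sum to $r(r+1)/2 - 2$. For $j_{1,1} = r$, the complement is $D_{r-1}$, $E_6$, or $E_7$ according to $r \in \{6,7,8\}$, and the inductive hypothesis combined with $m_1 \le r-1$ gives a total $\le r(r+1)/2 - 2$, strictly less for $r = 7, 8$. For each interior $j_{1,1} \in \{3,4,\ldots,r-1\}$, the complement decomposes into smaller $A$-, $D$-, or $E$-pieces, and the longest simple path from $j_{1,1}$ is short enough to leave substantial slack.

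The main technical obstacle is not the ideas but the bookkeeping: for each of the $r$ possible values of $j_{1,1}$ I must identify precisely both the structure of the complement and the longest simple path starting there. The organizing observation that keeps this manageable is that $E_r \setminus \{j\}$ is of type $A_{r-1}$ only for $j = 2$, and from every other vertex the maximum simple-path length is strictly smaller than the $r-1$ needed to saturate the weaker estimate $m_1 + (r-1)r/2 \le r(r+1)/2 - 2 + 1$; combining this comparison with Lemmas \ref{answertypea} and \ref{multipathalwaysabound}, Proposition \ref{answertyped}, and the inductive hypothesis forces the bound in every remaining case.
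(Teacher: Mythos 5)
Your lower-bound construction is the same as the paper's, and your reduction to bounding multipath lengths via Theorem \ref{GeneralEstimateThm} is also the same, but the upper-bound argument you sketch is more convoluted than necessary. The paper handles the case $j_{1,1} \ne 2$ in a single stroke: since $E_r \setminus \{j_{1,1}\}$ is never of type $A_{r-1}$ when $j_{1,1} \ne 2$, Lemma \ref{multipathalwaysabound} immediately gives $m_2 + \cdots + m_k \le (r-1)r/2 - 1$, and combined with the trivial bound $m_1 \le r-1$ this yields $m_1 + \cdots + m_k \le r(r+1)/2 - 2$ with no induction on $r$, no appeal to Proposition \ref{answertyped}, and no sub-casing on whether $j_{1,1}$ is $1$, $r$, or an interior vertex. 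Your route via induction and separate treatment of $j_{1,1} \in \{1,r\}$ does arrive at the correct bound in each case, but it is doing the same arithmetic several times over and pulling in heavier machinery than the statement needs. One genuine inaccuracy in your write-up: the closing ``organizing observation'' that ``from every other vertex the maximum simple-path length is strictly smaller than $r-1$'' is false --- the paths $1,3,4,\ldots,r$ and $r,r-1,\ldots,4,3,1$ both have exactly $r-1$ vertices, so from vertices $1$ and $r$ the maximum is $r-1$, not less. This does not break your proof, since you treated $j_{1,1}=1$ and $j_{1,1}=r$ by separate subcases that don't rely on that claim, but the observation as stated is not correct and should not be presented as the organizing principle; the actual organizing principle is the one the paper uses, namely that the complement fails to be of type $A_{r-1}$, so Lemma \ref{multipathalwaysabound} costs you one unit in the tail.
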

\begin{proof}
Like in the two previous proofs, 
by Remark \ref{shortenpath} and by Corollary \ref{tilingsequencemultiplicityfree},
it suffices to show that 
the maximal possible total length of a tiling sequence is exactly $r(r+1)/2 - 2$.
(In this case, the estimate $\le r(r+1)/2 -1$ from Lemma \ref{multipathalwaysabound} turns out to be not enough.)

Suppose we have a nonempty tiling sequence
\begin{equation*}
i_{1,1},..., i_{1,l_1},
\ldots,
i_{m,1},..., i_{m,l_m}
\end{equation*}
(of total length $l_1+\ldots+l_m$).
By the definition of a tiling sequence, 
\begin{equation*}
i_{2,1},..., i_{2,l_2},
\ldots,
i_{m,1},..., i_{m,l_m}
\end{equation*}
is a (possibly empty) tiling sequence of simple paths, and the vertex $i_{1,1}$ never occurs in these paths. 
In other words, if we denote the original 
Dynkin diagram by $\Xi$, then these paths are in fact in the Dynkin diagram $\Xi\setminus \{i_{1,1}\}$.
The total length of this sequence is $l_2+\ldots+l_m$.
Let us consider 2 cases (recall that we enumerate the vertices as in \cite{bou}):

\emph{Case 1. $i_{1,1}=2$.}
Then $\Xi\setminus \{i_{1,1}\}$ is a diagram of type $A_{r-1}$.
By Lemma \ref{answertypea},
$l_2+\ldots+l_m \le (r-1)r/2$.
A direct observation of Dynkin diagrams of types $E_6$, $E_7$, and $E_8$
shows that 
the maximal length of a path in $\Xi$ starting at the vertex number 2 is always $r-2$, so 
$l_1\le r-2$, 
and 
$l_1+\ldots+l_m\le r-2+(r-1)r/2=r(r+1)/2-2$.

\textit{Case 2. $i_{1,1}\ne 2$.}
Then 
a direct observation of Dynkin diagrams of types $E_6$, $E_7$, and $E_8$
shows that 
$\Xi\setminus \{i_{1,1}\}$ is not of type $A_{r-1}$.
(More precisely, it can be 
either of types $D$ or $E$ if $i_{1,1}$ is 1 or $r$,
or not connected if $i_{1,1}\ne 1$ and $i_{1,1}\ne r$.)
By Lemma \ref{multipathalwaysabound}, 
$l_2+\ldots+l_m \le (r-1)r/2-1$.
\kpar
By Remark \ref{througha} applied to the path $i_{1,1},\ldots, i_{1,l_1}$,
we have 
$l_1\le r-1$.
Therefore, again
$l_1+\ldots+l_m \le r-1+(r-1)r/2-1=r(r+1)/2-2$.

Finally, it is easy to construct a 
tiling sequence
of total length $r(r+1)/2-2$:
\begin{equation*}
2,4,5,...,r;
1,3,4,5,...,r;
3,4,5,...,r;
\ldots;
r.
\end{equation*}
The total length is indeed 
$(r-2)+(r-1)+(r-2)+\ldots+1=(r+\ldots+1)-2=r(r+1)/2-2$, 
and we have the following multiplicity-free monomial:
$D_2^{r-2}D_1^{r-1}D_3^{r-2}D_4^{r-3}\ldots D_r$.
\end{proof}

%



\begin{thebibliography}{19}
\bibitem{bgg} J.~H.~Bernstein, I.~M.~Gel'fand, S.~I.~Gel'fand, \textit{Schubert cells and cohomology of the spaces $G/P$},
Russian Math.\ Surveys \textbf{28}:3 (1973), 1--26.
\bibitem{borel} A.~Borel, \textit{Sur la cohomologie des espaces fibre principaux et des espaces homogenes des groupes de Lie compact}, 
Ann.\ Math.\ \textbf{57} (1953), 11--207.
\bibitem{bou} N.~Bourbaki, \textit{Groupes et alg\'ebres de Lie, Chaps.\ 4, 5 et 6}, Hermann, Paris, 1975.
\bibitem{brionarxiv} M.~Brion, \textit{Lectures on the Geometry of Flag Varieties}, preprint arXiv:math/0410240v1, 9 Oct 2004.
\bibitem{chevalley} C.~Chevalley, \textit{Sur les D\'ecompositions Cellulaires des Espaces $G/B$}, in:
W.~J.~Haboush, B.~J.~Parshall (Eds.), \textit{Algebraic Groups and Their Generalizations: Classical Methods}, 
Proceedings of Symposia in Pure Mathematics
\textbf{56}:1, 
AMS, Providence, Rhode Island, 1994, 1--23.
\bibitem{demazure} M.~Demazure, \textit{D\'esingularisation des vari\'et\'es de Schubert g\'en\'eralis\'ees}, 
Annales scientifiques de l'\'E.\ N.\ S.\ $4^{e}$ s\'erie, \textbf{7}:1 (1974), 53--88.
\bibitem{multiplicityfreeshortnote} R.~Devyatov, \textit{Multiplicity-Free Products of Schubert Divisors}, 
Mathematical Notes \textbf{119}:4, 624--628.
\bibitem{grothendiecktorsion} A.~Grothendieck, \textit{Torsion homologique et sections rationnelles}, 
S\'eminaire Claude Chevalley \textbf{3}:5 (1958), 1--29.
\bibitem{humps} J.~Humphreys, \textit{Introduction to Lie algebras and representation theory}, GTM \textbf{9}, 
Springer-Verlag, New York, 1972.
\bibitem{karpenko} N.~Karpenko, \textit{A bound for canonical dimension of the (semi)spinor groups},
Duke Math.\ J.\ \textbf{133}:2 (2006), 391--404.
\bibitem{merkurjevupd3} A.~S.~Merkurjev, \textit{Essential dimension: a survey},
Transformation Groups \textbf{18}:2 (2013), 415--481.
\bibitem{poststan} A.~Postnikov, R.~P.~Stanley, \textit{Chains in the Bruhat order}, J.\ Algebr.\ Comb.\ \textbf{29} (2009), 133--174.
\bibitem{TotaroE} B.~Totaro, \textit{The torsion index of $E_8$ and other groups}, 
Duke Math.\ J.\ \textbf{129} (2005), 219--248.
\bibitem{TotaroSpin} B.~Totaro, \textit{The torsion index of the spin groups}, 
Duke Math.\ J.\ \textbf{129} (2005), 249--290.
\bibitem{zaynullinbcfg} K.~Zainoulline, \textit{The Canonical Dimension of a Semisimple Group and the Unimodular Degree of a Root System}, 
International Mathematics Research Notices \textbf{2023}:4 (2023), 2834--2845.
\end{thebibliography}
\end{document}